\theoremstyle{plain}
\newtheorem{theorem}{Theorem}[section]
\newtheorem{lemma}[theorem]{Lemma}
\newtheorem{corollary}[theorem]{Corollary}
\newtheorem{conjecture}[theorem]{Conjecture}
\newtheorem*{theorem*}{Theorem}
\newtheorem*{claim*}{Claim}
\theoremstyle{definition}
\newtheorem{example}[theorem]{Example}
\newtheorem{remark}[theorem]{Remark}
\newcommand{\F}{\mathcal{F}}
\newcommand{\R}{\mathbb{R}}
\newcommand{\Z}{\mathbb{Z}}
\newcommand{\pt}{\mathrm{pt}}
\newcommand{\conv}{\mathrm{conv}}
\newcommand{\KG}{\mathrm{KG}}
\renewcommand{\F}{\mathcal{F}}
\newcommand{\cd}{\mathrm{cd}}
\newcommand{\tcd}{\mathrm{tcd}}
\begin{document}

\title[Chromatic numbers of stable Kneser hypergraphs]{Chromatic numbers of stable Kneser hypergraphs via topological Tverberg-type theorems}



\author{Florian Frick}
\address{Mathematical Sciences Research Institute, 17 Gauss Way, Berkeley, CA 94720, USA}
\address{Dept.\ Math., Cornell University, Ithaca, NY 14853, USA}
\email{ff238@cornell.edu}

\date{\today}
\maketitle


\begin{abstract}
\small
Kneser's 1955 conjecture -- proven by Lov\'asz in 1978 -- asserts that in any partition of the
$k$-subsets of $\{1, 2, \dots, n\}$ into $n-2k-3$ parts, one part contains two disjoint sets. 
Schrijver showed that one can restrict to significantly fewer $k$-sets and still observe the 
same intersection pattern. Alon, Frankl, and Lov\'asz proved a different generalization of
Kneser's conjecture for $r$ pairwise disjoint sets. Dolnikov generalized Lov\'asz' result to arbitrary 
set systems, while K\v r\' i\v z did the same for the $r$-fold extension of Kneser's conjecture.
Here we prove a common generalization of all of these results. Moreover, we prove additional 
strengthenings by determining the chromatic number of certain sparse stable Kneser hypergraphs, 
and further develop a general approach to establishing lower bounds for chromatic numbers 
of hypergraphs using a combination of methods from equivariant topology and intersection 
results for convex hulls of points in Euclidean space. 
\end{abstract}

\section{Introduction}

Splitting $\binom{[n]}{k}$, the set of $k$-element subsets of $[n] = \{1, 2, \dots, n\}$, into $n-2k-3$ parts,
one of the parts must contain two disjoint $k$-sets. This was conjectured by Kneser~\cite{kneser1955} and proven by Lov\'asz~\cite{lovasz1978}.
This statement about the intersection pattern of all $k$-element subsets of $[n]$ easily translates into
a graph coloring problem: construct the \emph{Kneser graph} $\KG(k,n)$ on vertex set $\binom{[n]}{k}$
with an edge between any two vertices that correspond to disjoint $k$-sets. Kneser's conjecture then 
says that the chromatic number $\chi(\KG(k,n))$, that is, the least number of colors needed to color the
vertices such that any edge has endpoints of distinct colors, is at least~${n -2(k-1)}$. A greedy coloring 
shows that this lower bound is optimal. 

This has been generalized in several different ways. \emph{Firstly}, Schrijver~\cite{schrijver1978} showed that if one restricts the vertex set
of~$\KG(k,n)$ to \emph{stable} $k$-element sets, that is, those $k$-sets $\sigma \subset [n]$ that do not
contain two successive elements in cyclic order, the chromatic number does not decrease. We denote the
induced subgraph of stable $k$-sets by~$\KG(k,n)_{2-\textrm{stab}}$. \emph{Secondly}, Alon, Frankl, and Lov\'asz~\cite{alon1986} showed
that in any partition of $\binom{[n]}{k}$ into $\lceil \frac{n-r(k-1)}{r-1} \rceil-1$ parts, there is one part that
contains $r$ pairwise disjoint $k$-sets, proving a conjecture of Erd\H os~\cite{erdos1973}. In the same way that Kneser's
question translates into a graph coloring problem Alon, Frankl, and Lov\'asz's result establishes a lower
bound for the chromatic number of an associated hypergraph: For an integer $r \ge 2$ denote by $\KG^r(k,n)$ 
the \emph{Kneser hypergraph} with vertex set $\binom{[n]}{k}$ and a hyperedge of cardinality $r$ for
any $r$ sets $\sigma_1, \dots, \sigma_r \subset [n]$ of cardinality~$k$ that are pairwise disjoint. The 
chromatic number $\chi(H)$ of a hypergraph $H$ is the least number of colors needed to color its vertices 
such that no hyperedge is monochromatic, i.e., receives only one color. In this language Alon, Frankl, and Lov\'asz showed
that $\chi(\KG^r(k,n)) = \lceil \frac{n-r(k-1)}{r-1} \rceil$. \emph{Thirdly}, Dolnikov~\cite{dolnikov1988} gave general lower bounds for 
the chromatic number of graphs that arise from arbitrary set systems with vertices corresponding to sets and 
edges to disjoint sets. His bound specializes to Kneser's conjecture for the system of $k$-subsets of~$[n]$.
K\v r\' i\v z~\cite{kriz1992} extended this to the hypergraph setting, establishing bounds for chromatic numbers that specialize to
the result of Alon, Frankl, and Lov\'asz. For a system $\F$ of subsets of~$[n]$ K\v r\' i\v z defines a combinatorial
quantity, the \emph{$r$-colorability defect} $\cd^r(\F)$, and shows
that in general $(r-1)\chi(\KG^r(\F)) \ge \cd^r(\F)$; see Section~\ref{sec:nonlocal} for definitions.

Since then some effort has been invested into combining the first two generalizations into one common
generalization of Kneser's conjecture. Meunier~\cite{meunier2011} showed that the chromatic number of $\KG^r(k,n)$ does
not decrease if one restricts to those vertices that correspond to \emph{almost stable} $k$-sets, where a
set $\sigma \subset [n]$ is almost stable if it does not contain two successive elements of~$[n]$ but may 
contain $1$ and~$n$. Alishahi and Hajiabolhassan~\cite{alishahi2015} showed that $\KG^r(k,n)_{2-\textrm{stab}}$, the induced 
subhypergraph of $\KG^r(k,n)$ that only contains those vertices corresponding to stable $k$-sets, 
still satisfies $\chi(\KG^r(k,n)_{2-\textrm{stab}}) =  \lceil \frac{n-r(k-1)}{r-1} \rceil$ unless $r$ is odd and $n \equiv k \mod r-1$.

Here we prove the common generalization of the results of Schrijver and of Alon, Frankl, and Lov\'asz
in full generality; see Theorem~\ref{thm:stable}:

\begin{theorem*}
	For any integers $r \ge 2$, $k\ge 1$, $n \ge rk$ we have that $$\chi(\KG^r(k,n)_{2-\textrm{stab}}) = \Big\lceil \frac{n-r(k-1)}{r-1} \Big\rceil.$$
\end{theorem*}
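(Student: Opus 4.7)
The upper bound $\chi(\KG^r(k,n)_{2-\textrm{stab}})\le\lceil \frac{n-r(k-1)}{r-1}\rceil$ is immediate: since $\KG^r(k,n)_{2-\textrm{stab}}$ is an induced sub-hypergraph of $\KG^r(k,n)$, it inherits the greedy Alon--Frankl--Lov\'asz coloring with exactly the right number of colors. The content of the theorem is the matching lower bound.

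For the lower bound I would adopt the constraint method combined with a topological Tverberg-type theorem, which is the framework this paper develops. Write $N = \lceil \frac{n-r(k-1)}{r-1}\rceil$ and assume for contradiction that $c:V(\KG^r(k,n)_{2-\textrm{stab}})\to[N-1]$ is a proper coloring. Place $v_1,\dots,v_n$ on the moment curve in $\R^D$ for a suitable $D$ and extend affinely to $\mu:\Delta_{n-1}\to\R^D$; then assemble an $\Sym_r$-equivariant test map $F$ on the $r$-fold $2$-wise deleted join $(\Delta_{n-1})^{*r}_{\Delta(2)}$ with target $W^{\oplus m}$, where $W$ is the standard $\Sym_r$-representation. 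The coordinates of $F$ should encode three constraints: coloring (so coincidence forces monochromaticity), cardinality (so that each support has size exactly $k$), and $2$-stability (so that no support contains two consecutive elements of $[n]$). The paper's topological Tverberg-type theorem, applied to $F$, then yields a coincidence point whose $r$ pairwise disjoint supports $\sigma_1,\dots,\sigma_r$ form a monochromatic hyperedge of $\KG^r(k,n)_{2-\textrm{stab}}$, contradicting properness of $c$.

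The main difficulty is twofold. First, the dimensional bookkeeping must be tight: the constraints have to consume exactly the right number of target dimensions so that the topological obstruction matches $N$, with any slack giving a strictly weaker bound than Schrijver's optimal one. Second, classical topological Tverberg holds only when $r$ is a prime power, and for general $r$ it is known to fail, so one cannot directly feed $F$ into the \"Ozaydin--B\'ar\'any--Shlosman--Sz\H{u}cs machinery. The route signalled in the abstract is to absorb the constraints into the topological input itself, using intersection results for convex hulls of moment-curve points in Euclidean space, which yields a Tverberg-type conclusion for all $r\ge2$. The delicate verification is that the $2$-stability constraint does not inflate the obstruction count beyond what the unrestricted hypergraph requires --- this is precisely the Schrijver-type phenomenon the theorem asserts, and I expect it to be the place where the specific combinatorial structure of stable $k$-sets enters the topological estimate nontrivially.
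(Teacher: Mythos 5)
Your upper bound is fine, and your general framework (constraint method plus a Tverberg-type theorem, with points on the moment curve) is the right territory, but the plan as written has two genuine gaps.

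First, the mechanism that makes $2$-stability free is missing. If you encode the stability constraint as extra coordinates of an equivariant test map, those coordinates consume target dimensions and the resulting bound degrades below Schrijver's; you flag this as ``the delicate verification'' but do not supply it. The paper's resolution is to split the constraint between the two halves of a transfer theorem (Theorem~\ref{thm:transversality}): one builds $\overline K = K * \Delta_{t-1}$, where the missing faces of $K$ are the \emph{almost} $2$-stable $k$-sets, and maps its vertices in their natural order to a Bukh--Loh--Nivasch point sequence on the stretched moment curve (Lemma~\ref{lem:colorful}). Because the only Tverberg partitions that occur for such a sequence are the colorful ones, any $r$-fold intersection among pairwise disjoint faces of $\overline K$ would force one part to be a $k$-subset of $[n]$ with no two consecutive elements --- i.e., a missing face of $K$ --- a contradiction. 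So almost-$2$-stability costs nothing: it is absorbed by the \emph{order} of the points, not by extra test-map coordinates. The one remaining forbidden pair $\{1,n\}$ (cyclic versus linear stability) is handled on the other side, by taking $L$ to be the simplex minus the edge $\{1,n\}$ and invoking Hell's constrained topological Tverberg theorem (Theorem~\ref{thm:hell}), which again costs no dimensions.

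Second, your claim that the moment-curve intersection results ``yield a Tverberg-type conclusion for all $r\ge 2$'' is not correct and cannot be the route around the prime-power restriction. The affine/geometric input is used only for the \emph{non-existence} half (showing the map $f\colon\overline K\to\R^d$ has no $r$-fold coincidence --- a codimension count plus the colorful classification, valid for every $r$); the \emph{existence} half applied to $L$ is irreducibly topological, since the transfer theorem hands you an arbitrary continuous map, and it genuinely requires $r$ to be a prime (power). General $r$ is recovered by an entirely separate combinatorial induction on prime factors (Lemma~\ref{lem:primes}, due to Alishahi and Hajiabolhassan), which your plan omits. Without that reduction, and without the order-based absorption of the stability constraint, the proposal does not close.
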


Our methods are different from those of~\cite{meunier2011} and~\cite{alishahi2015}.
We use a combination of methods from equivariant topology, in particular, geometric transversality results, and the geometry
of point sets in Euclidean space. This was already partially developed in the recent paper~\cite{frick2017} of the author; see also 
Sarkaria~\cite{sarkaria1990, sarkaria1991} for earlier related work.

In fact, it is possible to generalize these lower bounds from the family of $k$-subsets of~$[n]$ to arbitrary systems~$\F$
of subsets of~$[n]$, establishing K\v r\' i\v z' lower bound for the set system $\F_{2-\textrm{stab}}$, those sets in $\F$ that are stable;
see Theorem~\ref{thm:hyp-schrijver}:

\begin{theorem*}
	Let $\F$ be a system of subsets of~$[n]$ and $r \ge 3$. Then $$\chi(\KG^r(\F_{2-\textrm{stab}})) \ge \Big\lceil \frac{1}{r-1}\cd^r(\F) \Big\rceil.$$
\end{theorem*}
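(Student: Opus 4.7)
The plan is to argue by contradiction in the spirit of the constraint / test-map approach developed in \cite{frick2017}. Suppose there is a proper coloring $c \colon \F_{2-\textrm{stab}} \to [t]$ of $\KG^r(\F_{2-\textrm{stab}})$ with $t$ colors, and set $N := (r-1)t + 1$. The aim is to prove that $N \le \cd^r(\F)$ leads to a contradiction, by producing $r$ pairwise disjoint stable sets from~$\F$ that all receive the same color.

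The geometric engine I would use is the moment curve. Place points $v_1, \dots, v_n$ on the moment curve in $\R^{N-1}$, so that the combinatorics of positively dependent subsets is cyclic. The feature that replaces the octahedral subdivision used in Schrijver's classical $r=2$ case is that, for a Tverberg-style coincidence of convex hulls of $r$ disjoint subsets $A_1,\dots,A_r$ of $\{v_1,\dots,v_n\}$, the Gale/alternating-sign structure on the moment curve forces each $A_i$ to be stable in the cyclic sense on~$[n]$. Thus stability can be read off from the geometric configuration rather than having to be enforced by extra coordinates in the target space of the test map.

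Next I would upgrade this into an equivariant test map. Take $X$ to be the $r$-fold deleted join of the simplex on~$[n]$; it carries a free $\Sym_r$-action and its connectivity is comfortably large. Build an $\Sym_r$-equivariant map $f \colon X \to W$ into a suitable real $\Sym_r$-representation whose vanishing locus encodes three constraints simultaneously: (i) a Tverberg coincidence of the convex hulls of the $r$ blocks at the points~$v_i$, (ii) each block contains a set of~$\F$, which can be enforced by spending $\cd^r(\F)-1$ dimensions via the colorability-defect hypothesis and is therefore affordable because $\cd^r(\F) \ge N$, and (iii) all $r$ blocks carry the same $c$-color, which spends exactly $(r-1)t$ dimensions. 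The index / Sarkaria-type computation, as in \cite{sarkaria1990, sarkaria1991} and \cite{frick2017}, shows that the connectivity of~$X$ exceeds $\dim W$, so $f$ must have a zero; for $r$ not a prime power one invokes the constraint-method reformulation that avoids the failure of the topological Tverberg theorem in that range. The zero produces $r$ pairwise disjoint stable sets $A_1,\dots, A_r$, each containing some $F_i \in \F_{2-\textrm{stab}}$, with $c(F_1)=\dots=c(F_r)$; this is a monochromatic hyperedge in $\KG^r(\F_{2-\textrm{stab}})$, contradicting properness of~$c$.

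The main obstacle I expect is the simultaneous bookkeeping for the test map: the disjointness, in-$\F$ membership, common-color, and stability constraints all have to be encoded into a single equivariant map while the dimension of $W$ stays below the connectivity of~$X$. The moment-curve placement is precisely what makes stability "free" in this budget, so the delicate point is verifying that the map $f$ can be chosen so that the stability constraint is automatically satisfied at zeros, rather than consuming additional dimensions. A secondary subtlety is the case when $r$ is not a prime power, where the classical topological Tverberg theorem fails and one must substitute a constraint-method argument to still extract the required coincidence point.
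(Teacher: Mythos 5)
Your overall strategy---moment-curve placement to make stability ``free'' plus a Tverberg-type coincidence argument---is the right spirit and is close to what the paper does, but there are two genuine gaps. First, the moment curve does \emph{not} force cyclic stability, only linear stability: a Tverberg partition of points on the stretched moment curve (Lemma~\ref{lem:colorful}) is colorful, which prevents any block from containing two \emph{consecutive} integers of $[n]$ in the linear order, but it does nothing to exclude the wrap-around pair $\{1,n\}$ from lying in one block. So your geometric configuration only certifies membership in $\F_{\widetilde{2-\textrm{stab}}}$ (almost $2$-stable sets), which is the content of Theorem~\ref{thm:gen-schrijver}, not of the statement you are proving. The paper closes this gap by deleting the edge $\{1,n\}$ from the ambient complex $L$ and invoking Hell's constrained topological Tverberg theorem (Theorem~\ref{thm:hell}) to still force a coincidence in which no block contains both $1$ and $n$; this is precisely where the hypothesis $r \ge 3$ enters, and it is why the $r=2$ case degrades to almost stable sets. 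Your proposal never confronts this, and without it the ``stability is automatic'' claim fails.

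Second, your treatment of non-prime-power $r$ is not viable as stated. The ``constraint-method reformulation'' does not rescue the topological Tverberg theorem outside prime powers---on the contrary, the constraint method is how the counterexamples were built---and the index/Volovikov computation you invoke genuinely requires $r$ to be a prime power. The paper instead handles general $r$ by a purely combinatorial induction on prime factors in the style of K\v r\' i\v z: for $r = pq$ one defines $\Gamma = \{E \subset [n] : \cd^q(\F|_E) > (q-1)(t-1)\}$, shows $\cd^p(\Gamma) > (p-1)(t-1)$, and nests monochromatic families. Even there a subtlety remains that your outline would miss: when a factor $p=2$ appears, the base case for $2$-stable sets is unavailable (only the almost stable version, Theorem~\ref{thm:gen-schrijver}, holds for $r=2$), so the induction must be arranged so that the $2$-fold step is applied to the auxiliary system $\Gamma$ via Theorem~\ref{thm:kriz} rather than to $\F_{2-\textrm{stab}}$ itself. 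You would need to supply both of these ingredients to complete the proof.
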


For $r = 2$ this lower bound only holds for the almost stable sets in~$\F$; see Theorem~\ref{thm:gen-schrijver}.
With our methods that relate chromatic numbers of hypergraphs to intersections of convex hulls of point sets in
Euclidean space, K\v r\' i\v z' result follows from putting points in strong general position, while for the theorem above we
place points along the moment curve.

While $\cd^r(\F)$ is a purely combinatorial quantity, we define the \emph{topological $r$-colorability defect} $\tcd^r(\F)$
that takes the topology of the set system $\F$ into account and satisfies $\tcd^r(\F) \ge \cd^r(\F)$, where the gap can be
arbitrarily large. We show the following improvement of K\v r\' i\v z' lower bound; see Theorem~\ref{thm:top}:

\begin{theorem*} 
	For $r$ a power of a prime
	$$\chi(\KG^r(\F)) \ge \Big\lceil \frac{1}{r-1}\tcd^r(\F) \Big\rceil.$$
\end{theorem*}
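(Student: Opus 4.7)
The plan is to convert the statement into a comparison of $\Z_r$-equivariant indices, following the test-map framework underlying K\v r\' i\v z' theorem and the topological Tverberg theorem. Let $\sigma(\F) = \{A \subseteq [n] : F \not\subseteq A \text{ for every } F \in \F\}$ be the simplicial complex of subsets of~$[n]$ containing no member of~$\F$. The topological defect $\tcd^r(\F)$ is designed so that the $r$-fold $2$-wise deleted join $\sigma(\F)^{*r}_{\Delta(2)}$, equipped with the cyclic $\Z_r$-action permuting factors, has $\Z_r$-equivariant index at least $\tcd^r(\F) - 1$. The basic inequality $\tcd^r(\F) \ge \cd^r(\F)$ then reflects the standard fact that the index is bounded below by one plus the connectivity, and it is the connectivity bound that K\v r\' i\v z used in his original argument.

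Assume for contradiction that $\chi(\KG^r(\F)) \le t$ with $(r-1)t < \tcd^r(\F)$, and fix a proper $t$-coloring~$c$ of the hypergraph $\KG^r(\F)$. From~$c$ I would construct a $\Z_r$-equivariant map
$$\Phi : \sigma(\F)^{*r}_{\Delta(2)} \longrightarrow S\bigl(W_r^{\oplus t}\bigr),$$
where $W_r = \{x \in \R^r : \sum x_i = 0\}$ is the standard $(r-1)$-dimensional real representation of~$\Z_r$. A typical point of the deleted join has the form $\sum_{i=1}^r \lambda_i A_i$ with the $A_i \in \sigma(\F)$ pairwise disjoint and $\sum_i \lambda_i = 1$. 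For each color $j \in [t]$ one then assembles, from the coloring data on~$\F$, a vector in $\R^r$ whose $i$-th coordinate records the interaction of $A_i$ with color~$j$ weighted by~$\lambda_i$, and projects this onto $W_r$; properness of $c$ on $\KG^r(\F)$ guarantees that the resulting element of $W_r^{\oplus t}$ is nonzero on any face of the $2$-wise deletion, so normalization to the sphere makes sense, and cyclic relabelling of the factors realizes the $\Z_r$-equivariance. Since $r$ is a prime power, $S(W_r^{\oplus t})$ has $\Z_r$-index equal to $(r-1)t - 1$: for $r$ prime because $\Z_r$ acts freely on $W_r \setminus \{0\}$, and for $r = p^k$ by reduction to an elementary abelian $p$-subgroup of~$\Z_r$ as in the topological Tverberg theorem. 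Monotonicity of the equivariant index under equivariant maps then yields $(r-1)t - 1 \ge \tcd^r(\F) - 1$, i.e., $(r-1)t \ge \tcd^r(\F)$, contradicting our assumption and giving the claimed bound after taking ceilings.

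The main obstacle in this plan is the construction of $\Phi$: the coloring~$c$ colors only the sets in~$\F$, not arbitrary faces of $\sigma(\F)$, so one needs a canonical rule for distilling a ``color contribution'' from a face $A_i \in \sigma(\F)$, one must check continuity across the strata of the deleted join on which some $A_i$ collapse to the empty face, and one must verify nonvanishing on the full $2$-wise deletion. The cleanest route is likely to factor $\Phi$ through the ``box'' complex of the hypergraph $\KG^r(\F)$, which comes with a built-in dictionary between $\Z_r$-equivariant maps out of it and proper colorings of the hypergraph; the extra content beyond K\v r\' i\v z is then to arrange this box-complex equivalence so that the deleted join $\sigma(\F)^{*r}_{\Delta(2)}$, rather than a less topologically informative intermediate, provides the source of $\Phi$ -- which is precisely what brings the refined invariant $\tcd^r(\F)$ into play.
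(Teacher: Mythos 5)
Your architecture inverts the role that $\tcd^r(\F)$ actually plays, and the central inequality you build on is false. By definition, $\tcd^r(\F) = N - (r-1)(d+1)$, where $d$ is the \emph{smallest} dimension admitting a continuous map $f\colon K \longrightarrow \R^d$ (with $K$ a complex on $N$ vertices, possibly $N > n$, whose missing faces are the minimal sets of $\F$) that identifies no $r$ points from $r$ pairwise disjoint faces. Such an $f$ induces a $\Z/r$-equivariant map $K^{*r}_{\Delta} \longrightarrow S(W_r^{\oplus(d+1)})$, so it yields the \emph{upper} bound $\mathrm{ind}_{\Z/r}(K^{*r}_{\Delta}) \le (r-1)(d+1)-1 = N - \tcd^r(\F) - 1$, not the lower bound $\mathrm{ind} \ge \tcd^r(\F)-1$ that you assert. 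For a concrete counterexample take $K$ a triangulation of a $d$-ball on $n \gg d$ vertices and $\F$ its missing faces: then $\tcd^2(\F) \ge n-(d+1)$, while the embedding $K \hookrightarrow \R^d$ forces $\mathrm{ind}_{\Z/2}(K^{*2}_{\Delta}) \le d < n-d-2$. The obstacle you flag at the end --- that the coloring colors only members of $\F$ and hence says nothing about faces of $\sigma(\F)$ --- is not a technical wrinkle to be smoothed over but a symptom of the same inversion: no map $\Phi$ out of $\sigma(\F)^{*r}_{\Delta(2)}$ can be extracted from the coloring, because the faces of $\sigma(\F)$ are by construction exactly the sets containing no vertex of $\KG^r(\F)$.

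The correct wiring puts your two ingredients on complementary parts of the deleted join of the \emph{full} simplex: the Tverberg-free map $f$ handles the subcomplex $K^{*r}_{\Delta}$, the $t$-coloring handles those join factors that do contain a member of $\F$, and together they produce a $\Z/r$-equivariant map $(\Delta_{N-1})^{*r}_{\Delta} \longrightarrow S(W_r^{\oplus(d+1+t)})$. Since $(\Delta_{N-1})^{*r}_{\Delta} \cong [r]^{*N}$ has index $N-1$ when $r$ is a prime power (this is where Volovikov/\"Ozaydin enters, via $(\Z/p)^k$ rather than a subgroup of $\Z/p^k$), one gets $N-1 \le (r-1)(d+1+t)-1$, i.e.\ $(r-1)t \ge N-(r-1)(d+1) = \tcd^r(\F)$. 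This is essentially the content of Theorem~\ref{thm:maps}, and the paper's actual proof packages it without explicit equivariant topology: apply the topological Tverberg theorem to maps $\Delta_{N-1} \longrightarrow \R^{D-1}$ with $D = \lfloor\frac{N-1}{r-1}\rfloor$, feed this together with $f$ into Theorem~\ref{thm:transversality} to conclude $\chi(\KG^r(K)) \ge D-d$, and finish with the arithmetic $(r-1)(D-d) \ge N-(r-1)(d+1)$. If you replace the source of your $\Phi$ by $(\Delta_{N-1})^{*r}_{\Delta}$ and split it as above, you recover this argument.
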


Interestingly, while this result easily implies K\v r\' i\v z' result for arbitrary $r$, the improved lower bound only holds for prime powers~$r$.
We further extend the theorem above to proper subhypergraphs of~$\KG^r(\F)$; see Theorems~\ref{thm:top-opt} and~\ref{thm:top-stable},
where the former is a common generalization of the three extensions of Kneser's conjecture mentioned above, which furthermore
detects global topological structure of the set system $\F$ in addition to the local combinatorial structure.

It has been conjectured by Ziegler~\cite{ziegler2002} and Alon, Drewnowski, and \L uczak~\cite{alon2009} that the bound
$\chi(\KG^r(k,n)) = \lceil \frac{n-r(k-1)}{r-1} \rceil$ still holds when the hypergraph $\KG^r(k,n)$ is restricted to vertices
corresponding to $r$-stable sets. A set $\sigma \subset [n]$ is \emph{$s$-stable} if any two elements of $\sigma$ are at distance
$\ge s$ in the cyclic order of~$[n]$. Schrijver's result is the $r=2$ case of this conjecture, and Alon, Drewnowski, and \L uczak
furthermore showed that if the conjecture holds for $r = p$ and $r = q$ then it also holds for their product $r =pq$. Thus the
conjectured lower bound holds for $r$ a power of two. Aside from this combinatorial reduction, no results for $s$-stable Kneser
hypergraphs for $s > 2$ were known. Here we prove optimal lower bounds for chromatic numbers of $s$-stable Kneser hypergraphs
$\chi(\KG^r(k,n)_{s-\textrm{stab}})$ for any~$s \ge 2$; see Corollary~\ref{cor:s-stable}:

\begin{theorem*}
	Let $s \ge 2$ be an integer and $r > 6s-6$ a prime power. Then for integers $k \ge 1$ and $n \ge rk$ we have that
	$$\chi(\KG^r(k,n)_{s-\textrm{stab}}) = \Big\lceil \frac{n-r(k-1)}{r-1} \Big\rceil.$$
\end{theorem*}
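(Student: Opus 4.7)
The upper bound $\chi(\KG^r(k,n)_{s\text{-stab}}) \leq \lceil \frac{n-r(k-1)}{r-1} \rceil$ is classical: partitioning $[n]$ into consecutive blocks of length $r-1$ (together with a shorter tail block) and assigning each $k$-subset the color of the block that contains its least element gives a valid coloring of $\KG^r(k,n)$, which restricts to any induced subhypergraph. For the lower bound, my plan is to specialize the topological machinery of Theorem~\ref{thm:top-opt} (or the more general Theorem~\ref{thm:top-stable}) to the family $\F=\binom{[n]}{k}$: the $s$-stable Kneser hypergraph is the subhypergraph of $\KG^r(\F)$ on the stable $k$-sets, and the task is to show that removing the non-$s$-stable vertices does not decrease the chromatic number below the lower bound for $\KG^r(k,n)$ itself, provided $r>6s-6$.

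The geometric input is to represent $[n]$ by the points $\gamma(1),\ldots,\gamma(n)$ on the moment curve $\gamma(t)=(t,t^2,\ldots,t^{r-2})\in\R^{r-2}$. Given a putative coloring with fewer than $\lceil \frac{n-r(k-1)}{r-1}\rceil$ colors, extend it arbitrarily to the non-$s$-stable $k$-sets and invoke the equivariant topological Tverberg theorem for prime-power $r$ that underlies the earlier theorems: this produces a Tverberg partition $\{A_1,\ldots,A_r\}$ of some subset of $[n]$ in which the $A_i$ are pairwise disjoint, each is contained in a single color class, $|A_i|\ge k$, and $\bigcap_{i=1}^r \conv(\gamma(A_i))\neq\emptyset$. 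The Gale evenness property of the moment curve imposes rigid combinatorial structure on such partitions: every $A_i$ decomposes into intervals of $[n]$ whose complementary gaps satisfy a strict parity condition.

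The crucial step is then to upgrade such a partition so that every part $A_i$ is $s$-stable. The heuristic is that each pair of indices in a common part at cyclic distance $<s$ is an obstruction of bounded combinatorial cost, and that within the Gale-even intervals there is enough slack to shift the partition to remove all obstructions simultaneously, provided the number of parts $r$ exceeds a linear function of $s$. Carrying out this count sharply gives the threshold $r>6s-6$, after which one obtains a monochromatic $r$-tuple of pairwise disjoint $s$-stable $k$-sets, contradicting the assumed coloring and completing the lower bound.

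The main obstacle will be precisely this quantitative trade-off: combining (i) the equivariant obstruction theory that delivers a Tverberg partition only when $r$ is a prime power, (ii) the constraint method that adds $s$-stability constraints at a controlled dimensional cost, and (iii) the explicit Gale-evenness geometry of the moment curve, in a way that still leaves a Tverberg partition consisting entirely of $s$-stable sets. The remaining ingredients — the translation between colorings of $\KG^r(\F)$ and Tverberg-type statements spelled out in Section~\ref{sec:nonlocal}, and the transversality tools developed in~\cite{frick2017} — can be invoked largely as black boxes.
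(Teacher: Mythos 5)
Your upper bound is fine, and your instinct to route the lower bound through Theorem~\ref{thm:top-stable} with $\F=\binom{[n]}{k}$ and $\tcd^r\big(\binom{[n]}{k}\big)=n-r(k-1)$ matches the paper. But the heart of your argument --- obtaining an unconstrained Tverberg partition from the coloring and then ``upgrading'' it to an $s$-stable one using Gale evenness on the moment curve --- has a genuine gap, in fact two. First, the moment curve sits on the wrong side of the scheme. In the transversality argument (Theorem~\ref{thm:transversality}) the special point configuration (strong general position, or Lemma~\ref{lem:colorful}) is used only to certify that the low-dimensional map $f\colon K\to\R^d$ with $d\approx k$ has \emph{no} $r$-fold coincidence among pairwise disjoint faces; the Tverberg partition you extract from a putative coloring comes from a topological Tverberg-type theorem applied to a \emph{continuous} map $F\colon L\to\R^{D-1}$ built from the coloring, and its parts carry no Gale-evenness structure whatsoever. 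So the ``rigid combinatorial structure'' you want to exploit is simply not present in the partition you actually obtain.

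Second, and more importantly, perturbing a Tverberg partition so that every part becomes $s$-stable is exactly what one cannot do by hand; it is the content of a constrained Tverberg theorem. The paper applies Engstr\"om's theorem (Theorem~\ref{thm:engstrom}): let $G$ be the graph on $[n]$ joining $i$ and $j$ whenever they are at cyclic distance at most $s-1$, and let $L$ be the simplex with the edges of $G$ deleted; then $|N(v)|=|N^2(v)|=2s-2$ for every vertex, so Engstr\"om's local criterion $2|N(v)|+|N^2(v)|<r$ reads precisely $6s-6<r$. That is the sole source of the threshold in the statement; your assertion that ``carrying out this count sharply gives $r>6s-6$'' is not derived from anything, and no count of obstructions inside Gale-even intervals will reproduce it. To repair the proof, replace the upgrading step by an application of Theorem~\ref{thm:engstrom} to this graph $G$ (giving a Tverberg partition of $L$ none of whose parts contains two cyclically close elements, hence all of whose relevant missing faces are $s$-stable $k$-sets) and conclude via Theorem~\ref{thm:transversality} as in Theorem~\ref{thm:top-stable}; the moment curve is not needed at all for this corollary, since a strong general position map already computes $\tcd^r\big(\binom{[n]}{k}\big)=n-r(k-1)$.
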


We prove extensions of this bound to arbitrary set systems; see Theorem~\ref{thm:top-stable}.
Section~\ref{sec:box} is devoted to proving further extensions of K\v r\' i\v z' bound to arbitrary $r$-uniform hypergraphs, 
extending the results for generalized Kneser hypergraphs. There we briefly relate our methods to the standard ``box complex approach.''

\section{The chromatic number of Kneser hypergraphs --- a simple proof}
\label{sec:simple}

As a toy example to showcase our method of relating chromatic numbers of hypergraphs to the geometry of point sets
in Euclidean space and geometric transversality results, we give a new and simple proof that $\chi(\KG^r(k,n)) 
= \lceil \frac{n-r(k-1)}{r-1} \rceil$. In~\cite{frick2017} the author gave a simple proof of the $r=2$ case, that is, Kneser's original conjecture,
by reducing it to the topological Radon theorem of Bajmoczy and B\'ar\'any~\cite{bajmoczy1979} 
that for any continuous map $f\colon \Delta_{d+1} \longrightarrow \R^d$
there are two points in disjoint faces of the $(d+1)$-simplex $\Delta_{d+1}$ that are identified by~$f$. Generalizations to the hypergraph setting, $r > 2$, 
use an $r$-fold analogue of the topological Radon theorem, the topological Tverberg theorem: For any continuous map 
$f\colon \Delta_{(r-1)(d+1)} \longrightarrow \R^d$ there are $r$ pairwise disjoint faces $\sigma_1, \dots, \sigma_r$ of~$\Delta_{(r-1)(d+1)}$
such that $f(\sigma_1) \cap \dots \cap f(\sigma_r) \ne \emptyset$ provided that $r$ is a power of a prime. This theorem is due to 
B\'ar\'any, Shlosman, and Sz\H ucz~\cite{barany1981} for prime~$r$, and due to \"Ozaydin~\cite{ozaydin1987} 
for the general case of prime powers. The same result fails for $r$ with at least two distinct prime divisors~\cite{blagojevic2015, frick2015, mabillard2015}.
In the following we will use that a generic map $f\colon \Delta_{(r-1)(d+1)-1} \longrightarrow \R^d$ does not identify points from
$r$ pairwise disjoint faces by a simple codimension count. If $f$ is an affine map then the correct notion of genericity for the vertex set
is strong general position; see Perles and Sigron~\cite{perles2014}.

Let $K$ be a simplicial complex on vertex set~$[n]$. A set $\sigma \subset [n]$ is called \emph{missing face} of~$K$ if
$\sigma \notin K$ and every proper subset of $\sigma$ is a face of~$K$. For an integer $r \ge 2$ the \emph{Kneser
hypergraph}~$\KG^r(K)$ of $K$ has as its vertex set the missing faces of~$K$, and a hyperedge for any $r$ pairwise disjoint missing
faces. More generally, for a simplicial complex $L \supset K$ the subhypergraph of $\KG^r(K)$ induced by those vertices that
correspond to missing faces of~$K$ that are faces of~$L$ is denoted by~$\KG^r(K, L)$.
We will need the following result, which is a simple consequence of the topological Tverberg theorem:

\begin{theorem}[{\cite[Cor.~4.7]{frick2017}}]
\label{thm:lower-bounds}
	Let $d \ge 0$ be an integer, and let $r\ge 2$ be a prime power. Let $K$ be a simplicial complex on at most $N=(r-1)(d+1)+1$ vertices such that 
	there exists a continuous map $f\colon K \longrightarrow \R^d$ with the property that for any $r$ pairwise 
	disjoint faces $\sigma_1, \dots, \sigma_r$ of $K$ we have that $f(\sigma_1) \cap \dots \cap f(\sigma_r) = \emptyset$.
	Then $\chi(\KG^r(K)) \ge \lfloor\frac{N-1}{r-1}\rfloor-d$.
\end{theorem}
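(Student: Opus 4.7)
The plan is to prove the contrapositive. Set $c = \chi(\KG^r(K))$ and fix a proper coloring $\kappa\colon \mathrm{MF}(K) \to [c]$ of the missing faces of~$K$. I would extend $f$ to a continuous map $g\colon |\Delta_{N-1}| \to \R^{d+c}$ that admits no $r$-fold Tverberg intersection; since $r$ is a prime power, the topological Tverberg theorem then forces $N - 1 < (r-1)(d + c + 1)$, which rearranges to $c \ge \lfloor (N-1)/(r-1) \rfloor - d$.

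To construct $g = (f', \phi)$, let $f'\colon |\Delta_{N-1}| \to \R^d$ be any continuous extension of $f$ (possible since $\R^d$ is an absolute retract), and let $p_1, \dots, p_c$ be the standard basis vectors of $\R^c$. For each missing face $\sigma$ of $K$, define the bump function $\rho_\sigma(x) = \prod_{v \in \sigma} x_v$ in barycentric coordinates of $\Delta_{N-1}$, and set
\[
\phi(x) = \sum_{\sigma \in \mathrm{MF}(K)} \rho_\sigma(x)\, p_{\kappa(\sigma)}.
\]
Because $\rho_\sigma(x) > 0$ exactly when $\mathrm{supp}(x) \supseteq \sigma$, and $K$ is closed under taking subsets, one checks that $\phi(x) = 0$ if and only if $\mathrm{supp}(x) \in K$, i.e., $x \in |K|$.

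The central step is to verify $g$ has no $r$-fold Tverberg intersection. Assume, for contradiction, that pairwise disjoint faces $\tau_1, \dots, \tau_r$ of $\Delta_{N-1}$ and points $x_j \in |\tau_j|$ satisfy $g(x_1) = \dots = g(x_r) = (q, p)$. If $p = 0$, then each $\mathrm{supp}(x_j)$ is a face of $K$, and these $r$ pairwise disjoint faces of $K$ have common $f$-image $q$, contradicting the hypothesis on~$f$. Otherwise $p \neq 0$; writing $p = \sum_i \alpha_i p_i$ (uniquely, by linear independence) and picking $i^*$ with $\alpha_{i^*} > 0$, equating coefficients in $\phi(x_j) = p$ forces $\sum_{\sigma \in \mathrm{MF}(K),\, \kappa(\sigma) = i^*,\, \sigma \subseteq \mathrm{supp}(x_j)} \rho_\sigma(x_j) = \alpha_{i^*} > 0$ for each $j$, so each $\mathrm{supp}(x_j) \subseteq \tau_j$ contains a missing face of color $i^*$. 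The resulting $r$ pairwise disjoint missing faces sharing a common color contradict properness of~$\kappa$.

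The main obstacle is engineering $\phi$ so the case analysis above is clean. The bump functions $\rho_\sigma$ respect the face lattice of $\Delta_{N-1}$ and map $|K|$ exactly to the origin, while the choice of linearly independent $p_i$'s ensures any nontrivial nonnegative combination of them is nonzero. Together these force the dichotomy ``$p = 0$ implies all $x_j \in |K|$, $p \ne 0$ implies all $x_j \notin |K|$,'' ruling out any ``mixed'' scenario where some $x_j$ lie in $|K|$ and others do not. Once this is in place, invoking topological Tverberg on $g$ closes the argument.
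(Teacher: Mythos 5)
Your proof is correct and is essentially the paper's own argument: Theorem~\ref{thm:lower-bounds} is deduced there (via Theorem~\ref{thm:transversality}, applied with $L=\Delta_{N-1}$) by exactly this constraint-method construction — extend $f$ to the full simplex, append $c$ color-detecting coordinates, and apply the topological Tverberg theorem to the augmented map. The only cosmetic difference is that you build the color coordinates from polynomial bump functions $\prod_{v\in\sigma}x_v$ where the source uses distance functions to the subcomplexes of faces avoiding each color class; the case analysis is identical.
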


As a simple consequence we obtain:

\begin{theorem}[Alon, Frankl, Lov\'asz~\cite{alon1986}]
\label{thm:afl}
	For any integers $r \ge 2$, $k\ge 1$, $n \ge rk$ we have that $$\chi(\KG^r(k,n)) = \Big\lceil \frac{n-r(k-1)}{r-1} \Big\rceil.$$
\end{theorem}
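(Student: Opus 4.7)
I would split the argument into the (easy) upper bound via an explicit greedy coloring, and the lower bound via Theorem~\ref{thm:lower-bounds}.

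For the upper bound, partition $[n]$ into an initial block $B_0 = \{1, \dots, r(k-1)\}$ followed by $m := \lceil (n - r(k-1))/(r-1)\rceil$ consecutive blocks $B_1, \dots, B_m$ of $r-1$ elements each (the last possibly smaller, so that $\bigcup_i B_i = [n]$). Color a $k$-subset $\sigma$ by $c(\sigma) = \max\{i \ge 1 : \sigma \cap B_i \ne \emptyset\}$, defaulting to color $1$ when $\sigma \subseteq B_0$. A monochromatic $r$-tuple of pairwise disjoint $k$-sets in color $1$ would have to fit inside $B_0 \cup B_1$ of size $rk-1 < rk$, which is impossible; in color class $i \ge 2$, each of the $r$ disjoint sets would have to meet $B_i$ in a nonempty subset, yielding $r$ pairwise disjoint nonempty subsets of $B_i$, but $|B_i| \le r-1$. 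Both cases are impossible, so $m$ colors suffice.

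For the lower bound, I would apply Theorem~\ref{thm:lower-bounds} to the complex $K$ equal to the $(k-2)$-skeleton of $\Delta_{[n]}$. Its missing faces are exactly the $k$-subsets of $[n]$, so that $\KG^r(K) = \KG^r(k,n)$. The key step is constructing the required map $f\colon K \to \R^d$. I would pick $d$ so that both $r(k-1) \le (r-1)(d+1)$ (for the map to exist) and $n \le (r-1)(d+1)+1$ (the vertex bound in Theorem~\ref{thm:lower-bounds}) are satisfied; for $n \ge rk$ the second constraint is the binding one. Then realize $K$ geometrically by placing its $n$ vertices at points in strong general position in $\R^d$ in the sense of Perles--Sigron (for example, points on the moment curve), and extend $f$ affinely on each face. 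Since every face of $K$ has at most $k-1$ vertices, any collection of $r$ pairwise disjoint faces of $K$ involves at most $r(k-1) \le (r-1)(d+1)$ vertices in total, and the defining property of strong general position is exactly that no $r$ pairwise disjoint groups of at most $(r-1)(d+1)$ points have convex hulls with a common point. Hence $f(\sigma_1) \cap \dots \cap f(\sigma_r) = \emptyset$ for any $r$ pairwise disjoint faces, and Theorem~\ref{thm:lower-bounds} delivers a lower bound on $\chi(\KG^r(K)) = \chi(\KG^r(k,n))$.

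The remaining technicality is a short, slightly fiddly calculation checking that the expression $\lfloor (N-1)/(r-1)\rfloor - d$ coming out of Theorem~\ref{thm:lower-bounds} does simplify to the target $\lceil (n-r(k-1))/(r-1)\rceil$; this amounts to a case analysis on the residue of $k-1$ modulo $r-1$. The main conceptual obstacle, and the whole reason the approach is nontrivial, is exactly the one flagged in the discussion preceding Theorem~\ref{thm:lower-bounds}: ordinary affine general position does not make the Tverberg bound sharp at the critical dimension $d$, and one must invoke the finer Perles--Sigron notion of strong general position to guarantee the no-common-intersection property for $r$ pairwise disjoint groups of at most $(r-1)(d+1)$ points.
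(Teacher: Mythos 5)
Your upper bound is the standard greedy coloring and is correct. The lower bound, however, has a genuine gap in the choice of the dimension $d$. You impose the constraint $n \le (r-1)(d+1)+1$ and note that for $n \ge rk$ it is the binding one; but the conclusion of Theorem~\ref{thm:lower-bounds} is $\chi(\KG^r(K)) \ge \lfloor\tfrac{N-1}{r-1}\rfloor - d$ with $N$ the number of vertices actually used, and this quantity \emph{decreases} as $d$ grows. Taking the smallest $d$ with $(r-1)(d+1)+1 \ge n$ gives $d = \lceil\tfrac{n-1}{r-1}\rceil - 1$, hence a bound of $\lfloor\tfrac{n-1}{r-1}\rfloor - \lceil\tfrac{n-1}{r-1}\rceil + 1 \le 1$, which is useless since the target is at least $2$ whenever $n \ge rk$. (The paper's own application takes $K$ on $n+t$ vertices with $n+t$ much larger than $(r-1)(d+1)+1$, so that hypothesis must be read with $N$ denoting the actual vertex count, not as a cap on it.) The logic has to run the other way: $d$ must be as \emph{small} as the existence of the map allows, namely the unique $d$ with $r(k-1)-1 \ge (r-1)d > r(k-2)$, and $N$ should then be pushed as high as possible. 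Even with this $d$ and $N=n$, your construction gives $\lfloor\tfrac{n-1}{r-1}\rfloor - d$, which can fall short of $\lceil\tfrac{n-r(k-1)}{r-1}\rceil$ by one (try $r=3$, $k=2$, $n=6$: target $2$, bound $1$); this is not a fiddly residue computation but a real deficit. The paper's fix is to pad: set $t = (r-1)d - r(k-2)-1 \in \{0,\dots,r-2\}$ and use $K = \Delta_{n-1}^{(k-2)} * \Delta_{t-1}$. The join adds $t$ vertices, raising $N$ to $n+t$ without creating new missing faces, and since the $t$ new vertices lie in a single simplex they contribute at most $t$ in total to any family of pairwise disjoint faces; such a family therefore involves at most $r(k-1)+t = (r-1)(d+1)$ points, so a strong general position map to $\R^d$ still avoids $r$-fold intersections, and only then does $\lfloor\tfrac{n+t-1}{r-1}\rfloor - d$ come out to at least $\tfrac{n-r(k-1)}{r-1}$.

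A second, smaller omission: Theorem~\ref{thm:lower-bounds} requires $r$ to be a prime power, while the statement is claimed for all $r \ge 2$. You need the standard reduction that validity for $r=p$ and $r=q$ implies validity for $r=pq$, applied by induction on the number of prime factors as in Alon, Frankl, and Lov\'asz; the paper dispatches this in one sentence, but it cannot be skipped.
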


\begin{proof}
	Let $r \ge 2$ be a prime. Choose the integer $d$ such that $r(k - 1) - 1 \ge (r - 1)d > r(k - 2)$ and let
	$t = (r-1)d-r(k-2)-1$. Let $K = \Delta_{n-1}^{(k-2)} * \Delta_{t-1}$, that is, $K$ is the simplicial complex 
	on vertex set $[n+t]$ whose missing faces are precisely the $k$-element subsets of~$[n]$. Let 
	$f\colon K \longrightarrow \R^d$ be a generic map.
	
	Let $\sigma_1, \dots, \sigma_r$ be pairwise disjoint faces of~$K$. 
	They involve at most $r(k-1)+t = (r-1)(d+1)$ vertices.
	Thus the intersection $f(\sigma_1) \cap \dots \cap f(\sigma_r)$ is empty. Now the hypergraph $\KG^r(K)$ of missing faces
	of $K$ is~$\KG^r(k,n)$, and thus $\chi(\KG^r(k,n)) \ge \lfloor \frac{n+t-1}{r-1} \rfloor -d$. Write 
	$\lfloor \frac{n+t-1}{r-1} \rfloor = \frac{n+t-1 - \alpha}{r-1}$ for some $0 \le \alpha \le r-2$, and furthermore we have that
	$d = \frac{r(k-2)+t+1}{r-1}$. Thus
	$$\chi(\KG^r(k,n)) \ge  \frac{n+t-1 - \alpha}{r-1} - \frac{r(k-2)+t+1}{r-1} = \frac{n-r(k-2)-(\alpha+2)}{r-1}
	\ge \frac{n-r(k-1)}{r-1}.$$
	
	The case for general $r$ follows by a simple induction on the number of prime divisors as in~\cite{alon1986}.
\end{proof}

The proof above provides us with two ways to fine-tune in order to obtain stronger results. 
First, the simplicial complex $K$ does not have a point of $r$-fold coincidence among its pairwise disjoint faces
for codimension reasons. Adding a face to $K$ while preserving this property either deletes a missing face from $K$
and thus a vertex from $\KG^r(K)$ while not decreasing its chromatic number, or increases the size of a missing face
by one, which potentially deletes hyperedges from~$\KG^r(K)$ while preserving~$\chi(\KG^r(K))$.

Secondly, the lower bound for the chromatic number builds on a contradiction to the topological Tverberg theorem.
Proper strengthenings are due to Vu\v ci\' c and \v Zivaljevi\'c~\cite{vucic1993}, Hell~\cite{hell2008}, Engstr\"om~\cite{engstrom2011}, and 
Blagojevi\'c, Matschke, and Ziegler~\cite{blagojevic2009}. 
In Sections~\ref{sec:geom-transversality} and~\ref{sec:nonlocal} we will leverage some of these results to obtain sparse subhypergraphs
of~$\KG^r(k,n)$ that still have the same chromatic number, and extend this approach to general Kneser hypergraphs.

\section{Lower bounds for chromatic numbers via geometric transversality}
\label{sec:geom-transversality}

In this section we will extend the reasoning of the previous section to prove tight lower bounds for $2$-stable and,
for $r \ge 5$, $r \notin \{6,12\}$, and for $r=4$ for certain parameters, for $3$-stable Kneser hypergraphs. The same methods also imply 
an approximation to Ziegler's conjecture that $\chi(\KG^r(k,n)_{r-\textrm{stab}}) = \chi(\KG^r(k,n))$ for~${n \ge rk}$. The proofs in this section all
follow the same general pattern that we already outlined for the proof of Theorem~\ref{thm:afl} and contain
ad-hoc constructions. We will present a more generalized framework that works for general Kneser hypergraphs 
and implies tight lower bounds for the chromatic number of $s$-stable Kneser hypergraphs, where $s$ grows linearly
with~$r$, in Section~\ref{sec:nonlocal}.
In order to find sparse subhypergraphs of $\KG^r(k,n)$ that still require at least $\lceil \frac{n-r(k-1)}{r-1} \rceil$
colors in any proper coloring, we first need to recall:

\begin{theorem}[{\cite[Theorem~4.5]{frick2017}}]
\label{thm:transversality}
	Let $d, c \ge 0$ and $r \ge 2$ be integers, $K \subseteq L$ simplicial complexes such that
	for every continuous map $F\colon L \longrightarrow \R^{d+c}$ there are $r$ pairwise disjoint
	faces $\sigma_1, \dots, \sigma_r$ of $L$ such that $F(\sigma_1) \cap \dots \cap F(\sigma_r) \ne \emptyset$.
	Suppose $\chi(\KG^r(K, L)) \le c$. Then for every continuous map
	$f\colon K \longrightarrow \R^d$ there are $r$ pairwise disjoint faces $\sigma_1, \dots, \sigma_r$
	of $K$ such that $f(\sigma_1) \cap \dots \cap f(\sigma_r) \ne \emptyset$.
\end{theorem}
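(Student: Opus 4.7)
The plan is to prove the contrapositive by the \emph{constraint method}: assume $f\colon K\to\R^d$ admits no $r$-fold coincidence along pairwise disjoint faces of $K$, and use a proper $c$-coloring of $\KG^r(K,L)$ to build $c$ auxiliary ``constraint coordinates'' that extend $f$ to a continuous $F\colon L\to\R^{d+c}$ with the same property, contradicting the hypothesis on $L$.

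For the construction I would first extend $f$ to a continuous $\tilde f\colon L\to\R^d$ (possible since $K\subseteq L$ is a subcomplex), fix a proper $c$-coloring $\chi$ with color classes $C_1,\dots,C_c$ of the missing faces of $K$ lying in $L$, and for each such missing face $\sigma$ define the bump function $\phi_\sigma(x)=\min_{v\in\sigma}\lambda_v(x)$ via barycentric coordinates on $|L|$. The function $\phi_\sigma$ is continuous, vanishes off the open star of $\sigma$, and is strictly positive on it. Setting $g_i(x)=\sum_{\sigma\in C_i}\phi_\sigma(x)$ and $F(x)=(\tilde f(x),g_1(x),\dots,g_c(x))$ yields the desired extension.

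Applying the hypothesis to this $F$ produces pairwise disjoint faces $\tau_1,\dots,\tau_r$ of $L$ with a common image point $p=(p_0,p_1,\dots,p_c)$ and preimages $x_j\in\tau_j$ with $F(x_j)=p$. If $p_i=0$ for all $i$, then $\phi_\sigma(x_j)=0$ for every missing face $\sigma$ of $K$ in $L$, which forces the carrier $\tau_j'\subseteq\tau_j$ of $x_j$ to contain no missing face; hence $\tau_j'$ is actually a face of $K$ and $f(x_j)=\tilde f(x_j)=p_0$, so the $\tau_j'$ form $r$ pairwise disjoint faces of $K$ with a common image point, contradicting the assumption on $f$. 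If instead some $p_{i_0}>0$, then for each $j$ some $\sigma_j\in C_{i_0}$ satisfies $\phi_{\sigma_j}(x_j)>0$, whence $\sigma_j\subseteq\tau_j$; the pairwise disjointness of the $\tau_j$ passes to the $\sigma_j$, producing $r$ pairwise disjoint missing faces of the same color $i_0$, contradicting the properness of $\chi$.

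The step I expect to require the most care is ensuring that the first case genuinely traps each $x_j$ inside a face of $K$, rather than inside a face of $L\setminus K$ sitting above some missing face. The choice $\phi_\sigma=\min_{v\in\sigma}\lambda_v$ is tailored to this: every face of $L$ that contains $\sigma$ lies in the support of $\phi_\sigma$, so simultaneous vanishing of all constraint coordinates at $x_j$ is equivalent to the carrier of $x_j$ containing no missing face of $K$, i.e., being a face of $K$. With this choice the remainder of the argument is standard constraint-method bookkeeping in the spirit of Blagojevi\'c--Frick--Ziegler.
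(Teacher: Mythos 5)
Your proof is correct, and it is essentially the proof of this statement: the paper itself gives no argument but imports the result from~\cite{frick2017}, where it is proved by exactly this constraint-method contrapositive (extend $f$ over $L$, append one auxiliary coordinate per color class, and split into the ``all auxiliary coordinates vanish'' and ``some coordinate is positive'' cases). The only cosmetic difference is that there the $i$-th auxiliary coordinate is the distance to the subcomplex of faces of $L$ containing no missing face of color $i$, rather than your sum $\sum_{\sigma\in C_i}\min_{v\in\sigma}\lambda_v$; the two functions have the same zero set, so the case analysis is identical.
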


\noindent
We now need:
\begin{compactenum}[1.]
	\item \label{item1}
	A large simplicial complex $K$ containing the $(k-2)$-skeleton of~$\Delta_{n-1}$ such that there exists a
	continuous map $f\colon K \longrightarrow \R^d$ that does not identify points in $r$ pairwise disjoint faces,
	where $d = \lceil \frac{r(k-2)+1}{r-1} \rceil$.
	\item \label{item2}
	A small simplicial complex $L \supset K$ such that any continuous map $f\colon L \longrightarrow \R^{D-1}$ identifies
	points from $r$ pairwise disjoint faces, where $D = \lceil \frac{n-1}{r-1} \rceil$.
\end{compactenum}

Concerning point~\ref{item1} good candidates are (subcomplexes of) cyclic polytopes of dimension~${2k-2}$. These polytopes 
arise as convex hulls of finitely many points on the \emph{moment curve} $\gamma(t) = (t, t^2, \dots, t^d)$. In fact, we 
will use that the intersection combinatorics of convex hulls of point sets on the stretched moment curve are understood,
provided that the points are sufficiently far apart; see Bukh, Loh, and Nivasch~\cite{bukh2016} (according to the authors 
of~\cite{bukh2016} this was independently observed by B\'ar\'any and P\'or as well as Mabillard and Wagner).
A partition of $\{1,2, \dots, (r-1)(d+1)+1\}$ into $r$ parts $X_1, \dots, X_r$ is called \emph{colorful} if for each $1 \le k \le d+1$ 
the set $Y_k = \{(r-1)(k-1)+1, \dots, (r-1)k+1\}$ satisfies $|Y_k \cap X_i| = 1$ for all~$i$. We say that a partition $X_1 \sqcup \dots \sqcup X_r$ 
of $\{1,2, \dots, (r-1)(d+1)+1\}$ \emph{occurs} as a Tverberg partition in a sequence $x_1, \dots, x_N$ of points in~$\R^d$ 
if there is a subsequence $x_{i_1}, \dots, x_{i_n}$ of length $n = (r-1)(d+1)+1$ such that the sets $\conv\{x_{i_k} \: | \: k \in X_j\}$ 
all share a common point. We now have the following lemma:

\begin{lemma}[Bukh, Loh, and Nivasch~\cite{bukh2016}]
\label{lem:colorful}
	There are arbitrarily long sequences of points in $\R^d$ such that the Tverberg partitions that occur are precisely the colorful ones. 
\end{lemma}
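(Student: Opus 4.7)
The plan is to follow the stretched moment curve construction of Bukh, Loh, and Nivasch. I would place $N$ points on the moment curve $\gamma(t) = (t, t^2, \dots, t^d) \in \R^d$ at positions $t_1 < t_2 < \dots < t_N$ growing at a super-geometric rate — concretely $t_i = T^i$ with $T = T(N,r,d)$ a sufficiently large parameter. The guiding idea is that for large $T$ each coordinate of $\gamma(t_i)$ dominates all contributions from lower-indexed points, so the combinatorics of intersections of convex hulls of subsets of $\{\gamma(t_i)\}$ degenerates into a purely order-theoretic model.

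For the easier direction — that every colorful partition occurs — I would take any length-$n$ subsequence with $n = (r-1)(d+1)+1$ and invoke the classical Tverberg theorem on the moment curve, whose standard proof (via an alternating-sign Sarkaria argument) produces a Tverberg point whose witnessing partition assigns exactly one index to every slot of each layer $Y_k$. Varying the subsequence and the particular alternating pattern exhibits every colorful partition as a Tverberg partition on some length-$n$ subsequence.

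The harder direction, that only colorful partitions can occur, is where I would invoke the stair-convexity framework of Bukh, Loh, and Nivasch. After a coordinate-wise rescaling depending on $T$, the convex hulls of finite subsets of $\{\gamma(t_i)\}$ converge as $T \to \infty$ to their \emph{stair-convex} hulls, namely axis-parallel regions generated by coordinate-wise monotone paths through the chosen points. In this limit one shows by a direct combinatorial inspection that the intersection of $r$ stair-convex hulls of disjoint parts $X_1, \dots, X_r$ is nonempty if and only if, coordinate by coordinate, the indices interlace in a prescribed way; this in turn forces each of the $d+1$ layers $Y_k$ to contribute exactly one index to each of the $r$ parts, which is precisely the colorful condition.

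The main obstacle is the quantitative step: one must choose $T$ large enough that the stair-convex approximation controls \emph{all} $\binom{N}{n}$ length-$n$ subsequences simultaneously, ruling out any non-colorful Tverberg point arising from genuine (rather than stair-convex) convex hulls. This uniform control across exponentially many candidate partitions is precisely why the stair-convex formalism — as opposed to a direct perturbation argument on the moment curve — is the right tool here.
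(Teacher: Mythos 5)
First, a point of context: the paper does not prove this lemma at all --- it is imported as a black box from Bukh, Loh, and Nivasch~\cite{bukh2016} --- so your task was in effect to reconstruct their argument. For the hard direction (that \emph{only} colorful partitions occur) your reconstruction does identify their actual strategy: a drastically stretched point sequence near the moment curve (their ``stretched diagonal''), the stair-convexity limit in which convex hulls of subsets degenerate to stair-convex hulls, and a purely combinatorial analysis of when $r$ stair-convex hulls of disjoint index sets can share a point. Two quantitative quibbles: geometric growth $t_i = T^i$ on the literal moment curve is almost certainly too slow --- the stretched grid underlying stair-convexity has coordinate scales growing at iterated-exponential rates, and the construction is a combinatorial deformation of the moment curve rather than a subset of it --- and the uniformity over all $\binom{N}{n}$ subsequences that worries you is automatic once the stair-convex limit is controlled for each of the finitely many $n$-element subsets, so it is not the real obstacle.

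The genuine gap is in your ``easier direction.'' Tverberg's theorem, by whatever proof, asserts the \emph{existence} of some partition of a given $n$-point set whose convex hulls meet; it gives no control whatsoever over \emph{which} partition is produced, so ``varying the alternating pattern'' in a Sarkaria-type proof cannot manufacture a Tverberg point for a \emph{prescribed} colorful partition. Worse, for points on the ordinary (unstretched) moment curve the class of Tverberg partitions is in general strictly larger than the colorful ones --- this is essentially why Perles and Sigron~\cite{perles2014} and Bukh--Loh--Nivasch needed separate analyses, and why the stretching is not a convenience but a necessity --- so no argument that ignores the stretching can establish either inclusion. In~\cite{bukh2016} this direction is also handled through stair-convexity: one verifies directly that for every colorful partition the stair-convex hulls of the parts contain a common point, robustly enough that the genuine convex hulls still intersect at large finite stretching. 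I note that the applications in this paper (Theorems~\ref{thm:stable}, \ref{thm:almost-3-stable}, and~\ref{thm:hyp-schrijver}) use only the implication you do establish --- any partition realizing an $r$-fold intersection point must be colorful --- so the missing direction does not propagate into the rest of the paper, but it is required for the lemma as stated.
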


These point sets are spread along the stretched moment curve. 
Let $P \subset \R^d$ be a (sufficiently large) point set provided by the lemma above. Let $X_1, \dots, X_r \subset P$ be pairwise disjoint sets
with $|\bigcup_i X_i| \le (r-1)(d+1)$. Then the intersection $\conv X_1 \cap \dots \cap \conv X_r$ is necessarily empty, since
otherwise we could find a point $p$ of $P \setminus \bigcup_i X_i$ and an index $j \in [r]$ such that adding $p$ to $X_j$ would
not be a colorful partition, but $\conv X_1 \cap \dots \cap \conv X_r \ne \emptyset$. Finding point sets with this particular property
is much simpler than Lemma~\ref{lem:colorful}: we need point sets in strong general position, which is a generic property; see 
Perles and Sigron~\cite{perles2014}.

For point~\ref{item2} above we will use the following proper extensions of the topological Tverberg theorem:

\begin{theorem}[Hell~\cite{hell2008}]
\label{thm:hell}
	Let $r \ge 5$ be a prime power, $d \ge 1$ an integer, and $N =(r-1)(d+1)$. Let $G$ be a vertex-disjoint union of cycles 
	and paths on the vertex set of~$\Delta_N$. Then for any continuous map $f \colon \Delta_N \longrightarrow \R^d$ there are $r$
	pairwise disjoint faces $\sigma_1, \dots, \sigma_r$ of~$\Delta_N$ such that $f(\sigma_1) \cap \dots \cap f(\sigma_r) \ne \emptyset$
	and no $\sigma_i$ contains both endpoints of an edge of~$G$. The same result holds for $r=4$ if $G$ is a disjoint
	union of paths or even for $r =3$ if $G$ is a disjoint union of edges.
\end{theorem}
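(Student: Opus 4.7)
The plan is to work inside the configuration-space / test-map framework, encoding the constraint graph as a subcomplex of the source. Set $K_G \subseteq \Delta_N$ to be the independence complex of $G$, so that a Tverberg partition $\sigma_1,\ldots,\sigma_r$ of the desired form is exactly one in which every $\sigma_i$ is a face of $K_G$. If no such partition existed for some continuous $f\colon \Delta_N\to \R^d$, the usual test-map construction would yield a continuous $\Sym_r$-equivariant map
\[
(K_G)^{*r}_{\Delta}\;\longrightarrow\;S\bigl(W_r^{\oplus(d+1)}\bigr),
\]
where $(K_G)^{*r}_\Delta$ is the $r$-fold $2$-wise deleted join of $K_G$ and $W_r$ denotes the standard $(r-1)$-dimensional real representation of $\Sym_r$. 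The target sphere has dimension $(r-1)(d+1)-1=N-1$, so the whole problem reduces to ruling out such an equivariant map.

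For $r$ a prime power, which covers all three hypotheses $r\ge 5$, $r=4$, and $r=3$, the standard tool is equivariant obstruction theory applied to a Sylow $p$-subgroup $(\Z/p)^k \le \Sym_r$ (Volovikov's lemma, in the spirit of \"Ozaydin's argument): no such equivariant map can exist provided that $(K_G)^{*r}_\Delta$ is $(N-1)$-connected, at least over $\mathbb{F}_p$. The proof therefore reduces to a connectivity estimate for the deleted join of the constraint complex.

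To carry out this estimate I would exploit that $G$ splits into vertex-disjoint paths $P_{n_1},\ldots,P_{n_a}$ and cycles $C_{m_1},\ldots,C_{m_b}$ (with the remaining vertices isolated), so that the independence complex decomposes as a simplicial join,
\[
K_G\;=\;\mathrm{Ind}(P_{n_1})\ast\cdots\ast\mathrm{Ind}(P_{n_a})\ast\mathrm{Ind}(C_{m_1})\ast\cdots\ast\mathrm{Ind}(C_{m_b}),
\]
further joined with the simplex on the isolated vertices. Kozlov's classical computations identify each factor up to homotopy as either a point or a single sphere, with the dimension controlled by the residue of $n_i$ or $m_j$ modulo $3$. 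Plugging these into the standard connectivity inequality for $r$-fold deleted joins, the theorem reduces to an explicit arithmetic inequality in the parameters.

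The main obstacle is precisely this arithmetic bookkeeping at the boundary. The independence complex of a cycle $C_m$ (with $m \ge 3$) is never contractible, and replacing a path $P_{3k}$ (contractible) by $P_{3k\pm 1}$ (a sphere) likewise eats into the connectivity margin. That margin shrinks with $r$, and it is this loss that forces the exact hypotheses: paths and cycles for $r\ge 5$, paths only for $r=4$, matchings only for $r=3$. Making the case analysis robust across all compositions of $N+1$ into allowed component sizes, and confirming that Volovikov's hypotheses are met at each prime power $r$, is the technical heart of the argument.
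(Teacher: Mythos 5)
First, note that the paper does not prove Theorem~\ref{thm:hell} at all: it is imported verbatim from Hell's paper \emph{Tverberg's theorem with constraints}, so the only meaningful comparison is with Hell's own argument. Your overall architecture does match his: encode the constraints by the independence complex $K_G=\mathrm{Ind}(G)$, observe that $\mathrm{Ind}$ turns disjoint unions of components into simplicial joins and that the $2$-wise deleted join distributes over joins, build the usual test map $(K_G)^{*r}_{\Delta}\to S\bigl(W_r^{\oplus(d+1)}\bigr)$, and kill it with Volovikov's theorem (applied to the regular elementary abelian subgroup $(\Z/p)^k\le\Sym_r$, not literally a Sylow subgroup) once $(K_G)^{*r}_{\Delta}$ is shown to be $(N-1)$-connected. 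The join decomposition correctly reduces everything to showing that each component on $n_i$ vertices contributes a factor $(\mathrm{Ind}(G_i))^{*r}_{\Delta}$ that is $(n_i-2)$-connected.

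The genuine gap is in how you propose to establish that last connectivity bound. You want to take Kozlov's homotopy-type computations of $\mathrm{Ind}(P_n)$ and $\mathrm{Ind}(C_m)$ (a point or a single sphere) and feed them into a ``standard connectivity inequality for $r$-fold deleted joins.'' No such inequality exists: the connectivity of $K^{*r}_{\Delta}$ is not a function of the homotopy type (or even the connectivity) of $K$, because the deleted join depends on the face structure, not just the topology. Already the smallest case shows this is where all the content sits: for a single edge, $\mathrm{Ind}(K_2)$ is two points and $(\mathrm{Ind}(K_2))^{*r}_{\Delta}$ is $K_{r,r}$ minus a perfect matching, whose required $0$-connectivity holds exactly for $r\ge 3$ --- a fact you must compute directly, not deduce from $\mathrm{Ind}(K_2)\simeq S^0$. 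Hell's actual proof identifies $(\mathrm{Ind}(P_n))^{*r}_{\Delta}$ and $(\mathrm{Ind}(C_m))^{*r}_{\Delta}$ as chessboard-type complexes and proves connectivity bounds for these, extending the Bj\"orner--Lov\'asz--Vre\'cica--\v{Z}ivaljevi\'c connectivity results for chessboard complexes; it is precisely these bounds that produce the thresholds $r\ge 5$ for cycles, $r=4$ for paths, and $r=3$ for matchings. Without that ingredient your argument reduces the theorem to a statement that is not available, so the ``arithmetic bookkeeping'' you defer is in fact the missing proof.
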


\begin{theorem}[Blagojevi\'c, Matschke, and Ziegler~\cite{blagojevic2009}]
\label{thm:bmz}
	Let $r \ge 2$ be a prime, $d \ge 1$ an integer, and $N =(r-1)(d+1)$. Let $C_1 \sqcup \dots \sqcup C_m$ be a partition of the 
	vertex set of~$\Delta_N$ such that $|C_i| \le r-1$. Then for any continuous map $f \colon \Delta_N \longrightarrow \R^d$ there are $r$
	pairwise disjoint faces $\sigma_1, \dots, \sigma_r$ of~$\Delta_N$ such that $f(\sigma_1) \cap \dots \cap f(\sigma_r) \ne \emptyset$.
\end{theorem}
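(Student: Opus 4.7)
The plan is to employ the configuration space / test map paradigm. Suppose for contradiction that no Tverberg partition of the claimed type exists; I interpret the ``claimed type'' as the natural rainbow restriction that each $\sigma_i$ meets each color class $C_j$ in at most one vertex, since without this constraint the conclusion is already given by the topological Tverberg theorem of B\'ar\'any--Shlosman--Sz\H ucz. Let $K \subseteq \Delta_N$ be the subcomplex whose faces are exactly the rainbow sets, i.e., $K = C_1 \ast C_2 \ast \cdots \ast C_m$ as an iterated simplicial join of the color classes viewed as discrete $0$-dimensional complexes. If no $r$ pairwise disjoint faces of $K$ are $f$-identified, then $f$ induces a $\Z/r$-equivariant continuous map from the $2$-wise $r$-fold deleted join $K^{\ast r}_\Delta$ to the complement of the thin diagonal in $(\R^d)^r$, which $\Z/r$-equivariantly retracts onto the unit sphere $S(W_r^{\oplus(d+1)})$ of $(d+1)$ copies of the reduced regular representation $W_r$ of $\Z/r$. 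This sphere has dimension $(r-1)(d+1)-1 = N-1$.

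Next I would decompose the configuration space. Deleted joins distribute over simplicial joins, so there is a $\Z/r$-equivariant homeomorphism
\[
K^{\ast r}_\Delta \;\cong\; C_1^{\ast r}_\Delta \ast C_2^{\ast r}_\Delta \ast \cdots \ast C_m^{\ast r}_\Delta.
\]
For a discrete set $C_j$ on $c_j \le r-1$ vertices, $C_j^{\ast r}_\Delta$ is the chessboard complex $\Delta_{c_j,r}$, which by the connectivity results of Bj\"orner--Lov\'asz--Vre\'cica--\v Zivaljevi\'c is $(c_j-2)$-connected and, in the borderline regime, a wedge of $(c_j-1)$-spheres. The standard join-formula for connectivity then shows $K^{\ast r}_\Delta$ is $(N-2)$-connected, exactly one shy of the dimension of the target sphere.

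The crux is an equivariant obstruction argument. For prime $r$, I would compute the Fadell--Husseini ideal-valued index of $K^{\ast r}_\Delta$---equivalently, the pullback of the top mod-$r$ Euler class of $W_r^{\oplus(d+1)}$---and show it is nonzero, so no $\Z/r$-equivariant map $K^{\ast r}_\Delta \to S(W_r^{\oplus(d+1)})$ can exist. Primality of $r$ is indispensable here: the relevant obstruction lives in cohomology with $\Z/r$-coefficients and vanishes for groups that are not of prime-power order, mirroring the failure of the topological Tverberg theorem outside the prime-power range. The nonexistence of the equivariant map contradicts the assumption and yields the desired rainbow Tverberg partition.

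The main obstacle I anticipate is the sharp topological input at the borderline: one must certify that the join of the deleted chessboard complexes $\Delta_{c_j,r}$ realizes the necessary equivariant connectivity and carries a nontrivial top-dimensional equivariant cohomology class precisely in the critical range $|\bigcup C_j| = (r-1)(d+1)+1$. This is the technical heart of the Blagojevi\'c--Matschke--Ziegler argument; the remaining ingredients---assembling the equivariant test map, the deformation retraction to the representation sphere, and invoking the Dold-type theorem for free $\Z/r$-actions---are routine once the configuration space analysis is in place.
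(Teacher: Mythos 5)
The paper contains no proof of Theorem~\ref{thm:bmz}: it is imported verbatim from Blagojevi\'c, Matschke, and Ziegler as a black box, so there is no internal argument to compare yours against. You are right that the statement as printed omits the rainbow condition $|\sigma_i \cap C_j| \le 1$ (without it the conclusion is just the topological Tverberg theorem), and that this condition is what is actually used later in the paper, where the $\sigma_i$ are required to be faces of the join $C_1 * \cdots * C_m * \Delta_{t-1}$.

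Your proposed proof, however, has a fatal gap at the connectivity step. The chessboard complex $\Delta_{c,r}$ is \emph{not} $(c-2)$-connected for all $c \le r-1$: the Bj\"orner--Lov\'asz--Vre\'cica--\v Zivaljevi\'c bound is $(\nu-2)$-connectivity with $\nu = \min\{c,\, r,\, \lfloor (c+r+1)/3 \rfloor\}$, which for $c > (r+1)/2$ --- and in particular in the critical case $c = r-1$ --- is only about $\lfloor 2r/3 \rfloor - 2$, and this bound is known to be essentially sharp (Shareshian--Wachs; concretely, $\Delta_{3,4}$ is a torus, hence not simply connected, whereas your claim would make it $1$-connected). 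Consequently the join $\Delta_{c_1,r} * \cdots * \Delta_{c_m,r}$ is far from $(N-2)$-connected when the color classes are large, and neither a Dold-type theorem nor a Fadell--Husseini index computation (which is governed by the same connectivity) can exclude the equivariant map to $S(W_r^{\oplus(d+1)})$. This deficit is precisely why Vre\'cica and \v Zivaljevi\'c could only prove the colored Tverberg theorem for classes of size at most $2r-1$ (where $\nu = r$ and the chessboard complexes are maximally connected), and why the optimal bound remained open for two decades. The actual Blagojevi\'c--Matschke--Ziegler proof does not go through connectivity at all: after reducing to $d+1$ classes of size exactly $r-1$ plus one singleton, they run a relative equivariant obstruction-theory (degree) argument on the top cells of the $N$-dimensional complex $(\Delta_{r-1,r})^{*(d+1)} * [r]$, evaluating the primary obstruction cocycle by explicitly counting the colored Tverberg partitions of a concrete affine configuration and verifying that this count is nonzero modulo the prime $r$. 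That counting argument is the content of the theorem, not a routine afterthought; as it stands, your sketch asserts a false topological fact at the decisive step and the route it describes provably cannot be completed.
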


Finally, since these results come with restrictions on the number of prime divisors of~$r$, we need the following 
combinatorial reduction result:

\begin{lemma}[Alishahi and Hajiabolhassan~\cite{alishahi2015}]
\label{lem:primes}
	Let $r$, $s$ and $p$ be positive integers where $r \ge s \ge 2$ and $p$ is a prime number. 
	Assume that for any $n \ge rk$, $\chi(\KG^r(k,n)_{s-\textrm{stab}}) = \lceil \frac{n-r(k-1)}{r-1} \rceil$. 
	Then for any $n \ge prk$, we have $\chi(\KG^{pr}(k,n)_{s-\textrm{stab}}) = \lceil \frac{n-pr(k-1)}{pr-1} \rceil$.
\end{lemma}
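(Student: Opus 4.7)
The plan is to prove matching upper and lower bounds on $\chi(\KG^{pr}(k,n)_{s-\textrm{stab}})$. The upper bound $\chi(\KG^{pr}(k,n)_{s-\textrm{stab}}) \le \lceil \frac{n-pr(k-1)}{pr-1}\rceil$ follows from the standard greedy coloring: partition $[n]$ into consecutive windows of length $pr-1$ (with the last absorbing any residue), and color an $s$-stable $k$-set $\sigma$ by the index of the first window meeting $\sigma$. Since any $pr$ pairwise disjoint $k$-sets have $pr$ distinct minima that cannot all fit in a single window of length $pr-1$, no $pr$-edge is monochromatic.

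For the lower bound, I argue by contradiction: suppose a proper $C$-coloring $c$ of $\KG^{pr}(k,n)_{s-\textrm{stab}}$ exists with $C < \lceil \frac{n-pr(k-1)}{pr-1}\rceil$. Set $n^* = (r-1)C + r(k-1) + 1$; the standing hypothesis then gives $\chi(\KG^r(k,n^*)_{s-\textrm{stab}}) = C+1$. A short arithmetic check using $n \ge (pr-1)C + pr(k-1) + 1$ yields $n \ge p\, n^*$ (indeed $n - p n^* \ge (p-1)(C-1)$), so in particular $\lfloor n/p \rfloor \ge n^*$. It therefore suffices to manufacture from $c$ a proper $C$-coloring $c^*$ of $\KG^r(k,n^*)_{s-\textrm{stab}}$, which immediately contradicts $\chi = C+1$.

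The central construction lifts each $s$-stable $k$-set $\tau \subseteq [n^*]$ to a family $\tau = \tau^{(0)}, \tau^{(1)}, \ldots, \tau^{(p-1)}$ of $p$ pairwise disjoint $s$-stable $k$-sets in $[n]$, realized as cyclic translates by multiples of $\lfloor n/p \rfloor$. Because $\lfloor n/p\rfloor \ge n^*$, these translates lie inside $[n]$, are pairwise disjoint, and each remains $s$-stable in the cyclic arrangement of $[n]$. Consequently, $r$ pairwise disjoint $s$-stable $k$-sets $\tau_1, \ldots, \tau_r$ in $[n^*]$ lift to $pr$ pairwise disjoint $s$-stable $k$-sets $\{\tau_i^{(j)}\}_{i,j}$ in $[n]$. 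If one could arrange that the $p$ translates of each $\tau$ share a common color under $c$, then defining $c^*(\tau) := c(\tau^{(0)})$ would yield a proper coloring: a monochromatic $r$-edge in $c^*$ would lift to a monochromatic $pr$-edge in $c$, contradicting properness.

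The main obstacle is securing this color compatibility, since $c$ need not be shift-invariant. This is precisely where the primality of $p$ enters: I would consider the $p$ shifted colorings $c_t(\sigma) := c(\sigma + t\lfloor n/p\rfloor)$, each itself proper for $\KG^{pr}(k,n)_{s-\textrm{stab}}$, and extract a $\mathbb{Z}/p$-invariant refinement by a pigeonhole or modular averaging argument on the orbits of the cyclic shift action. Primality of $p$ ensures each nontrivial orbit has size exactly $p$ and that the averaging modulo $p$ behaves cleanly; for composite $p$ the reduction can genuinely fail, which is why the prime-by-prime iteration in the later applications is essential.
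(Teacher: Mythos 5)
First, note that the paper itself does not prove Lemma~\ref{lem:primes}; it cites Alishahi and Hajiabolhassan, whose proof is the Alon--Frankl--Lov\'asz two-level coloring reduction. Measured against that, your lower-bound argument has a genuine gap at its central step, and the gap is not repairable within your framework. You correctly observe that your scheme needs the $p$ translates $\tau^{(0)},\dots,\tau^{(p-1)}$ of each $\tau$ to receive a common color under $c$, but a proper coloring of a $pr$-uniform hypergraph carries no such shift-compatibility, and the proposed remedy (``pigeonhole or modular averaging on orbits of the cyclic shift'') is not an argument: selecting one representative color per orbit only ever certifies $r$ pairwise disjoint monochromatic sets (one translate per $\tau_i$), which does not violate properness of a $pr$-uniform coloring. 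The obstruction is structural: the only monochromatic configurations your construction could ever exhibit are the translate-closed families $\{\tau_i + j\lfloor n/p\rfloor : i\in[r],\ j\in\{0,\dots,p-1\}\}$, and the property ``no translate-closed family is monochromatic'' is achievable with just two colors (color each set by the parity of the block of length $\lfloor n/p\rfloor$ containing its minimum; every translate-closed family meets blocks $0$ and $1$). Hence no argument that uses only this property of $c$ can force $C+1$ colors for $C\ge 2$. Primality of $p$ also plays no orbit-theoretic role here; in the cited proof it matters only because the lemma is applied by iterating over the prime factors of the uniformity.

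The actual proof runs in the opposite direction: instead of translating small sets apart, one nests them. Given a proper $C$-coloring $c$ of $\KG^{pr}(k,n)_{s-\textrm{stab}}$, set $K=(r-1)C+r(k-1)+1$. For every $K$-subset $A\subset[n]$, the hypothesis applied to $A$ with its induced cyclic order yields, since $\lceil\frac{K-r(k-1)}{r-1}\rceil=C+1>C$, some $r$ pairwise disjoint $k$-subsets of $A$ of a common $c$-color that are $s$-stable in $A$ and hence $s$-stable in $[n]$ (gaps can only grow when passing from $A$ to $[n]$); call that color $\bar c(A)$. If $p$ pairwise disjoint $K$-sets shared a $\bar c$-color, one would obtain $pr$ pairwise disjoint monochromatic $s$-stable $k$-sets, so $\bar c$ is a proper $C$-coloring of $\KG^{p}(K,n)$, and Theorem~\ref{thm:afl} gives $C\ge\lceil\frac{n-p(K-1)}{p-1}\rceil$, which unwinds to $C\ge\lceil\frac{n-pr(k-1)}{pr-1}\rceil$. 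The outer application of the Kneser-type theorem is precisely what supplies the cross-group color agreement that your translation construction cannot. (Your greedy upper bound is essentially fine, except that the last, longer window must be handled by noting it has fewer than $prk$ elements rather than by the distinct-minima count.)
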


This allows us to prove:

\begin{theorem}
\label{thm:stable}
	For any integers $r \ge 2$, $k\ge 1$, $n \ge rk$ we have that $$\chi(\KG^r(k,n)_{2-\textrm{stab}}) = \Big\lceil \frac{n-r(k-1)}{r-1} \Big\rceil.$$
\end{theorem}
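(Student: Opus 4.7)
The upper bound $\chi(\KG^r(k,n)_{2-\textrm{stab}}) \le \lceil \frac{n-r(k-1)}{r-1}\rceil$ is immediate from Theorem~\ref{thm:afl}, since $\KG^r(k,n)_{2-\textrm{stab}}$ is an induced subhypergraph of $\KG^r(k,n)$. So the task is to establish the matching lower bound, and by Lemma~\ref{lem:primes} it suffices to do so when $r$ is prime.

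The plan is to apply Theorem~\ref{thm:transversality} to a carefully chosen pair $K \subseteq L$ of simplicial complexes with $\KG^r(K, L) = \KG^r(k,n)_{2-\textrm{stab}}$. Set $d = \lceil \frac{r(k-2)+1}{r-1}\rceil$, $D = \lceil \frac{n-1}{r-1}\rceil$, and pick an integer $t$ so that $r(k-1) + t = (r-1)(d+1)$, as in the proof of Theorem~\ref{thm:afl}. Starting from $\Delta_{n-1}^{(k-2)} * \Delta_{t-1}$, I build $K$ on vertex set $[n+t]$ by additionally declaring every non-stable $k$-subset of $[n]$ (non-stability understood cyclically) to be a face; this leaves the stable $k$-subsets of $[n]$ as the only missing faces of interest. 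The ambient complex $L$ is chosen large enough that the topological Tverberg hypothesis applies (say, containing a full $(r-1)D$-dimensional subsimplex), while still satisfying $\KG^r(K, L) = \KG^r(k,n)_{2-\textrm{stab}}$.

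The crucial step is producing an affine map $f\colon K \to \R^d$ with no $r$-fold coincidence on pairwise disjoint faces of $K$. I would place the vertices of $[n]$ along a sufficiently stretched moment curve in $\R^d$ and the remaining $t$ vertices in strong general position. A dimension count shows that any such $r$-fold coincidence requires a supporting vertex set of size at least $(r-1)(d+1)+1$, so by the choice of $t$ at least one $\sigma_j$ must be a full non-stable $k$-subset of $[n]$. Lemma~\ref{lem:colorful} then forces the supporting partition to be colorful, and in any colorful partition no part contains two indices from a single consecutive block $Y_k$; in particular no part contains a non-cyclic consecutive pair $\{i, i+1\}$ with $i < n$, which already rules out most non-stable $k$-subsets.

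The hardest point, which I expect to require genuine care, is the one remaining cyclic pair $\{n, 1\}$: since $v_1$ and $v_n$ lie at opposite ends of the moment curve, the colorful argument does not automatically forbid a part containing both. I plan to handle this by a modest structural adjustment, for instance cyclically reindexing the $n$ vertices along the curve, or enlarging $K$ with a carefully chosen auxiliary face in the $\Delta_{t-1}$-part that forces the pair $\{n,1\}$ into a common block of any colorful partition. Once $f$ is constructed, the topological Tverberg theorem guarantees that every continuous map $L \to \R^{D-1}$ has $r$ pairwise disjoint faces with a common image point. Applying Theorem~\ref{thm:transversality} with $c = D-1-d$ then yields $\chi(\KG^r(K, L)) \ge D - d$, and a short arithmetic check confirms $D - d = \lceil \frac{n - r(k-1)}{r-1}\rceil$, completing the proof.
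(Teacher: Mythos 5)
Your overall architecture matches the paper's: reduce to prime $r$ via Lemma~\ref{lem:primes}, place vertices on a stretched moment curve so that Lemma~\ref{lem:colorful} controls the Tverberg partitions, and feed the resulting coincidence-free map into Theorem~\ref{thm:transversality} with the same arithmetic of $d$, $t$, and $D$. But the point you flag as ``the hardest'' is a genuine gap, and neither of your proposed fixes can close it. The colorful-partition condition only forbids a part from containing two positions that are close \emph{along the curve}; since the cycle $C_n$ has $n$ edges and any linear arrangement of $[n]$ along the curve realizes at most $n-1$ of them as adjacent positions, at least one cyclically consecutive pair always lands far apart --- cyclic reindexing merely moves the bad pair, it never removes it. Your second idea is also not viable: which partitions occur as Tverberg partitions is a property of the point configuration alone (Lemma~\ref{lem:colorful}), so enlarging $K$ by auxiliary faces cannot ``force'' $1$ and $n$ into or out of a common block; worse, since your $K$ already contains the non-stable $k$-sets as faces, a part equal to a $k$-set containing $\{1,n\}$ with no linearly consecutive pair is a legitimate face of $K$, and the desired contradiction simply does not arise. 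Adding more faces to $K$ only makes it easier, not harder, for such a part to be a face.

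The paper resolves this on the other side of Theorem~\ref{thm:transversality}, which your proposal does not consider: you take $L$ large enough for the \emph{plain} topological Tverberg theorem, which is exactly what forces you to handle $\{1,n\}$ inside $K$, where it cannot be handled. Instead, let $K$ have as missing faces the \emph{almost} $2$-stable $k$-sets (linear order only, so sets containing $\{1,n\}$ remain missing faces and the colorful argument goes through cleanly), and take $L$ to be the simplex with the single edge $\{1,n\}$ deleted. Then the missing faces of $K$ lying in $L$ are precisely the cyclically $2$-stable $k$-sets, so $\KG^r(K,L)=\KG^r(k,n)_{2-\textrm{stab}}$, and the Tverberg-type hypothesis on $L$ is supplied not by the plain topological Tverberg theorem but by Hell's constrained version (Theorem~\ref{thm:hell}), which guarantees a Tverberg point with no part containing both endpoints of the deleted edge. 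This extra tool is the missing ingredient. A secondary issue: you place the $t$ auxiliary vertices ``in strong general position'' off the curve, which invalidates the appeal to Lemma~\ref{lem:colorful}; all $n+t$ points should lie on the stretched moment curve.
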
 

\begin{proof}
	According to Lemma~\ref{lem:primes} we can from now on assume that $r$ is a prime. We can also assume that $r \ge 3$,
	since the $r=2$ case is due to Schrijver~\cite{schrijver1978}. (In fact, the proof at hand works with minor changes for $r=2$;
	see~\cite{frick2017}.)
	Let $K$ be the simplicial complex on vertex set~$[n]$ defined by: $\sigma \subset [n]$ is a missing face of~$K$
	if and only if $|\sigma| = k$ and if $i \in \sigma$ for some $i \in [n-1]$ then $i+1\notin \sigma$, that is, the almost
	$2$-stable $k$-sets are the missing faces of~$K$.
	Choose the integer $d$ such that 
	$r(k - 1) - 1 \ge (r - 1)d > r(k - 2)$, that is, $d$ is the largest integer that satisfies $d \le \frac{r(k-1)-1}{r-1}$. Let
	$t = (r-1)d-r(k-2)-1$. Let $p_1, \dots, p_{n+t} \in \R^d$ be a sequence of points provided by Lemma~\ref{lem:colorful}.
	Now consider the complex $\overline K = K * \Delta_{t-1}$ for $t > 0$ or $\overline K = K$ if~${t = 0}$. Map the vertices 
	$[n]$ of $K$ bijectively to $p_1, \dots, p_n$ in the canonical order. Map the $t$ vertices of $\Delta_{t-1}$ to~$p_{n+1},
	\dots, p_{n+t}$ in arbitrary order. Extending linearly onto faces yields an affine map $f\colon \overline K \longrightarrow \R^d$.
	In fact, let us from here on not distinguish between the vertices of $\overline K$ and their images $p_1, \dots, p_{n+t}$.
	
	Let $\sigma_1, \dots, \sigma_r$ be pairwise disjoint faces of~$\overline K$ and suppose that $f(\sigma_1) \cap
	\dots \cap f(\sigma_r) \ne \emptyset$. W.l.o.g. the faces $\sigma_1, \dots, \sigma_r$ involve precisely $(r-1)(d+1)+1$ vertices
	by a codimension count. We now argue that for one of the faces $\sigma_i$ the intersection
	$\sigma_i \cap \{p_1, \dots, p_n\}$ has cardinality at least~$k$. 
	Indeed, if all intersections $\sigma_i \cap \{p_1, \dots, p_n\}$ had cardinality at most~$k-1$, then the 
	$\sigma_i$ would involve at most $r(k-1)+t = (r-1)(d+1)$ points, which is a contradiction to $f(\sigma_1) \cap
	\dots \cap f(\sigma_r) \ne \emptyset$.
	Thus one of the faces $\sigma_j$ satisfies $\sigma_j \cap \{p_1, \dots, p_n\} \ge k$. 
	If the intersection $f(\sigma_1) \cap \dots \cap f(\sigma_r)$ were nonempty, then by Lemma~\ref{lem:colorful} the
	partition $\sigma_1 \sqcup \dots \sqcup\sigma_r$ of the $(r-1)(d+1)+1$ points $\bigcup_i \sigma_i$ would be
	colorful. In particular, no two vertices of $\sigma_i$ can be successive in the linear order of~$[n+t]$. But then
	$\sigma_j$ cannot be a face of~$\overline K$, which is a contradiction. Hence the intersection
	$f(\sigma_1) \cap \dots \cap f(\sigma_r)$ is empty.
	
	Denote the vertex set of $\Delta_{t-1}$ by $v_1, \dots, v_t$. Let $L$ be the simplicial complex obtained by
	deleting the edge $(1,n)$ from the simplex on vertex set~${[n] \cup \{v_1, \dots, v_t\}}$. Let 
	$D = \lfloor \frac{n+t-1}{r-1} \rfloor$. Since $(r-1)D+1 \le n+t$, we have by Theorem~\ref{thm:hell} that for any continuous map 
	$F\colon L \longrightarrow \R^{D-1}$ there are $r$ points in $r$ pairwise disjoint faces of~$L$ whose images coincide. 
	By Theorem~\ref{thm:transversality} we have that $\chi(\KG^r(\overline K,L)) \ge D-d$. Indeed, if $\chi(\KG^r(\overline K,L)) \le D-1-d$, then
	Theorem~\ref{thm:transversality} would imply that $f$ identifies $r$ points from $r$ pairwise disjoint faces. 
	Now $D = \frac{n+t-1 - \alpha}{r-1}$ for some $0 \le \alpha \le r-2$, and $d = \frac{r(k-2)+t+1}{r-1}$. Thus
	$$\chi(\KG^r(\overline K,L)) \ge  \frac{n+t-1 - \alpha}{r-1} - \frac{r(k-2)+t+1}{r-1} = \frac{n-r(k-2)-(\alpha+2)}{r-1}
	\ge \frac{n-r(k-1)}{r-1}.$$
	
	The missing faces of $\overline K$ that are faces of~$L$ are precisely the $2$-stable $k$-sets in~$[n]$. Thus the
	hypergraph $\KG^r(\overline K, L)$ is~$\KG^r(k,n)_{2-\textrm{stab}}$.
\end{proof}

\begin{remark}
	For the $r=2$ case (that is, Kneser graphs) Meunier~\cite{meunier2011} showed that for $n \ge sk$ the $s$-stable 
	Kneser graph $\KG^2(k,n)_{s-\textrm{stab}}$ can be properly colored with $n -s(k-1)$ colors and conjectured that 
	this is in fact the chromatic number. This was confirmed by Jonsson~\cite{jonsson2012} for $s \ge 4$ and $n$ 
	sufficiently large, and by Chen~\cite{chen2015} for even $s$ and any~$n$.
\end{remark}

We can extend the reasoning above by using the full generality of Theorem~\ref{thm:hell}. If, in addition, we are somewhat more 
careful how to order the points $p_1, \dots, p_{n+t}$ we obtain a result for $3$-stable Kneser hypergraphs:

\begin{theorem}
\label{thm:almost-3-stable}
	Let $r\ge 4$, $r \notin \{6,12\}$, $k\ge 1$, and $n \ge rk$ be integers, and if $r = 4$ let either $n$ be even or $k \not\equiv 2 \mod 3$. 
	Then $$\chi(\KG^r(k,n)_{3-\textrm{stab}}) = \Big\lceil \frac{n-r(k-1)}{r-1} \Big\rceil.$$
\end{theorem}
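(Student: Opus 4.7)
The plan is to mirror the proof of Theorem~\ref{thm:stable} (the $2$-stable case) with two modifications tailored to $3$-stability. First I would apply Lemma~\ref{lem:primes} to reduce to the case where $r$ is a prime power; the exclusion $r \notin \{6, 12\}$ is exactly what makes this reduction possible, since $6 = 2 \cdot 3$ and $12 = 3 \cdot 4$ do not factor through any smaller prime power for which our topological method yields the $3$-stable bound (there is no workable base case at $r = 2$ or $r = 3$ with $s = 3$). Hence we may assume from now on that $r \ge 4$ is a prime power.

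I would then set up $K$ on vertex set $[n]$ whose missing faces are the almost-$3$-stable $k$-sets (those $k$-subsets of $[n]$ containing no two elements at linear distance $\le 2$), choose $d$ maximal with $(r-1)d \le r(k-1) - 1$, set $t = (r-1)d - r(k-2) - 1$, and form $\overline K = K * \Delta_{t-1}$. The affine map $f\colon \overline K \to \R^d$ would send the $n+t$ vertices to a stretched moment-curve sequence provided by Lemma~\ref{lem:colorful}, but now the $t$ extra vertices $v_1, \dots, v_t$ must be interleaved among $[n]$ in a carefully chosen order rather than merely appended at the end. The ambient complex $L$ would be the simplex on $[n] \cup \{v_1, \dots, v_t\}$ with the three cyclically close edges $\{1, n-1\}, \{1, n\}, \{2, n\}$ deleted, so that the missing faces of $\overline K$ that lie in $L$ are exactly the $3$-stable $k$-subsets of $[n]$; consequently $\KG^r(\overline K, L) = \KG^r(k, n)_{3-\textrm{stab}}$.

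To invoke Theorem~\ref{thm:transversality} I would need to check that (i) every continuous map $L \to \R^{D-1}$, with $D = \lfloor (n+t-1)/(r-1)\rfloor$, Tverberg-identifies $r$ pairwise disjoint faces, and (ii) the map $f$ identifies no such $r$-tuple in $\overline K$. For (i) the complement of $L$ inside the full simplex is the graph on $\{1, 2, n-1, n\}$ with edge set $\{\{1, n-1\}, \{1, n\}, \{2, n\}\}$, which is a single path with vertex sequence $n-1, 1, n, 2$, and therefore a vertex-disjoint union of paths. Theorem~\ref{thm:hell} then applies for every prime power $r \ge 4$, which is why the theorem assumes $r \ge 4$ (for $r = 3$ only disjoint unions of edges are permitted in Hell's theorem, and a path on four vertices is not of this form).

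The hard part, and the source of the extra side conditions on $r = 4$, is step (ii). If $f$ did identify $r$ pairwise disjoint faces, Lemma~\ref{lem:colorful} would force the induced partition of the $(r-1)(d+1) + 1$ involved vertices to be colorful, and by the usual codimension count some face $\sigma_j$ would have $|\sigma_j \cap [n]| \ge k$; I would then aim to conclude that $\sigma_j \cap [n]$ contains an almost-$3$-stable $k$-subset, contradicting $\sigma_j \in \overline K$. Colorfulness already rules out adjacent $[n]$-vertices in $\sigma_j$; to also rule out $[n]$-pairs at linear distance $2$, the interleaving of the $v_i$ must be chosen so that every triple $\{i, i+1, i+2\} \subset [n]$ lands inside a common size-$r$ block of the full ordering, which forces any colorful partition to place $i$ and $i+2$ in different classes. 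For $r \ge 5$ the block size is at least $5$ and this can be arranged for all $n$ and $k$; for $r = 4$ the block size is exactly $4$ and the number of extra vertices $t \in \{0, 1, 2\}$ is determined by $k \bmod 3$, so a direct residue analysis shows that the required interleaving exists exactly under the stated hypothesis $k \not\equiv 2 \pmod 3$ or $n$ even. The lower bound $\chi(\KG^r(k,n)_{3-\textrm{stab}}) \ge \lceil (n - r(k-1))/(r-1)\rceil$ then follows exactly as in the proof of Theorem~\ref{thm:stable}, and is matched by the greedy coloring inherited from $\KG^r(k, n)$.
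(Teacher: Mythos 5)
Your overall skeleton --- reduction to prime powers via Lemma~\ref{lem:primes}, a complex handled geometrically via Lemma~\ref{lem:colorful}, an ambient complex $L$ handled via Theorem~\ref{thm:hell}, glued by Theorem~\ref{thm:transversality} --- is the right one, and your $L$-side is fine: the three wrap-around edges form a path, so Hell's theorem applies for every prime power $r \ge 4$. The genuine gap is in your step (ii), where you assign \emph{all} linear distance-$2$ constraints to the geometric side. The partitions in Lemma~\ref{lem:colorful} are colorful with respect to the blocks $Y_k$ of the chosen \emph{subsequence} of length $(r-1)(d+1)+1$, not of the full ordering of the $n+t$ points. Colorfulness separates two elements of a part only when they lie in a common block of the subsequence; two points at distance $2$ in the full order whose middle point is also selected sit at subsequence distance $2$, and positions $j$ and $j+2$ fail to share a block exactly when $j+1$ is a block-overlap position $(r-1)k+1$. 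Since the subsequence is proper (one checks $n+t > (r-1)(d+1)+1$ for $n \ge rk$), the adversary may choose how many selected points precede a given interior pair $\{i,i+2\}$, place that pair astride a block boundary, and put $i$ and $i+2$ in the same part; by Lemma~\ref{lem:colorful} this colorful partition actually occurs as a Tverberg partition, and it can be arranged so that the part containing $\{i,i+2\}$ has exactly $k$ elements of $[n]$ (hence contains no almost-$3$-stable $k$-subset and is a face of your $\overline K$) while the other parts have $k-1$ elements of $[n]$ each. So your map $f$ does identify $r$ points from $r$ pairwise disjoint faces, and no interleaving of the $t \le r-2$ dummy vertices can prevent this, because the block boundaries are not determined by the full ordering. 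This also undercuts your derivation of the $r=4$ side conditions, which you attribute to a residue analysis of an interleaving that cannot do the required work.

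The repair, which is what the paper's proof does, is to let the geometric argument handle only the distance-$2$ pairs at the two extremes of the linear order, namely $\{1,3\}$, $\{2,4\}$, and $\{n-2,n\}$: a selected point among the first (resp.\ last) $r$ points of the full sequence necessarily occupies one of the first (resp.\ last) $r$ positions of the subsequence and hence lies in the first (resp.\ last) block, so each part meets each of these extreme windows at most once; splitting the $t$ dummies between the two ends and requiring $4+\lfloor t/2\rfloor \le r$ puts vertices $1,\dots,4$ and $n-2,\dots,n$ inside those windows. All remaining edges $\{i,i+2\}$ with $3 \le i \le n-3$ are instead deleted from $L$ together with the wrap-around edges, and omitting $\{1,3\},\{2,4\},\{n-2,n\}$ from that deletion list is precisely what breaks the constraint graph into a disjoint union of paths so that Theorem~\ref{thm:hell} applies. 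The $r=4$ hypotheses then arise from $4+\lfloor t/2\rfloor\le 4$, i.e.\ $t\le 1$, equivalently $k\not\equiv 2 \pmod 3$, with the residual case $t=2$ rescued for even $n$ by moving $\{2,4\}$ into the constraint graph without creating a cycle.
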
 

\begin{proof}
	We follow the same steps as in the proof of Theorem~\ref{thm:stable}. First let $r \ge 5$ be a prime power. 
	Recall that $K$ was defined as the 
	simplicial complex on vertex set~$[n]$ whose missing faces are precisely the almost $2$-stable $k$-subsets of~$[n]$. 
	Let $\widehat K$ be the simplicial complex on vertex set~$[n]$ obtained from $K$ by also adding any subset
	$\sigma \subset [n]$ of cardinality at least~$k$ and with $\{1,3\} \subset \sigma$, or $\{2,4\} \subset \sigma$, 
	or $\{n-2,n\} \subset \sigma$ to $K$ as a face. With $t$ defined as before (i.e., integer $d$ satisfies 
	$r(k - 1) - 1 \ge (r - 1)d > r(k - 2)$  and $t = (r-1)d-r(k-2)-1$) let $\overline K = \widehat K * \Delta_{t-1}$,
	where we identify the vertex set of $\Delta_{t-1}$ with $\{n+1, n+2, \dots, n+t\}$.
	
	Let $p_1, \dots, p_{n+t} \in \R^d$ be a sequence of points provided by Lemma~\ref{lem:colorful}. Call the first
	$\lfloor \frac{t}{2} \rfloor$ points $q_1, \dots, q_{\lfloor t/2 \rfloor}$, and the last $\lceil \frac{t}{2} \rceil$ points
	$q_{\lfloor t/2 \rfloor +1}, \dots, q_t$. This leaves $n$ points of $p_1, \dots, p_{n+t}$ in the middle that we will
	denote $p'_1, \dots, p'_n$. Now define the facewise linear map $f \colon \overline K \longrightarrow \R^d$ 
	by mapping vertex $i \in [n]$ to~$p'_i$ and vertex $i \in \{n+1, n+2, \dots, n+t\}$ to $q_{i-n}$. Let us again not
	distinguish between vertices of $\overline K$ and their image points in~$\R^d$.
	
	Let $\sigma_1, \dots, \sigma_r$ be pairwise disjoint faces of~$\overline K$ and suppose that $f(\sigma_1) \cap
	\dots \cap f(\sigma_r) \ne \emptyset$. Let us again assume w.l.o.g. that $|\bigcup_i \sigma_i| = (r-1)(d+1)+1$.
	As before one of the faces $\sigma_j$ satisfies that $\sigma_j \cap \{p'_1, \dots, p'_n\}$ has cardinality at least~$k$.
	Now since $\sigma_1 \sqcup \dots \sqcup \sigma_r$ is a colorful partition of those points in $p_1, \dots, p_{n+t}$ that occur 
	in one $\sigma_i$ the face $\sigma_j$
	cannot contain two successive elements of~$[n]$. It can also contain at most one element within the first $r$ points
	$p_1, \dots, p_r$ and at most one element within the last $r$ points $p_{n+t-r+1}, \dots, p_{n+t}$ by definition of
	colorful partition. Since $t \le r-2$ we have that $\{p'_1, \dots, p'_4\} \subset \{p_1, \dots, p_r\}$ -- that is, 
	$4+\lfloor \frac{t}{2} \rfloor \le r$ since $r \ge 5$ -- and $\{p'_{n-2}, p'_n\} \subset \{p_{n+t-r+1}, \dots, p_{n+t}\}$. 
	Thus, $\{1,3\} \nsubseteq \sigma_j$, $\{2,4\} \nsubseteq \sigma_j$, and $\{n-2,n\} \nsubseteq \sigma_j$, so $\sigma_j$
	is a missing face of $\overline K$ --- a contradiction.
	
	Let $G$ be the graph on vertex set $[n+t]$ with edges $\{3,5\}, \{4,6\}, \dots, \{n-3,n-1\}$ and $\{1,n\}, \{2,n\}$, $\{1, n-1\}$.
	In particular, $G$ is a disjoint union of paths. Let $L$ be the simplicial complex obtained from the simplex on vertex
	set $[n+t]$ by deleting any edge in~$G$. Now as in the proof of Theorem~\ref{thm:stable} and using Theorem~\ref{thm:hell}
	we get that any continuous map $F\colon L \longrightarrow \R^{D-1}$ identifies $r$ points from $r$ pairwise disjoint faces
	for $D = \lfloor \frac{n+t-1}{r-1} \rfloor$. Then as before 
	$$\chi(\KG^r(\overline K,L)) \ge \frac{n-r(k-1)}{r-1}.$$
	
	Now observe that $\KG^r(\overline K,L) = \KG^r(k,n)_{3-\textrm{stab}}$. The same proof works for $r = 4$ if $4+\lfloor \frac{t}{2} \rfloor \le 4$,
	or equivalently $t = 3d-4(k-2)-1 \le 1$, which is true unless $3d = 4(k-1)-1$, that is, unless $k \equiv 2 \mod 3$. If $r = 4$ and 
	$t = 2$, then for even $n$ the following modification of the proof still works: add only those $\sigma \subset [n]$ to $K$ that
	contain $\{1,3\}$ or $\{n-2,n\}$ to obtain~$\widehat K$. We then have to exclude faces containing $\{2,4\}$ via the graph~$G$,
	that is, we add the edge $\{2,4\}$ to~$G$. If $n$ is even $G$ is a path, so we can still apply Theorem~\ref{thm:hell}. 
	The case of general $r$ now follows by an induction using Lemma~\ref{lem:primes}.
\end{proof}

Our methods also yield an approximation to the following conjecture of Ziegler~\cite{ziegler2002} and Alon, Drewnowski, 
and \L uczak~\cite{alon2009}:

\begin{conjecture}
\label{conj:stable}
	For $n \ge rk$ we have that $$\chi(\KG^r(k,n)_{r-\textrm{stab}}) = \Big\lceil \frac{n-r(k-1)}{r-1} \Big\rceil.$$
\end{conjecture}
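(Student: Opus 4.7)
The plan is to adapt the $\overline K, L$ pair framework from Theorems~\ref{thm:stable} and~\ref{thm:almost-3-stable} to the setting in which the missing faces of $\overline K$ lying in $L$ are exactly the $r$-stable $k$-subsets of $[n]$. Choose $d$ to be the largest integer with $(r-1)d \le r(k-1)-1$, set $t = (r-1)d - r(k-2) - 1$, and let $p_1, \dots, p_{n+t} \in \R^d$ be a sufficiently long sequence produced by Lemma~\ref{lem:colorful}. Reserve $\lfloor t/2 \rfloor$ points on each end as buffer vertices identified with $\Delta_{t-1}$, assign the middle $n$ points to $[n]$ in order, and extend facewise-linearly to an affine map $f \colon \overline K \to \R^d$. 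The complex $\widehat K$ should start from the complex whose missing faces are the almost $r$-stable $k$-sets, and then re-add every $k$-subset containing two elements at cyclic distance less than $r$ that involves one of the first or last $r$ indices, so that wrap-around constraints can be offloaded to a forbidden-edge graph $G$ on $[n+t]$ defining $L$.

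For the transversality side, $G$ must forbid every pair $\{i,j\}$ with cyclic distance strictly less than $r$ along with the artificial wrap-around edges, so that any transversality result applied to $L$ yields $\chi(\KG^r(\overline K, L)) \ge D-d$ with $D = \lfloor (n+t-1)/(r-1) \rfloor$; an arithmetic identical to the proof of Theorem~\ref{thm:stable} would then reduce this to $\lceil \frac{n-r(k-1)}{r-1} \rceil$. For $r$ prime, however, the relevant $G$ is essentially the $(r-1)$st power $C_n^{r-1}$ of the $n$-cycle, which is neither a vertex-disjoint union of cycles and paths (as required by Theorem~\ref{thm:hell}) nor coarser than a partition into blocks of size at most $r-1$ (as required by Theorem~\ref{thm:bmz}). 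An auxiliary step, therefore, would be to establish a shared strengthening of Theorems~\ref{thm:hell} and~\ref{thm:bmz} that tolerates constraint graphs as dense as $C_n^{r-1}$; this appears to require a genuinely new equivariant obstruction computation rather than a black-box application of either theorem.

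The main obstacle, however, is on the combinatorial-geometry side and concerns Lemma~\ref{lem:colorful}. Two indices belonging to the same part of a colorful partition can straddle the interface between adjacent blocks $Y_k, Y_{k+1}$ of size $r$, so that consecutive elements within a single part $\sigma_j$ can lie at linear distance as small as~$2$ along the moment curve. Consequently, even if some $\sigma_j$ contains at least $k$ vertices from $[n]$, nothing in Lemma~\ref{lem:colorful} prevents those vertices from forming a merely $2$-stable (rather than $r$-stable) $k$-set, and hence from being a \emph{face} of $\overline K$ rather than a missing face, breaking the contradiction that drove the earlier proofs. Closing this gap would require either a strengthening of Lemma~\ref{lem:colorful} producing Tverberg partitions with minimum within-part gap $\ge r$ (perhaps by placing points along a higher-degree polynomial curve and analyzing their Gale transforms), or a wholly different geometric transversality principle adapted to the cyclic-gap constraint. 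I expect this to be the central difficulty: the colorful partition structure enforced by the stretched moment curve is intrinsically a $\bmod\, (r-1)$ phenomenon, whereas $r$-stability is a $\bmod\, r$ condition, and upgrading the former to the latter seems to lie beyond a cosmetic refinement of the proof of Theorem~\ref{thm:almost-3-stable}.
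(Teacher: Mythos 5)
The statement you were asked to prove is Conjecture~\ref{conj:stable}, which is an open conjecture of Ziegler and of Alon, Drewnowski, and {\L}uczak; the paper does not prove it and explicitly presents it as unresolved, offering only approximations (the case $r=2$ by Schrijver, $r$ a power of two via the combinatorial reduction of Alon--Drewnowski--{\L}uczak, and Corollary~\ref{cor:s-stable}, which establishes the $s$-stable bound only for prime powers $r > 6s-6$ --- a condition that is vacuous when $s=r$). Your submission is accordingly not a proof but a diagnosis of why the paper's machinery does not close the gap, and as a diagnosis it is accurate. The two obstructions you isolate are exactly the right ones. First, on the transversality side: forbidding all pairs at cyclic distance less than $r$ produces the constraint graph $C_n^{r-1}$, whose vertices have $|N(v)| = 2r-4$ (far exceeding the $2|N(v)|+|N^2(v)|<r$ hypothesis of Theorem~\ref{thm:engstrom}) and which is neither a disjoint union of paths and cycles (Theorem~\ref{thm:hell}) nor subordinate to a partition into blocks of size at most $r-1$ (Theorem~\ref{thm:bmz}). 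The paper itself concedes that even an optimistic connectivity argument would only reach $r \sim 2s$, i.e., roughly $2$-stability relative to $r$, not $r$-stability. Second, on the geometric side: your observation that colorful partitions permit within-part gaps as small as $2$ (two indices straddling the overlap of consecutive blocks $Y_k, Y_{k+1}$) is correct, and it explains precisely why Lemma~\ref{lem:colorful} delivers $2$-stability natively, $3$-stability only with the ad-hoc surgery of Theorem~\ref{thm:almost-3-stable}, and nothing approaching gap~$r$. Your remark that colorfulness is a $\bmod\,(r-1)$ structure while $r$-stability is a $\bmod\,r$ condition is a fair summary of the mismatch.

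Since no proof exists in the paper to compare against, the only substantive critique is that your write-up should not be framed as a ``proof proposal'' at all: neither of the two auxiliary results you would need (a Tverberg-type theorem tolerating $C_n^{r-1}$ as a constraint graph, or a point configuration whose unavoidable Tverberg partitions have within-part gap at least $r$) is known, and obtaining either would itself be a significant advance. If you want a tractable intermediate target, the natural one suggested by the paper is to improve the constant in Corollary~\ref{cor:s-stable} from $r>6s-6$ toward $r \ge 2s$, or to attack the conjecture for small $r$ (the first genuinely open prime case) by a direct equivariant obstruction computation rather than by black-box transversality theorems.
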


Schrijver's theorem~\cite{schrijver1978} is the $r=2$ case of this conjecture. It also holds for $r$ a power of two, since Alon, Drewnowski, 
and \L uczak~\cite{alon2009} found a combinatorial reduction that shows if Conjecture~\ref{conj:stable} holds for $r = p$ and $r = q$
then it also holds for the product $r = pq$. Denote by $\KG^r(k,n)_{\widetilde{2-\textrm{stab}}}$ the subhypergraph of $\KG^r(k,n)$ induced by
those vertices corresponding to almost $2$-stable $k$-sets. Recall that a set $\sigma \subset [n]$ is called \emph{almost $2$-stable}
if $|i-j|\ne 1$ for any $i,j \in \sigma$. The following theorem implies that for certain parameters we find optimal lower bounds for the
chromatic number of the subhypergraph $H \subset \KG^r(k,n)$ whose vertices are induced by almost $2$-stable $k$-sets that arise as
unions of two $r$-stable sets of cardinality~$k/2$.

\begin{theorem}
	Let $r \ge 2$ be a prime, and let $k \ge 1$, and $n \ge rk$ be integers. Let $C_1 \sqcup \dots \sqcup C_m$ be a partition of $[n]$
	with $|C_i| \le r-1$.	
	Let $H \subset \KG^r(k,n)_{\widetilde{2-\textrm{stab}}}$ be the subhypergraph that only contains those almost $2$-stable $k$-sets 
	$\sigma \subset [n]$ as vertices that satisfy $|\sigma \cap C_i| \le 1$ for all~$i$. Then $\chi(H) = \lceil \frac{n-r(k-1)}{r-1} \rceil$.
\end{theorem}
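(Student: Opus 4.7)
The plan is to follow the blueprint of Theorem~\ref{thm:stable} and Theorem~\ref{thm:almost-3-stable}, replacing the path-avoidance input (Theorem~\ref{thm:hell}) with the partition-constrained Tverberg theorem (Theorem~\ref{thm:bmz}). The upper bound $\chi(H) \le \chi(\KG^r(k,n)) = \lceil \frac{n-r(k-1)}{r-1} \rceil$ is inherited from the Alon--Frankl--Lov\'asz coloring since $H$ is a subhypergraph of $\KG^r(k,n)$, so the content lies in the matching lower bound, which I extract from Theorem~\ref{thm:transversality} by producing complexes $\overline K \subseteq L$ on a common vertex set with $\KG^r(\overline K, L) = H$.

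For the lower complex take the largest integer $d$ with $(r-1)d \le r(k-1)-1$, set $t = (r-1)d - r(k-2) - 1 \ge 0$, let $K$ be the complex on $[n]$ whose missing faces are exactly the almost $2$-stable $k$-subsets, and put $\overline K = K * \Delta_{t-1}$ on vertex set $[n+t]$. Place points $p_1, \dots, p_{n+t} \in \R^d$ along a sufficiently stretched moment curve as in Lemma~\ref{lem:colorful}, send vertex $i$ to $p_i$ in the natural order, and extend facewise linearly to obtain $f \colon \overline K \longrightarrow \R^d$. The same codimension-plus-colorfulness argument as in the proof of Theorem~\ref{thm:stable} rules out $r$-fold coincidences on pairwise disjoint faces of $\overline K$: a Tverberg point would force some $\sigma_j$ to contain at least $k$ vertices of $[n]$ no two of which are successive in $[n+t]$, so $\sigma_j \cap [n]$ would be an almost $2$-stable set of size $\ge k$ and would contain a missing face of $K$, contradicting $\sigma_j \in \overline K$.

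For the upper complex, extend the partition $C_1 \sqcup \dots \sqcup C_m$ of $[n]$ to a partition of $[n+t]$ by appending the $t$ singleton parts $\{n+1\}, \dots, \{n+t\}$; every part still has cardinality $\le r-1$. Let $L$ be the full simplex on $[n+t]$ with every edge internal to a single part deleted. Setting $D = \lfloor \frac{n+t-1}{r-1} \rfloor$, any continuous map $F\colon L \longrightarrow \R^{D-1}$ extends continuously to $\Delta_{n+t-1}$, and Theorem~\ref{thm:bmz} applied in its rainbow form yields $r$ pairwise disjoint faces of $\Delta_{n+t-1}$, each containing at most one vertex per part, whose images intersect; being rainbow, these faces lie in $L$ by construction. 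Theorem~\ref{thm:transversality} then gives $\chi(\KG^r(\overline K, L)) \ge D - d$, and the same arithmetic as in the proof of Theorem~\ref{thm:stable} turns this into $\lceil \frac{n-r(k-1)}{r-1} \rceil$. The missing faces of $\overline K$ that are faces of $L$ are precisely the almost $2$-stable $k$-subsets $\sigma \subset [n]$ with $|\sigma \cap C_i| \le 1$ for every $i$, so $\KG^r(\overline K, L) = H$.

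The main subtlety compared with Theorem~\ref{thm:stable} is the bookkeeping that reconciles the prescribed partition of $[n]$ with the $t$ auxiliary cone vertices introduced for the moment-curve construction; declaring them singleton parts is what simultaneously preserves the hypothesis $|C_i| \le r-1$ of Theorem~\ref{thm:bmz} and keeps the cone vertices free of any new edge deletions in $L$, so that both the lower-bound obstruction in $\overline K$ and the upper-bound Tverberg obstruction in $L$ transfer across unchanged from the $2$-stable case.
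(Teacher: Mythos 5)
Your proposal is correct and follows essentially the same route as the paper: the paper's proof is literally ``repeat the proof of Theorem~\ref{thm:stable} with $L = C_1 * \dots * C_m * \Delta_{t-1}$ and invoke Theorem~\ref{thm:bmz},'' and your complex $L$ (the simplex on $[n+t]$ minus intra-part edges, with the $t$ cone vertices as singleton parts) is exactly that join. Your observation that one needs the \emph{rainbow} conclusion of the Blagojevi\'c--Matschke--Ziegler theorem (which the paper's statement of Theorem~\ref{thm:bmz} omits, evidently by oversight) is accurate and is implicitly what the paper uses.
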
 

\begin{proof}
	Repeat the proof of Theorem~\ref{thm:stable}, but now for the complex $L$ use $C_1* \dots * C_m * \Delta_{t-1}$, the join of
	the sets in the partition of~$[n]$ and a small simplex $\Delta_{t-1}$ with $t$ chosen as before. 
	The existence of the required $r$-fold intersection point amon pairwise disjoint faces is guaranteed by Theorem~\ref{thm:bmz}
	--- now only for $r$ a prime. The hypergraph $H$ is~$\KG^r(K,L)$.
\end{proof}

For $n$ divisible by $r-1$ let the $C_i$ be blocks of $r-1$ consecutive integers. If $k$ is even, say $k = 2\ell$, then any $k$-subset
$\sigma \subset [n]$ with $|\sigma \cap C_i| \le 1$ for all~$i$ can be split into two $r$-stable $\ell$-subsets: if $i_1 < i_2 < \dots < i_k$
are the elements of~$\sigma$, split it into $\{i_1, i_3, \dots, i_{k-1}\}$ and $\{i_2, i_4, \dots, i_k\}$. We explicitly formulate this as
a corollary:

\begin{corollary}
	Let $r \ge 2$ be a prime, $n$ divisible by $r-1$, $k = 2\ell$ for some integer $\ell \ge 1$, and $n \ge rk$. Let $H \subset \KG^r(k,n)$
	be the subhypergraph induced by vertices corresponding to almost $2$-stable $k$-sets that arise as the union of two $r$-stable
	$\ell$-sets whose elements alternate. Then $\chi(H) = \lceil \frac{n-r(k-1)}{r-1} \rceil$.
\end{corollary}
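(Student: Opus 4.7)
The plan is to derive this corollary directly from the preceding theorem by making a specific choice of partition and translating the ``$|\sigma \cap C_i| \le 1$'' condition into the alternating $r$-stable splitting. Essentially nothing new needs to be proved; the argument is a matter of carefully reading the discussion in the paragraph immediately preceding the corollary.

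First, because $(r-1) \mid n$, I would partition $[n]$ into $m = n/(r-1)$ consecutive blocks $C_i = \{(i-1)(r-1)+1, \dots, i(r-1)\}$, each of size $r-1$. Let $H'$ denote the subhypergraph produced by the preceding theorem applied to this partition; then $\chi(H') = \lceil \frac{n-r(k-1)}{r-1}\rceil$. The key combinatorial step is to check that every vertex of $H'$ is also a vertex of $H$. Given an almost $2$-stable $k$-set $\sigma = \{i_1 < i_2 < \dots < i_k\}$ with $|\sigma \cap C_i| \le 1$, the elements lie in strictly increasing blocks $C_{a_1}, C_{a_2}, \dots, C_{a_k}$ with $a_1 < a_2 < \dots < a_k$. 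Splitting $\sigma$ by alternation into $A = \{i_1, i_3, \dots, i_{k-1}\}$ and $B = \{i_2, i_4, \dots, i_k\}$, consecutive elements of $A$ (resp.\ $B$) lie in blocks whose indices differ by at least $2$, so a short calculation using the block endpoints gives $i_{2j+1} - i_{2j-1} \ge (r-1)+1 = r$. Hence $A$ and $B$ are $r$-stable $\ell$-sets, their union is $\sigma$, and their elements alternate, so $\sigma$ is a vertex of $H$.

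With this inclusion of vertex sets, $H'$ is the subhypergraph of $H$ induced on a subset of the vertices, so every proper coloring of $H$ restricts to a proper coloring of $H'$, yielding $\chi(H) \ge \chi(H') = \lceil \frac{n-r(k-1)}{r-1}\rceil$. For the matching upper bound, I would simply note that $H$ is an induced subhypergraph of $\KG^r(k,n)$, so the greedy coloring underlying Theorem~\ref{thm:afl} also properly colors $H$ with $\lceil \frac{n-r(k-1)}{r-1}\rceil$ colors.

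There is no real obstacle; the only thing to be careful about is the combinatorial translation in the second step, in particular the block-endpoint estimate that guarantees $r$-stability of the alternating halves. The statement about the elements of $A$ and $B$ alternating to form $\sigma$ is immediate from the construction, so the whole argument collapses to invoking the preceding theorem and the Alon–Frankl–Lov\'asz upper bound.
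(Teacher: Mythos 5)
Your argument is correct and is exactly the paper's: the corollary is obtained by specializing the preceding theorem to the partition of $[n]$ into $n/(r-1)$ consecutive blocks of size $r-1$ and observing that any transversal almost $2$-stable $k$-set splits by alternation into two $r$-stable $\ell$-sets, with the upper bound coming from the coloring of $\KG^r(k,n)$ itself. The only detail worth making explicit is that $r$-stability is a \emph{cyclic} condition, so besides the consecutive gaps $i_{2j+1}-i_{2j-1}\ge r$ one should also check the wrap-around gap of each half; this too follows from the block structure, since $i_{k-1}\le n-(r-1)$ gives wrap-around at least $r$ for the odd-indexed half and $i_2\ge r$ gives the same for the even-indexed half.
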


\section{Eliminating local effects on the colorabilty defect}
\label{sec:nonlocal}

Let $\F$ be a family of subsets of~$[n]$. The \emph{$r$-colorabilty defect} $\cd^r(\F)$ of $\F$ is defined as
$$\cd^r(\F) = n - \max\{\sum_{i=1}^r |A_i| \: : \: A_1, \dots, A_r \subset [n] \ \text{pairwise disjoint and} \ 
F \not\subset A_i \ \text{for all} \ F \in \F \ \text{and} \ i \in [r]\}.$$

For an arbitrary set system $\F$ we denote by $\KG^r(\F)$ the \emph{generalized Kneser hypergraph} whose 
vertices are the elements of~$\F$ and whose hyperedges are $r$-tuples of pairwise disjoint sets. This is only
slightly more general than the hypergraphs $\KG^r(K)$ for a simplicial complex~$K$, where the vertices are
determined by the missing faces of~$K$. For any system $\F$ of subsets of~$[n]$ such that for $F \ne F' \in \F$ 
neither $F \subset F'$ nor $F' \subset F$, there is a unique simplicial complex $K$ on vertex set $[n]$ whose
missing faces are precisely the sets in~$\F$. Moreover, for an arbitrary system of sets $\F$, if $\F' \subset \F$ denotes
the system of inclusion-minimal sets in~$\F$, then $\chi(\KG^r(\F)) = \chi(\KG^r(\F'))$. 
See~\cite[Lemma~4.1 and~4.2]{frick2017} for details. Hence, for the purposes of
chromatic numbers we may and will tacitly assume that $\F$ contains no two sets ordered by inclusion. Those
set systems are precisely the systems that arise as missing faces of simplicial complexes.

A usual progression has been that a result that was first established for Kneser hypergraphs later gets extended 
to generalized Kneser hypergraphs of arbitrary set systems: Lov\'asz established Kneser's conjecture, that 
$\chi(\KG^2(k,n)) = n-2(k-1) = \cd^2(\KG^2(k,n))$, which was extended to arbitrary set systems -- $\chi(\KG^2(\F)) \ge \cd^2(\KG^2(\F))$ --
by Dolnikov~\cite{dolnikov1988}. In this language the generalization of Kneser's conjecture to hypergraphs due to Alon, Frankl, and Lov\'asz 
-- Theorem~\ref{thm:afl} -- can be phrased as $\chi(\KG^r(k,n)) = \lceil \frac{1}{r-1}\cd^r(\KG^r(k,n)) \rceil$. The 
extension to set systems is due to K\v r\' i\v z~\cite{kriz1992}:

\begin{theorem}[K\v r\' i\v z~\cite{kriz1992}]
\label{thm:kriz}
	$$\chi(\KG^r(\F)) \ge \Big\lceil \frac{1}{r-1}\cd^r(\F) \Big\rceil$$
\end{theorem}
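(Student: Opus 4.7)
The plan is to transpose the argument of Section~\ref{sec:simple} (proof of Theorem~\ref{thm:afl}) from the uniform family $\binom{[n]}{k}$ to an arbitrary set system $\F$, with $\cd^r(\F)$ playing the role of the numerator $n - r(k-1)$. I first carry out the argument assuming $r$ is a prime power, then reduce the general case to it.

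Let $K$ be the (unique) simplicial complex on $[n]$ whose missing faces are precisely the members of~$\F$, and set $c = \cd^r(\F)$. The defining property of the colorability defect is exactly that any $r$ pairwise disjoint faces of $K$---equivalently, any $r$ pairwise disjoint subsets of $[n]$ none of which contains a member of $\F$---have total cardinality at most $n-c$. Choose $d$ to be the smallest integer satisfying $(r-1)(d+1) \ge n-c$, let $t = (r-1)(d+1) - (n-c) \ge 0$, and form $\overline K = K * \Delta_{t-1}$ (with the convention $\Delta_{-1} = \emptyset$). The complex $\overline K$ has $n+t$ vertices and the same missing faces as $K$, so that $\KG^r(\overline K) = \KG^r(\F)$.

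Place the $n+t$ vertices of $\overline K$ as a point set in strong general position in $\R^d$ in the sense of Perles and Sigron~\cite{perles2014}, and let $f\colon \overline K \to \R^d$ be the induced affine map. For any $r$ pairwise disjoint faces $\sigma_1,\dots,\sigma_r$ of $\overline K$, the parts that lie in $[n]$ form $r$ pairwise disjoint faces of $K$ of total size at most $n-c$, while the parts in the auxiliary vertex set of $\Delta_{t-1}$ contribute at most $t$, so $\sum_i |\sigma_i| \le (n-c) + t = (r-1)(d+1)$. Strong general position then forbids $f(\sigma_1) \cap \dots \cap f(\sigma_r) \ne \emptyset$ by a codimension count. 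Assuming $r$ is a prime power, Theorem~\ref{thm:lower-bounds} applies with $N = n+t$, and a direct computation
\[
\Big\lfloor \tfrac{n+t-1}{r-1} \Big\rfloor - d \;=\; (d+1) + \Big\lfloor \tfrac{c-1}{r-1} \Big\rfloor - d \;=\; \Big\lceil \tfrac{c}{r-1} \Big\rceil
\]
gives the desired bound $\chi(\KG^r(\F)) \ge \lceil \cd^r(\F)/(r-1) \rceil$.

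The main obstacle is covering $r$ with more than one distinct prime divisor, where the topological Tverberg theorem underlying Theorem~\ref{thm:lower-bounds} is unavailable. The standard workaround, already used in Section~\ref{sec:simple} for Theorem~\ref{thm:afl} and going back to~\cite{alon1986}, is an induction on the number of prime divisors of~$r$: from a hypothetical low-color proper coloring of $\KG^{pq}(\F)$ one extracts a low-color proper coloring of $\KG^{p}(\F')$ for a suitably derived family $\F'$, and applies the prime-power case. The delicate combinatorial point is to define $\F'$ so that $\cd^p(\F')$ is appropriately controlled by $\cd^{pq}(\F)$; once this is in place, the prime-power case above closes the argument for all $r \ge 2$.
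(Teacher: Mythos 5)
Your prime-power argument is correct and is essentially the proof this paper supplies implicitly: the statement is quoted from K\v r\' i\v z without proof, but the combination of the lemma $\cd^r(\F)\le\tcd^r(\F)$ (strong general position, join with a small simplex so that the vertex count lands at $(r-1)(d+1)+1$ above the maximal total size $M=n-\cd^r(\F)$ of $r$ disjoint faces) with Theorem~\ref{thm:top} is exactly your construction, and your arithmetic $\lfloor\frac{n+t-1}{r-1}\rfloor-d=\lceil\frac{c}{r-1}\rceil$ checks out. You also apply Theorem~\ref{thm:lower-bounds} in the same way the paper does in the proof of Theorem~\ref{thm:afl}, so no complaint there.

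The gap is the composite case, which you explicitly defer: ``define $\F'$ so that $\cd^p(\F')$ is appropriately controlled'' is precisely the nontrivial content of the reduction, and it is subtle enough that K\v r\' i\v z's original treatment required a published correction. The paper carries this induction out in full (in the harder, stable setting) in the second half of the proof of Theorem~\ref{thm:hyp-schrijver}, and the missing ingredients are concrete: for $r=pq$ and a putative proper $(t-1)$-coloring with $(r-1)(t-1)<\cd^r(\F)$, one sets $\F|_E=\{F\cap E : F\in\F\}$ and takes the derived family to be $\Gamma=\{E\subseteq[n] : \cd^q(\F|_E)>(q-1)(t-1)\}$; the key estimate is $\cd^p(\Gamma)>(p-1)(t-1)$, proved by contradiction via the count $p(q-1)(t-1)+(p-1)(t-1)=(r-1)(t-1)$. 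One then uses the $q$-fold case on each $\F|_E$, $E\in\Gamma$, to induce a $(t-1)$-coloring of $\Gamma$, and the $p$-fold case on $\Gamma$ to produce $r$ pairwise disjoint monochromatic members of $\F$. Without this construction your proof covers only prime powers, so you should either supply it or restrict the claim accordingly.
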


This raises the questions whether Schrijver's result also holds in this generalized setting. For $\F$ a system of subsets 
of~$[n]$, denote by $\F_{s-\textrm{stab}}$ the subsystem of $s$-stable sets in~$\F$. Then Schrijver's theorem guarantees that 
$\chi(\KG^2(\binom{[n]}{k}_{2-\textrm{stab}})) \ge \cd^2(\binom{[n]}{k})$. For $n = 5$ let $\F=\{\{1,2\},\{2,3\},\{3,4\},\{4,5\},\{5,1\}\}$.
Then $\cd^2(\F) = 1$, while $\chi(\KG^2(\F_{2-\textrm{stab}})) = 0$, so such an extension of Schrijver's result to arbitrary set systems 
cannot hold in general. However, this extension holds in full generality in the hypergraph setting, see Theorem~\ref{thm:hyp-schrijver},
and almost holds for $r=2$, see Theorem~\ref{thm:gen-schrijver}.

\begin{theorem}
\label{thm:hyp-schrijver}
	Let $\F$ be a system of subsets of~$[n]$ and $r \ge 3$. Then $$\chi(\KG^r(\F_{2-\textrm{stab}})) \ge \Big\lceil \frac{1}{r-1}\cd^r(\F) \Big\rceil.$$
\end{theorem}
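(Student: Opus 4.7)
The plan is to imitate the proof of Theorem~\ref{thm:stable}, applying the transversality framework of Theorem~\ref{thm:transversality} to a pair $(\overline K, L)$ of simplicial complexes with $\KG^r(\overline K, L) = \KG^r(\F_{2-\textrm{stab}})$. First I would reduce to the case where $r$ is a prime power via a combinatorial reduction analogous to Lemma~\ref{lem:primes}, extended from $k$-uniform families to arbitrary set systems. Setting $c = \cd^r(\F)$ and $m = n - c$, I choose $t \in \{0, 1, \dots, r-2\}$ with $t \equiv -m \pmod{r-1}$ so that $d := (m+t)/(r-1) - 1$ is an integer satisfying $(r-1)(d+1) = m+t$. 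Let $K$ be the simplicial complex on $[n]$ whose missing faces are exactly the almost $2$-stable members of $\F$, and set $\overline K = K * \Delta_{t-1}$ with auxiliary vertices $v_1, \dots, v_t$. Using Lemma~\ref{lem:colorful}, I then place the vertices of $\overline K$ as a sequence $p_1, \dots, p_{n+t}$ on a stretched moment curve in $\R^d$ in the order $1, 2, \dots, n, v_1, \dots, v_t$, and take $f\colon \overline K \to \R^d$ to be the resulting affine map.

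The heart of the argument is to show that $f$ has no $r$-fold coincidence on pairwise disjoint faces. Suppose $\sigma_1, \dots, \sigma_r$ are such faces with $f(\sigma_1) \cap \dots \cap f(\sigma_r) \ne \emptyset$. By a codimension count I may assume the $\sigma_i$ together involve exactly $(r-1)(d+1)+1 = m+t+1$ vertices; at most $t$ of these lie in $\{v_1, \dots, v_t\}$, so $\sum_i |\sigma_i \cap [n]| \ge m+1$. By Lemma~\ref{lem:colorful} the partition $\sigma_1 \sqcup \dots \sqcup \sigma_r$ of the chosen subsequence is colorful, so no two points consecutive in the moment-curve order lie in the same $\sigma_i$; in particular each $\sigma_i \cap [n]$ is almost $2$-stable. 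If some $F \in \F$ were contained in $\sigma_i \cap [n]$, then $F$ itself would be almost $2$-stable, hence a missing face of $K$, contradicting $\sigma_i \cap [n] \in K$. Therefore each $\sigma_i \cap [n]$ contains no element of $\F$, so it is a face of the full complex on $[n]$ with missing faces $\F$; the definition of $\cd^r(\F) = c$ then forces $\sum_i |\sigma_i \cap [n]| \le m$, contradicting the lower bound $\ge m+1$.

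Finally, let $L$ be the simplex on $[n+t]$ with the single edge $\{1, n\}$ deleted, and set $D = \lfloor (n+t-1)/(r-1) \rfloor$. Since a single edge is both a matching and a path, Theorem~\ref{thm:hell} applies for $r \in \{3, 4\}$ and all prime powers $r \ge 5$: every continuous map $L \to \R^{D-1}$ has an $r$-fold coincidence on pairwise disjoint faces of $L$. The missing faces of $\overline K$ that are faces of $L$ are precisely the almost $2$-stable members of $\F$ not containing $\{1, n\}$, i.e., $\F_{2-\textrm{stab}}$, so $\KG^r(\overline K, L) = \KG^r(\F_{2-\textrm{stab}})$. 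Theorem~\ref{thm:transversality} then yields $\chi(\KG^r(\F_{2-\textrm{stab}})) \ge D - d$, and a brief computation using $(r-1)(d+1) = m+t$ shows $D - d = \lceil c/(r-1) \rceil$. The main obstacle is handling non-prime-power values such as $r \in \{6, 10, 12\}$, where Theorem~\ref{thm:hell} does not directly apply: one would need either a reduction to prime powers at the level of arbitrary set systems or an alternative topological input replacing the prime-power restriction.
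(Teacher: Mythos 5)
Your geometric core is correct and is essentially the paper's argument: you place the vertices of $\overline K = K * \Delta_{t-1}$ on a stretched moment curve as in Lemma~\ref{lem:colorful}, use colorfulness to force each $\sigma_i \cap [n]$ to be almost $2$-stable and hence free of members of $\F$, derive the contradiction $\sum_i |\sigma_i \cap [n]| \le n - \cd^r(\F)$ against the codimension count, and then combine Theorem~\ref{thm:hell} (for the single deleted edge $\{1,n\}$, valid for all prime powers $r \ge 3$) with Theorem~\ref{thm:transversality}. The paper organizes the no-coincidence step slightly differently, via an auxiliary complex $K'$ whose missing faces are all of $\F$ and the observation that some $\sigma_j$ cannot be a face of $K' * \Delta_{m-2}$ and hence contains two successive integers; your direct version is equivalent, your identification $\KG^r(\overline K, L) = \KG^r(\F_{2-\textrm{stab}})$ is right, and the final arithmetic $D - d = \lceil \cd^r(\F)/(r-1)\rceil$ checks out.

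The genuine gap is the reduction from arbitrary $r \ge 3$ to prime powers, which you flag but do not close, and for which the route you suggest is the wrong one. Lemma~\ref{lem:primes} is a statement about $s$-stable subsets of $\binom{[n]}{k}$ and its proof exploits that uniform structure; it does not extend to arbitrary set systems in any evident way. The paper instead adapts K\v r\' i\v z's induction on the prime factorization: writing $r = pq$ and $t = \chi(\KG^r(\F_{2-\textrm{stab}}))$, it sets $\Gamma = \{E \subset [n] \: : \: \cd^q(\F|_E) > (q-1)(t-1)\}$, proves $\cd^p(\Gamma) > (p-1)(t-1)$ by a counting argument, and nests $q$ pairwise disjoint monochromatic stable sets inside each of $p$ disjoint same-colored members of $\Gamma$. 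There is moreover a subtlety you would have to confront: the $r=2$ case of the statement is \emph{false} for general set systems (the pentagon $\F=\{\{1,2\},\{2,3\},\{3,4\},\{4,5\},\{5,1\}\}$ has $\cd^2(\F)=1$ but no $2$-stable members), so the induction must be arranged so that a single factor of $2$ in $r$ is only ever consumed at the outer, unstable level, where Theorem~\ref{thm:kriz} suffices. Since the theorem is asserted for every $r \ge 3$, this reduction is an essential component of the proof rather than a routine afterthought.
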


\begin{proof}
	First assume that $r \ge 3$ is a prime power.
	Let $K$ be the simplicial complex on vertex set~$[n]$ with missing faces~$\F_{\widetilde{2-\textrm{stab}}}$, the collection of sets
	in~$\F$ that are almost $2$-stable. Denote by $K'$ the simplicial
	complex on vertex set $[n]$ with missing faces~$\F$.
	Let $$M = \max\{\sum_{i=1}^r |A_i| \: : \: A_1, \dots, A_r \subset [n] \ \text{pairwise disjoint and} \ 
	F \not\subset A_i \ \text{for all} \ F \in \F \ \text{and} \ i \in [r]\}.$$ 
	In particular, the sets $A_i$ in the definition of $M$ above determine faces of the complex~$K'$.
	Choose the integer $d$ as the least integer such that $M < (r-1)(d+1)+1$, i.e., $M+m = (r-1)(d+1)+1$ for some $0 < m \le r-1$.
	Define the simplicial complex $\overline K'$ to be the join $K' * \Delta_{m-2}$. Then any strong 
	general position map $f\colon \overline K' \longrightarrow \R^d$ satisfies $f(\sigma_1) \cap \dots \cap f(\sigma_r) = \emptyset$
	for any $r$ pairwise disjoint faces $\sigma_1, \dots, \sigma_r$ of~$\overline K'$. By definition of $M$ the faces $\sigma_i$
	may in total only involve at most $M$ vertices of~$K$. The complex $\overline K'$ has $m-1$ additional vertices, so
	the faces $\sigma_i$ involve less than $M+m = (r-1)(d+1)+1$ vertices, thus the sets $f(\sigma_1), \dots, f(\sigma_r)$ do not have
	a common point of intersection by a codimension count.
	
	Now let $\overline K$ be the join $K * \Delta_{m-2}$, and let $\widetilde f\colon \overline K \longrightarrow \R^d$ be a general position map
	that maps the vertices of $\overline K$ to pairwise distinct points on the stretched moment curve according to Lemma~\ref{lem:colorful}.
	Suppose there were $r$ pairwise disjoint faces $\sigma_1, \dots, \sigma_r$ of $\overline K$ such that 
	$\widetilde f(\sigma_1) \cap \dots \cap \widetilde f(\sigma_r) \ne \emptyset$. Then w.l.o.g. the $\sigma_i$ involve precisely 
	$(r-1)(d+1)+1$ vertices, and necessarily at least one $\sigma_j$ is not contained in~$\overline K'$. Such a face $\sigma_j$
	must contain two successive integers in $[n]$, in contradiction to Lemma~\ref{lem:colorful}.
	
	Let $L$ be the simplicial complex on the same vertex set as $\overline K$ that is obtained from the simplex by deleting 
	the edge $\{1,n\}$. We now proceed similar to the proof of Theorem~\ref{thm:stable}. Let $D = \lfloor \frac{n+m-2}{r-1} \rfloor$.
	Then by Theorem~\ref{thm:hell} any continuous map $F\colon L \longrightarrow \R^{D-1}$ identifies $r$ points from $r$ pairwise
	disjoint faces. Thus by Theorem~\ref{thm:transversality} we obtain the lower bound $\chi(\KG^r(\overline K, L)) \ge D-d$.
	It is left to show that $(r-1)(D-d) \ge \cd^r(\F) = n-M$. Writing $D = \frac{n+m-2-\alpha}{r-1}$ for some $0 \le \alpha \le r-2$ we
	need to show that $n+m-2-\alpha +M-(r-1)d \ge n$, which is equivalent to $M+m \ge (r-1)d+\alpha+2$. Since $M+m = (r-1)(d+1)+1$
	this follows from $\alpha \le r-2$.
	
	The case for general $r$ follows by induction on prime power factors very similar to the proof of K\v r\' i\v z~\cite{kriz2000c}. We have to be slightly more
	careful for those $r$ with precisely one two in their prime factorization. We essentially repeat K\v r\' i\v z' proof with a few changes. 
	Let $r = pq$ where we have shown the claimed lower bound for the chromatic number for $\KG^p(\F_{2-\textrm{stab}})$ and for
	$\KG^q(\F_{2-\textrm{stab}})$ for any set system $\F$ already. Let us moreover assume that $q\ne 2$. Write $t = \chi(\KG^r(\F_{2-\textrm{stab}}))$
	and assume for contradiction that $t-1 < \frac{1}{r-1}\cd^r(\F)$, that is, $(r-1)(t-1) < \cd^r(\F)$.
	
	For any subset $E \subset [n]$ denote by $\F|_E$ the set system $\{F\cap E \: : \: F\in \F\}$. 
	Let $\Gamma = \{E \subset [n] \: : \: \cd^q(\F|_E) > (q-1)(t-1)\}$. Then $\cd^p(\Gamma) > (p-1)(t-1)$.
	Otherwise we could find $A_1, \dots, A_p \subset [n]$ with no subset in~$\Gamma$ such that
	$|[n]\setminus\bigcup_i A_i| \le (p-1)(t-1)$. Since the $A_i$ are not in $\Gamma$ we know that $\cd^q(\F|_{A_i}) \le (q-1)(t-1)$,
	and so we can find $B_{i,1}, \dots, B_{i,q} \subset A_i$ with no subset in $\F$ and $|A_i \setminus \bigcup_j B_{i,j}| \le (q-1)(t-1)$.
	There are $pq = r$ sets $B_{i,j}$ in total and the complement of their union in $[n]$ has at most $p(q-1)(t-1)+(p-1)(t-1)$
	elements, which is equal to $(r-p)(t-1)+(p-1)(t-1) = (r-1)(t-1)$, which is a contradiction to $\cd^r(\F) > (r-1)(t-1)$.
	
	Given an arbitrary $(t-1)$-coloring of $\F$ we need to show that there are $r$ pairwise disjoint sets of $\F_{2-\textrm{stab}}$ 
	that receive the same color. By induction hypothesis we have that $\chi(\KG^q((\F|_E)_{2-\textrm{stab}})) > t-1$ for any $E \in \Gamma$.
	Thus we can find $q$ pairwise disjoint sets $X_{E,1}, \dots, X_{E,q} \in (\F|_E)_{2-\textrm{stab}}$ all colored by~$i_E$.
	Associating color $i_E$ to set $E$ defines a $(t-1)$-coloring of~$\Gamma$. Now since $\cd^p(\Gamma) > (p-1)(t-1)$
	we have by induction hypothesis (even for $p = 2$ by Theorem~\ref{thm:kriz}) that there are $p$ pairwise disjoint $E_1, \dots, E_p \in \Gamma$ 
	that receive the same color. Then $X_{E_1,1}, \dots, X_{E_1,q}, \dots, X_{E_p,1}, \dots, X_{E_p,q}$ are $r$ pairwise 
	disjoint sets in $\F_{2-\textrm{stab}}$ that all receive the same color, as desired.
\end{proof}

For $r = 2$ the proof above still works as long as we replace the simplicial complex $L$ with the simplex on the vertex
set of $\overline K$ and use the topological Radon theorem. We then obtain a lower bound for the chromatic number
of generalized Kneser graphs, but instead of $\F_{2-\textrm{stab}}$ this lower bound holds for the set system $\F_{\widetilde{2-\textrm{stab}}}$
that contains all almost $2$-stable sets in~$\F$.

\begin{theorem}
\label{thm:gen-schrijver}
	Let $\F$ be a system of subsets of~$[n]$. Then $\chi(\KG^2(\F_{\widetilde{2-\textrm{stab}}})) \ge \cd^2(\F)$.
\end{theorem}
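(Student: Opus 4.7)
The plan is to run the argument of Theorem~\ref{thm:hyp-schrijver} essentially verbatim for $r = 2$, with the two modifications flagged by the preceding remark: replace the punctured simplex $L$ by the full simplex $\Delta_{n-1}$ on $[n]$, and replace the appeal to Theorem~\ref{thm:hell} by the topological Radon theorem of Bajmoczy and B\'ar\'any. Let $K$ be the simplicial complex on $[n]$ whose missing faces are precisely $\F_{\widetilde{2-\textrm{stab}}}$, and let $K'$ be the complex whose missing faces are~$\F$. Write
$$M = \max\Big\{|A_1|+|A_2| \: : \: A_1, A_2 \subset [n] \ \text{pairwise disjoint and} \ F \not\subset A_i \ \text{for all } F\in\F,\ i\in\{1,2\}\Big\},$$
so that $\cd^2(\F) = n - M$. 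With $r=2$ the integer $d$ of the earlier proof becomes $d = M-1$ and the join parameter $m = 1$, so $\overline K = K*\Delta_{-1} = K$ already has $n$ vertices and $(r-1)(d+1)+1 = M+1$.

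The first step is to produce an affine map $\widetilde f\colon K \longrightarrow \R^{M-1}$ sending vertex $i\in[n]$ to the $i$-th point of a sufficiently long Bukh--Loh--Nivasch sequence on the stretched moment curve, and then to check that $\widetilde f(\sigma_1)\cap\widetilde f(\sigma_2)=\emptyset$ for every pair of disjoint faces $\sigma_1,\sigma_2$ of~$K$. Assume otherwise; a codimension count lets us suppose $|\sigma_1\cup\sigma_2| = M+1$. Because this exceeds~$M$, the definition of $M$ forces some $\sigma_j$ to contain a set $F\in\F$. But $\sigma_j$ is a face of $K$, hence avoids every almost $2$-stable element of $\F$, so $F$ must contain two successive integers $i,i+1\in[n]$. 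Both lie in $\sigma_1\cup\sigma_2$, hence are consecutive in the subsequence of indices realising the Tverberg partition; by Lemma~\ref{lem:colorful} such a partition must be colorful, forcing $i$ and $i+1$ into distinct parts and contradicting $\{i,i+1\}\subset\sigma_j$.

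The second step is to feed this into Theorem~\ref{thm:transversality}. Take $L = \Delta_{n-1}$ on vertex set $[n]$, which contains $K$. The topological Radon theorem guarantees that every continuous $F\colon \Delta_{n-1}\longrightarrow \R^{n-2}$ identifies two points from disjoint faces, so Theorem~\ref{thm:transversality} with $d = M-1$ and transversality dimension $n-2$ yields
$$\chi(\KG^2(\overline K, L)) \ge (n-1)-(M-1) = n-M = \cd^2(\F).$$
Since $L$ is the full simplex on $[n]$, every subset of $[n]$ is a face of~$L$, so the vertex set of $\KG^2(\overline K, L)$ consists of all missing faces of~$K$, namely $\F_{\widetilde{2-\textrm{stab}}}$, and its edges are pairs of disjoint almost $2$-stable sets in $\F$; that is, $\KG^2(\overline K, L) = \KG^2(\F_{\widetilde{2-\textrm{stab}}})$, which gives the claimed bound.

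The only genuine obstacle, and the reason the conclusion concerns $\F_{\widetilde{2-\textrm{stab}}}$ rather than $\F_{2-\textrm{stab}}$, is that the topological Radon theorem has no edge-respecting strengthening analogous to Theorem~\ref{thm:hell} available for $r=2$: we cannot delete the edge $\{1,n\}$ from $\Delta_{n-1}$ and still guarantee a $2$-fold coincidence in disjoint faces for every map to $\R^{n-2}$. Consequently we cannot forbid the configuration $\{1,n\}\subset\sigma_j$ in the colorful-partition step, and must retain the ``almost'' in ``almost $2$-stable''---a loss that is unavoidable, as already witnessed by the $5$-cycle counterexample discussed just before the statement.
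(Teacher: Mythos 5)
Your proposal is correct and follows exactly the route the paper intends: the paper's own "proof" is the one-sentence instruction to rerun the argument of Theorem~\ref{thm:hyp-schrijver} with $L$ the full simplex and the topological Radon theorem in place of Theorem~\ref{thm:hell}, and you have filled in those details accurately (including the specialization $d=M-1$, $m=1$, the colorful-partition contradiction, and the dimension count giving $n-M=\cd^2(\F)$). Your closing remark on why the ``almost'' cannot be dropped for $r=2$ matches the paper's $5$-cycle counterexample discussion.
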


\begin{remark}
	It was pointed out by Fr\'ed\'eric Meunier that a weaker version of Theorem~\ref{thm:hyp-schrijver} for almost stable 
	sets can also be deduced from recent work of Alishahi and Hajiabolhassan~\cite{alishahi2016} and their notion of
	\emph{alternation number} of a Kneser hypergraph.
\end{remark}

Let $\F = \binom{[n]}{k}$ be the system of all $k$-subsets of~$[n]$. Then $\cd^r(\F) = n-r(k-1)$, and thus
Theorem~\ref{thm:kriz} is an extension of Theorem~\ref{thm:afl} that $\chi(\KG^r(k,n)) \ge \lceil \frac{n-r(k-1)}{r-1} \rceil$
to arbitrary set systems. However, these general lower bounds are significantly worse for stable Kneser hypergraphs:
Any set of $2k-2$ consecutive integers in $[n]$ cannot contain a $2$-stable $k$-set, and thus for $\F$ the collection
of $2$-stable $k$-sets in $[n]$ the colorability defect is $\cd^r(\F) \le n-2r(k-1)$. Tight lower bounds for the chromatic 
numbers of stable Kneser hypergraphs cannot be obtained from the general result of Theorem~\ref{thm:kriz}. 

Here we extend Theorem~\ref{thm:kriz} for $r$ a power of a prime to show that these ``local effects'' have no influence on the 
chromatic number of a general Kneser hypergraph. We do this in two ways: first, we define a topological analogue $\tcd^r(\F)$ of the
colorability defect~$\cd^r(\F)$, which satisfies $\tcd^r(\F) \ge \cd^r(\F)$ and still gives a lower bound for the chromatic number
of~$\chi(\KG^r(\F))$ for $r$ a prime power. The quantity $\tcd^r(\F)$ takes the global topology of the set system $\F$ into 
account, while $\cd^r(\F)$ is defined purely combinatorially. The second way of eliminating local effects of a set system~$\F$ 
that result in loose lower bounds for $\chi(\KG^r(\F))$ will be to extend the results above to the $s$-stable setting.

The \emph{topological $r$-colorability defect} $\tcd^r(\F)$ of a system $\F$ of subsets of~$[n]$ is defined as the maximum of 
$N - (r-1)(d+1)$, where $N$ is the number of vertices of a simplicial complex $K$ whose missing faces are precisely the
inclusion-minimal sets in~$\F$ and $d$ is the smallest dimension such that there is a continuous map $f\colon K \longrightarrow \R^d$ that
does not identify $r$ points from $r$ pairwise disjoint faces. A general position map $f\colon K \longrightarrow \R^d$
shows that $\cd^r(\F) \le \tcd^r(\F)$:

\begin{lemma}
	Let $\F$ be a system of subsets of~$[n]$. Then $\cd^r(\F) \le \tcd^r(\F)$.
\end{lemma}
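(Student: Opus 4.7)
The plan is to exhibit a single admissible complex $K$ together with an affine map $f\colon K\longrightarrow\R^d$ which certifies $\tcd^r(\F)\ge\cd^r(\F)$, choosing $d$ and padding $K$ with a small simplex so that $N-(r-1)(d+1)$ lands on $\cd^r(\F)$ exactly.

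Let $K_0$ be the simplicial complex on $[n]$ whose inclusion-minimal missing faces are the inclusion-minimal sets of~$\F$, and set $M=n-\cd^r(\F)$. The condition $F\not\subset A_i$ for all $F\in\F$ is equivalent to $A_i$ being a face of~$K_0$, so $M$ is the maximum of $\sum_i|\sigma_i|$ over $r$-tuples of pairwise disjoint faces $\sigma_1,\dots,\sigma_r$ of~$K_0$. Let $d\ge 0$ be the smallest integer with $(r-1)(d+1)\ge M$, and set $m=(r-1)(d+1)-M+1\in\{1,\dots,r-1\}$. I would then form $\overline K=K_0*\Delta_{m-2}$ (or $\overline K=K_0$ if $m=1$); since every new vertex lies in every face of the join, the inclusion-minimal missing faces of~$\overline K$ still equal those of~$K_0$, and $\overline K$ is admissible in the definition of~$\tcd^r(\F)$ with $N=n+m-1$ vertices.

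Next, I would let $f\colon\overline K\longrightarrow\R^d$ be any affine map placing the $N$ vertices in strong general position (e.g.\ on the moment curve). For $r$ pairwise disjoint faces $\sigma_1,\dots,\sigma_r$ of~$\overline K$, write $\sigma_i=\alpha_i\sqcup\beta_i$ with $\alpha_i\subset[n]$ and $\beta_i$ in the new vertex set; then $\sum_i|\alpha_i|\le M$ and $\sum_i|\beta_i|\le m-1$, so $\sum_i|\sigma_i|\le M+m-1=(r-1)(d+1)$. The standard Perles--Sigron codimension count then forces $f(\sigma_1)\cap\cdots\cap f(\sigma_r)=\emptyset$, so the smallest admissible dimension $d(\overline K)$ satisfies $d(\overline K)\le d$. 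By maximality in the definition of~$\tcd^r(\F)$,
$$\tcd^r(\F)\ \ge\ N-(r-1)(d(\overline K)+1)\ \ge\ N-(r-1)(d+1)\ =\ n-M\ =\ \cd^r(\F).$$

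The only delicate ingredient is the choice of~$m$: omitting the join with $\Delta_{m-2}$ loses an additive slack of up to $r-2$, since the smallest $d$ with $(r-1)(d+1)\ge M$ can overshoot $M$ by as much as $r-2$. Once $m$ is chosen so that $M+m-1$ matches $(r-1)(d+1)$ exactly, the vertex bookkeeping $N-(r-1)(d+1)=n-M$ works out on the nose, and the remainder is precisely the affine-general-position argument already recurring in Section~\ref{sec:simple} and Section~\ref{sec:geom-transversality}.
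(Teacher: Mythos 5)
Your proposal is correct and follows essentially the same route as the paper's proof: take the complex with missing faces $\F$, pad it with a join $\Delta_{m-2}$ so that $M+m-1=(r-1)(d+1)$, and apply the strong-general-position codimension count to an affine map into $\R^d$. The only additions are explicit checks the paper leaves implicit (that the join does not change the inclusion-minimal missing faces, and that one may pass from $d$ to the minimal $d(\overline K)$ in the definition of $\tcd^r$), both of which are fine.
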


\begin{proof}
	Let $K$ be the simplicial complex on vertex set $[n]$ with missing faces~$\F$. 
	Let $$M = \max\{\sum_{i=1}^r |A_i| \: : \: A_1, \dots, A_r \subset [n] \ \text{pairwise disjoint and} \ 
	F \not\subset A_i \ \text{for all} \ F \in \F \ \text{and} \ i \in [r]\}.$$ 
	In particular, the sets $A_i$ in the definition of $M$ above determine faces of the complex~$K$.
	Choose the integer $d$ as the least integer such that $M < (r-1)(d+1)+1$, i.e., $M+m = (r-1)(d+1)+1$ for some $0 < m \le r-1$.
	Define the simplicial complex $\overline K$ to be the join $K * \Delta_{m-2}$. Then any strong 
	general position map $f\colon \overline K \longrightarrow \R^d$ satisfies $f(\sigma_1) \cap \dots \cap f(\sigma_r) = \emptyset$
	for any $r$ pairwise disjoint faces $\sigma_1, \dots, \sigma_r$ of~$\overline K$. By definition of $M$ the faces $\sigma_i$
	may in total only involve at most $M$ vertices of~$K$. The complex $\overline K$ has $m-1$ additional vertices, so
	the faces $\sigma_i$ involve less than $M+m = (r-1)(d+1)+1$, thus the sets $f(\sigma_1), \dots, f(\sigma_r)$ do not have
	a common point of intersection by a codimension count.
	By definition $\tcd^r(\F) \ge n+m-1-(r-1)(d+1)$ and this is at least $n-M = \cd^r(\F)$ since $M +m = (r-1)(d+1)+1$.
\end{proof}

\begin{theorem} 
\label{thm:top}
	For $r$ a power of a prime
	$$\chi(\KG^r(\F)) \ge \Big\lceil \frac{1}{r-1}\tcd^r(\F) \Big\rceil.$$
\end{theorem}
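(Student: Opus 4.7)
The plan is to derive the bound from Theorem~\ref{thm:transversality} combined with the topological Tverberg theorem, in the same spirit as the proofs of Theorem~\ref{thm:afl} and Theorem~\ref{thm:stable}, but now letting the very definition of $\tcd^r(\F)$ supply the simplicial complex rather than constructing it by hand. By the discussion preceding Theorem~\ref{thm:kriz}, I may assume $\F$ is an antichain. First I would pick a simplicial complex $K$ on $N$ vertices whose missing faces are precisely the sets in~$\F$, together with a continuous map $f\colon K \to \R^d$ that does not identify $r$ points from $r$ pairwise disjoint faces, chosen so that the maximum in the definition of $\tcd^r(\F)$ is attained: $N - (r-1)(d+1) = \tcd^r(\F)$. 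Then $\chi(\KG^r(\F)) = \chi(\KG^r(K))$, and for $L = \Delta_{N-1}$ the full simplex on the vertex set of~$K$, every subset of that vertex set is a face of~$L$, so $\KG^r(K, L) = \KG^r(K)$.

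Next I would set $c = \lfloor (N-1)/(r-1) \rfloor - d - 1$, which is the largest integer such that $N - 1 \ge (r-1)(d+c+1)$. Since $r$ is a prime power, the topological Tverberg theorem then guarantees that every continuous map $F\colon L \to \R^{d+c}$ has $r$ pairwise disjoint faces whose images share a common point. This verifies the first hypothesis of Theorem~\ref{thm:transversality} for the pair $(K, L)$ with parameters $d$ and~$c$. Our map $f$ already violates the conclusion of that theorem, so its second hypothesis must fail, yielding
\[
\chi(\KG^r(\F)) = \chi(\KG^r(K, L)) \ge c + 1 = \Big\lfloor \frac{N-1}{r-1} \Big\rfloor - d.
\]
Substituting $N = \tcd^r(\F) + (r-1)(d+1)$ and using the identity $\lfloor (m-1)/(r-1) \rfloor + 1 = \lceil m/(r-1) \rceil$, valid for non-negative integers $m$, converts the right-hand side into $\lceil \tcd^r(\F)/(r-1) \rceil$, as required.

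The main obstacle I expect is purely bookkeeping: arranging $K$, $L$, and $c$ so that the contrapositive of Theorem~\ref{thm:transversality} produces precisely the ceiling bound, and checking that $d+c$ lies in the range where topological Tverberg applies to the enveloping simplex $L = \Delta_{N-1}$. Once the setup is in place, the argument reduces to a codimension count combined with the prime-power case of topological Tverberg, with no genuinely new ingredient beyond what has already been developed earlier in the paper. The prime-power restriction on $r$ enters exclusively through this single invocation of topological Tverberg, which is why the improved bound cannot be extended verbatim to arbitrary~$r$.
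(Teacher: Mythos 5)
Your proposal is correct and follows essentially the same route as the paper: take the optimal $(K, N, d)$ from the definition of $\tcd^r(\F)$, invoke the topological Tverberg theorem for the full simplex $L=\Delta_{N-1}$ mapping to $\R^{D-1}$ with $D=\lfloor\frac{N-1}{r-1}\rfloor$, and apply the contrapositive of Theorem~\ref{thm:transversality} to get $\chi(\KG^r(K))\ge D-d$, which the same floor/ceiling bookkeeping converts into $\lceil\tcd^r(\F)/(r-1)\rceil$. The only differences are cosmetic (you phrase the threshold via $c$ rather than $D$), so nothing further is needed.
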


\begin{proof}
	By definition of topological $r$-colorabilty defect there is a map $f\colon K \longrightarrow \R^d$ that does not
	identify $r$ points from $r$ pairwise disjoint faces, where $K$ is a simplicial complex on $N$ vertices whose
	missing faces are the sets in~$\F$ and $\tcd^r(\F) = N - (r-1)(d+1)$. For any continuous map
	$F\colon \Delta_{N-1} \longrightarrow \R^{D-1}$ there are $r$ pairwise disjoint faces of $\Delta_{N-1}$ whose
	images have a common point of intersection, where $D = \lfloor \frac{N-1}{r-1} \rfloor$.
	
	By Theorem~\ref{thm:transversality} we have that $\chi(\KG^r(K)) \ge D-d$. It remains to show that $(r-1)(D-d) \ge \tcd^r(\F) = N - (r-1)(d+1)$.
	This is equivalent to $(r-1)\lfloor \frac{N-1}{r-1} \rfloor \ge N - (r-1)$, which is evidently true.
\end{proof}

We make two remarks that easily follow from~\cite[Examples~4.9 and~4.11]{frick2017}. 
Theorem~\ref{thm:top} fails for any $r$ that is not a prime power. Let $\F$ be $\binom{[n]}{k}$, the system of $k$-subsets of~$[n]$,
where $n = rk-1$ for some integer $r\ge 6$ with at least two distinct prime divisors. The hypergraph $\KG^r(\F)$ does not
contain any hyperedges and thus has chromatic number $\chi(\KG^r(\F)) = 1$. Let $K$ be the complex on vertex set $[n]$
with missing faces~$\F$, that is, $K = \Delta_{n-1}^{(k-2)}$. Suppose that $k-2 = (r-1)\ell$ for some integer $\ell \ge 3$. 
Then there is a map $f\colon K \longrightarrow \R^{r\ell}$ that does not identify $r$ points from $r$ pairwise disjoint faces~\cite{mabillard2015}.
One now easily checks that $\tcd^r(\F) \ge n-(r-1)(r\ell+1) = r$, and thus $\lfloor \frac{1}{r-1}\tcd^r(\F) \rfloor \ge 2$.

The gap $\tcd^r(\F) - \cd^r(\F)$ can be arbitrarily large. For example, let $K$ be a triangulation of a $d$-ball on $n$ vertices
and denote by $\F$ the system of its missing faces. Since $K$ embeds into $\R^d$ we have that $\tcd^2(\F) \ge n -(d+1)$,
while $\cd^2(\F) = n- 2(d+1)$, since the facets of $K$ have no subset in~$\F$.

Let $\F$ be a system of subsets of $[n]$, and let $C_1 \sqcup \dots \sqcup C_m$ be a partition of $[n]$ into $m$ sets. 
We call a set $F \in \F$ \emph{transversal} if $|F\cap C_i| \le 1$ for every~$i$. Denote by $\F_{\textrm{transversal}}$
the system of all transversal sets in~$\F$ with respect to the partition $C_1 \sqcup \dots \sqcup C_m$.

\begin{theorem} 
\label{thm:top-opt}
	Let $r\ge 2$ be a prime, and let $\F$ be a system of subsets of~$[n]$, and $C_1 \sqcup \dots \sqcup C_m$ 
	a partition of~$[n]$ into sets of size at most~$r-1$. Then
	$$\chi(\KG^r(\F_{\textrm{transversal}})) \ge \Big\lceil \frac{1}{r-1}\tcd^r(\F) \Big\rceil.$$
\end{theorem}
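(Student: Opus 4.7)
The plan is to run the argument of Theorem~\ref{thm:top} with two substitutions: replace the ambient simplex by the join $L = C_1 * \cdots * C_m$ (extended to account for auxiliary vertices), and invoke the constrained Tverberg theorem of Blagojevi\'c, Matschke, and Ziegler (Theorem~\ref{thm:bmz}) in place of the unconstrained topological Tverberg theorem, so that the Tverberg faces produced are rainbow with respect to the partition and hence automatically lie in $L$.

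First I would unpack the definition of $\tcd^r(\F)$: pick a simplicial complex $K'$ on $N$ vertices whose inclusion-minimal missing faces are the inclusion-minimal elements of $\F$, together with a continuous map $f\colon K' \longrightarrow \R^d$ avoiding $r$-fold coincidences on pairwise disjoint faces, with $\tcd^r(\F) = N - (r-1)(d+1)$. Without loss of generality $K' = K_0 * \Delta_{N-n-1}$, where $K_0$ is the complex on $[n]$ with missing faces the inclusion-minimal elements of $\F$ and the remaining $N - n$ vertices are auxiliary. I would then extend the partition $C_1 \sqcup \cdots \sqcup C_m$ by assigning each auxiliary vertex to its own singleton block (all blocks still having size at most $r - 1$), and set $L$ to be the corresponding join, whose faces are exactly the transversal subsets of the vertex set of $K'$.

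Next, form $K = K' \cap L$ and apply Theorem~\ref{thm:transversality} to the pair $K \subseteq L$. The restriction $f|_K$ inherits the absence of $r$-fold coincidences on pairwise disjoint faces, since any such coincidence would also be one in $K'$. The Tverberg hypothesis on $L$ follows from Theorem~\ref{thm:bmz}: with $D = \lfloor (N-1)/(r-1) \rfloor$, any continuous map $L \longrightarrow \R^{D-1}$ extends to the full simplex on the vertex set of $L$, and the constrained Tverberg theorem produces $r$ pairwise disjoint rainbow faces --- which are already faces of $L$ --- whose images share a common point. Theorem~\ref{thm:transversality} then yields $\chi(\KG^r(K, L)) \ge D - d$, and the routine inequality $(r-1)\lfloor (N-1)/(r-1) \rfloor \ge N - (r-1)$ converts this into the desired bound $\lceil \tcd^r(\F)/(r-1) \rceil$.

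The one delicate point, and what distinguishes this from Theorem~\ref{thm:top}, is the bookkeeping that identifies $\KG^r(K, L)$ with $\KG^r(\F_{\textrm{transversal}})$. The inclusion-minimal missing faces of $K = K' \cap L$ split into two types: the transversal inclusion-minimal elements of $\F$, which are the intended vertices; and $2$-element subsets contained in a single block $C_i$, which arise because $K$ is forced inside $L$. The second family is unwanted, but it is automatically absent from $\KG^r(K, L)$ since such a $2$-subset is not transversal and therefore not a face of $L$. Verifying this, together with the trivial observation that $K \subseteq L$ by construction so Theorem~\ref{thm:transversality} applies, is the only step requiring genuine care.
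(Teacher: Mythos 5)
Your proof is correct and follows essentially the same route as the paper, whose entire argument is to repeat the proof of Theorem~\ref{thm:top} with the Blagojevi\'c--Matschke--Ziegler colored Tverberg theorem (Theorem~\ref{thm:bmz}) applied to the simplex with all intra-block edges deleted. You additionally spell out bookkeeping the paper leaves implicit --- treating auxiliary vertices as singleton blocks, passing to $K' \cap L$ so that Theorem~\ref{thm:transversality} applies to a genuine pair $K \subseteq L$, and identifying $\KG^r(K,L)$ with $\KG^r(\F_{\textrm{transversal}})$ --- all of which is handled correctly.
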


\begin{proof}
	Repeat the proof of Theorem~\ref{thm:top}, but now apply Theorem~\ref{thm:bmz} to the simplicial complex
	obtained from the simplex on the vertex set of~$K$ by deleting all edges with both endpoints in one~$C_i$.
\end{proof}

Theorem~\ref{thm:top-opt} is wrong for any $r$ with at least two distinct prime divisors since the weaker Theorem~\ref{thm:top}
is wrong for those~$r$. We conjecture that the primality of $r$ is required. 
Theorem~\ref{thm:top-opt} is a common generalization and extension of the results of K\v r\' i\v z (for $r$ a prime) and Schrijver as well as the
hypergraph version of Schrijver's result, Theorem~\ref{thm:hyp-schrijver}. The proof of Theorem~\ref{thm:hyp-schrijver} shows
that it is implied by Theorem~\ref{thm:top-opt} (for $r \ge 3$ a prime), where we bound $\tcd^r$ from below by exhibiting an affine 
map that sends vertices to points along the moment curve, and sets $C_i$ of the partition have size one, except for one set that
contains $1$ and~$n$. For the $r=2$ setting we are not allowed to have sets $C_i$ of size two, and we can only 
restrict to almost $2$-stable sets. Thus, Theorem~\ref{thm:gen-schrijver} is also an easy corollary of Theorem~\ref{thm:top-opt}.

We will now prove tight lower bounds for chromatic numbers of $s$-stable Kneser hypergraphs.
For a graph $G$ and a vertex $v$ of~$G$ denote by $N(v)$ the \emph{neighborhood of~$v$}, that is, all vertices at distance one
from~$v$. Similarly denote by $N^2(v)$ the set of vertices at distance precisely two from~$v$. Engstr\"om proved the following
Tverberg-type result for sparse subcomplexes of a simplex determined by a graph $G$ of forbidden edges.

\begin{theorem}[Engstr\"om~\cite{engstrom2011}]
\label{thm:engstrom}
	Let $r \ge 2$ be a prime power, $d \ge 1$ an integer, and $N =(r-1)(d+1)$. Let $G$ be a graph on the vertex set of~$\Delta_N$
	that satisfies $2|N(v)|+|N^2(v)| < r$ for every vertex~$v$. Then for any continuous map $f \colon \Delta_N \longrightarrow \R^d$ there are $r$
	pairwise disjoint faces $\sigma_1, \dots, \sigma_r$ of~$\Delta_N$ such that $f(\sigma_1) \cap \dots \cap f(\sigma_r) \ne \emptyset$
	and no $\sigma_i$ contains both endpoints of an edge of~$G$.
\end{theorem}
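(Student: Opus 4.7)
The plan is to apply the standard configuration-space/test-map paradigm for topological Tverberg-type theorems. Let $K$ denote the independence complex of $G$ on vertex set $[N+1]$, i.e.\ the subcomplex of $\Delta_N$ consisting of those faces containing no edge of~$G$. The conclusion of the theorem is equivalent to: for every continuous $f\colon \Delta_N \to \R^d$, there exist $r$ pairwise disjoint faces of $K$ whose $f$-images share a common point. By the Sarkaria--Volovikov deleted-join reformulation, this follows if the $r$-fold $2$-wise deleted join $K^{*r}_\Delta$, with its natural free $\Z/r$-action, admits no $\Z/r$-equivariant map to the unit sphere $S(W_r^{\oplus(d+1)})$, where $W_r$ is the standard $(r-1)$-dimensional real representation of~$\Z/r$. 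The target sphere has dimension $(r-1)(d+1)-1=N-1$, so by Volovikov's lemma (valid for $r$ a prime power) it suffices to establish the following claim.

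\begin{claim*}
The deleted join $K^{*r}_\Delta$ is $(N-1)$-connected.
\end{claim*}

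The ambient complex $(\Delta_N)^{*r}_\Delta$ is the iterated join of $N+1$ copies of the discrete $r$-point space, hence a wedge of $(r-1)^{N+1}$ copies of~$S^N$, and in particular is $(N-1)$-connected. The subcomplex $K^{*r}_\Delta$ is obtained by deleting exactly those faces $S \subseteq [N+1]\times[r]$ such that, for some $i\in[r]$, the row-slice $S_i := \{\,v : (v,i) \in S\,\}$ contains an edge of~$G$. My plan is to equip the deleted portion $(\Delta_N)^{*r}_\Delta \setminus K^{*r}_\Delta$ with an acyclic discrete Morse matching whose unmatched (critical) cells all have dimension at least~$N+1$. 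Such a matching implies that the inclusion $K^{*r}_\Delta \hookrightarrow (\Delta_N)^{*r}_\Delta$ is an $N$-equivalence, yielding the required $(N-1)$-connectivity.

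The matching is defined by a local move at the first offending position. Given a face $S$ in the deleted portion, let $(v,i)$ be the lexicographically smallest vertex-colour pair with $(v,i)\in S$ such that $v$ is $G$-adjacent to some $u$ with $(u,i)\in S$. Attempt to pair $S$ with the face obtained by relocating $v$ from colour~$i$ to an alternative colour~$j\in[r]\setminus\{i\}$; that is, replace $(v,i)$ by $(v,j)$ in~$S$. This is legal provided $(v,j)\notin S$, that row~$j$ after the move contains no $G$-edge incident to~$v$, and that it does not conflict with a pairing already performed at a lexicographically earlier pair. A careful count shows that each neighbour $u\in N(v)$ appearing in $S$ blocks two colours (one from its own current colour, one from the symmetric counter-move), while each distance-$2$ vertex $w\in N^2(v)$ appearing in $S$ blocks one further colour. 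The sparsity hypothesis $2|N(v)| + |N^2(v)| < r$ therefore guarantees that a legal~$j$ always exists.

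The main obstacle I expect is verifying acyclicity of the matching and getting the blocked-colour count to match the hypothesis $2|N(v)|+|N^2(v)|<r$ exactly. The delicate point is justifying why distance-$1$ neighbours contribute a factor of $2$ while distance-$2$ vertices contribute only~$1$ — this requires tracking both forward and reverse matching rules simultaneously and verifying that the resulting Morse flow decreases a lexicographic monovariant on $(v,i)$-pairs, so that no directed cycle can arise in the modified Hasse diagram. Assuming this matching is carried through, critical cells must occupy all $N+1$ rows (since any under-filled row admits a pairing move on an unused colour), putting them in dimension $\geq N+1$ as needed.
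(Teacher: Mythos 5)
The paper does not prove this theorem: it is imported verbatim from Engstr\"om's article, so there is no internal proof to compare against, and your attempt has to be judged on its own. Your reduction is the standard (and correct) one: with $K$ the independence complex of $G$, the conclusion follows from Volovikov's theorem once one knows that the deleted join $K^{*r}_\Delta$ is $(N-1)$-connected (or $(N-1)$-acyclic over $\Z/p$), and that connectivity statement is precisely the content of Engstr\"om's work. One repair to the reduction: for $r=p^k$ with $k\ge 2$ you must use the elementary abelian group $(\Z/p)^k\le\Sym_r$ acting by translations rather than $\Z/r$, since $S(W_r^{\oplus(d+1)})$ is only fixed-point free (not free) for composite $r$ and Lemma~\ref{lem:volovikov} requires an elementary abelian group.

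The genuine gap is your Claim, and the discrete Morse sketch offered for it fails structurally in two places. First, the ``relocation'' move $S\mapsto (S\setminus\{(v,i)\})\cup\{(v,j)\}$ preserves cardinality, so it never pairs a face with a facet or cofacet; it is not a matching on the face poset and discrete Morse theory does not apply to it. It can be factored through the codimension-one face $S\setminus\{(v,i)\}$, but then one must choose which of the two covering relations to match, and the deferred bookkeeping (why a neighbour blocks two colours, a second neighbour one, and why no directed cycle arises) is exactly the substance of the theorem --- the inequality $2|N(v)|+|N^2(v)|<r$ has to emerge from that analysis, not be assumed to. Second, the critical-cell bound is impossible as stated: a face of $(\Delta_N)^{*r}_\Delta$ has at most one element per row, so a face ``occupying all $N+1$ rows'' has exactly $N+1$ elements and dimension $N$, and indeed no face has dimension $\ge N+1$. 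A perfect matching (no critical cells) is also ruled out: it would force $H_N(K^{*r}_\Delta)\cong H_N\bigl((\Delta_N)^{*r}_\Delta\bigr)$, which fails as soon as $G$ has an edge. So the critical cells live in dimension at most $N$, and attaching $N$-cells to $K^{*r}_\Delta$ to recover the wedge of $N$-spheres only yields $\pi_i(K^{*r}_\Delta)=0$ for $i\le N-2$, because the boundary map $H_N\bigl((\Delta_N)^{*r}_\Delta,K^{*r}_\Delta\bigr)\to H_{N-1}(K^{*r}_\Delta)$ need not vanish. The connectivity you need therefore does not follow from the argument as proposed; you would have to either control the attaching maps of the dimension-$N$ critical cells or, as Engstr\"om does, prove the connectivity of the independence complex of the graph on $V(\Delta_N)\times[r]$ (encoding both disjointness and the forbidden edges of $G$) by a different local argument.
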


Using this result we obtain a strengthening of Theorem~\ref{thm:top}:

\begin{theorem}
\label{thm:top-stable}
	Let $s \ge 2$ be an integer and $r > 6s-6$ a prime power. Then for any system $\F$ of subsets of $[n]$ we have that
	$$\chi(\KG^r(\F_{s-\textrm{stab}})) \ge \Big\lceil \frac{\tcd^r(\F)}{r-1} \Big\rceil.$$
\end{theorem}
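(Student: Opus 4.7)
The plan is to mimic the proof of Theorem~\ref{thm:top}, with Engström's refinement Theorem~\ref{thm:engstrom} replacing the topological Tverberg theorem so that $s$-stability can be enforced by deleting edges from the ambient simplex.

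First, let $K$ be a simplicial complex on $N$ vertices witnessing $\tcd^r(\F) = N - (r-1)(d+1)$: its missing faces are precisely $\F$ (taken to be an antichain, following the paper's standing convention) and there is a continuous $f\colon K \longrightarrow \R^d$ without $r$-fold coincidences among pairwise disjoint faces. Let $G$ be the graph on the vertex set of $K$ whose edges are the pairs $\{i,j\} \subseteq [n]$ at cyclic distance less than~$s$. The neighbors of $v \in [n]$ in $G$ lie at cyclic distances $1, \ldots, s-1$ and second-neighbors at cyclic distances $s, \ldots, 2s-2$, so $|N_G(v)| \le 2(s-1)$ and $|N_G^2(v)| \le 2(s-1)$, which together give
$$2|N_G(v)| + |N_G^2(v)| \le 6s-6 < r$$
by hypothesis. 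Any vertex of $K$ outside $[n]$ is isolated in $G$ and satisfies the bound trivially.

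Next, let $L$ be the simplicial complex obtained from the full simplex on the vertex set of $K$ by deleting every edge of~$G$; its faces are the subsets whose intersection with $[n]$ is $s$-stable. Set $K' = K \cap L$, so that $K' \subseteq L$. Since $\F$ is an antichain, a direct check shows that the missing faces of $K'$ lying in $L$ are precisely the $s$-stable elements of~$\F$, hence $\KG^r(K', L) = \KG^r(\F_{s-\textrm{stab}})$. The restriction $f|_{K'}\colon K' \longrightarrow \R^d$ retains the no-$r$-fold-coincidence property on pairwise disjoint faces, since such families in $K'$ are also such families in~$K$.

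Set $D = \lfloor (N-1)/(r-1) \rfloor$. Every continuous $F\colon L \longrightarrow \R^{D-1}$ identifies $r$ points from $r$ pairwise disjoint faces of~$L$: extend $F$ to a continuous $\tilde F\colon \Delta_{N-1} \longrightarrow \R^{D-1}$ using contractibility of the target, pick any $S \subseteq [N]$ with $|S| = (r-1)D+1$ (possible since $(r-1)D + 1 \le N$), and apply Theorem~\ref{thm:engstrom} to $\tilde F|_{\Delta_S}$ with the induced subgraph $G|_S$, which inherits the local degree bound. The resulting $r$ pairwise disjoint faces avoid all edges of $G$ and are therefore faces of~$L$. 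Applying Theorem~\ref{thm:transversality} to $K' \subseteq L$ with $c = D-1-d$ now yields $\chi(\KG^r(\F_{s-\textrm{stab}})) \ge D-d$, and the standard inequality $(r-1)D \ge N - (r-1)$ converts this into $(r-1)(D-d) \ge \tcd^r(\F)$, as required. The main subtle point is the identification $\KG^r(K', L) = \KG^r(\F_{s-\textrm{stab}})$, which relies on $\F$ being an antichain; the remainder is a close parallel to the proof of Theorem~\ref{thm:top}.
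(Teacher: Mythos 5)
Your proof is correct and follows essentially the same route as the paper: the same constraint graph $G$ on $[n]$, the same complex $L$ obtained by deleting its edges from the simplex, and the same combination of Engstr\"om's theorem with Theorem~\ref{thm:transversality}, followed by the identical arithmetic. Your replacement of $K$ by $K' = K \cap L$ (so that the hypothesis $K' \subseteq L$ of Theorem~\ref{thm:transversality} is literally satisfied, with the observation that $f|_{K'}$ inherits the no-coincidence property and that the missing faces of $K'$ in $L$ are still exactly $\F_{s-\textrm{stab}}$) is a small technical refinement that the paper glosses over, but it does not change the argument.
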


\begin{proof}
	The proof is the same as for Theorem~\ref{thm:top} with the strengthening that instead of concluding that
	$F\colon\Delta_{N-1} \longrightarrow \R^{D-1}$ identifies $r$ points from $r$ pairwise disjoint faces, we now
	apply the proper extension of Theorem~\ref{thm:engstrom}:	
	Let $G$ be the graph on vertex set~$[n]$ with an edge $(i,j)$ precisely if $i$ and $j$ are at distance at most $s-1$ in the
	cyclic order on~$[n]$. Notice that for every vertex $v$ of $G$ the neighborhood $N(v)$ has cardinality $2s-2$
	and $N^2(v)$ has cardinality $2s-2$ as well.
	Let $L$ be the simplicial complex obtained from the simplex on vertex set $[N]$ by deleting the 
	edges of~$G$. The missing faces of $K$ that are contained in~$L$ are precisely the $s$-stable sets in~$\F$. 	
	By Theorem~\ref{thm:engstrom} any continuous map $F\colon L \longrightarrow \R^{D-1}$ identifies $r$ points from 
	$r$ pairwise disjoint faces. 
\end{proof}

Since $\tcd^r(\binom{[n]}{k}) = n-r(k-1)$ -- this is the content of the proof of Theorem~\ref{thm:afl} -- we obtain as an immediate consequence:

\begin{corollary}
\label{cor:s-stable}
	Let $s \ge 2$ be an integer and $r > 6s-6$ a prime power. Then for integers $k \ge 1$ and $n \ge rk$ we have that
	$$\chi(\KG^r(k,n)_{s-\textrm{stab}}) = \Big\lceil \frac{n-r(k-1)}{r-1} \Big\rceil.$$
\end{corollary}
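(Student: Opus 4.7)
The plan is to deduce Corollary~\ref{cor:s-stable} as a direct application of Theorem~\ref{thm:top-stable} to the set system $\F = \binom{[n]}{k}$, combined with the standard greedy upper bound. The bulk of the technical content is already packaged inside Theorem~\ref{thm:top-stable}; what remains is essentially an arithmetic check that $\tcd^r(\binom{[n]}{k}) \ge n - r(k-1)$, plus the easy upper bound.

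For the lower bound, I would first verify the claim $\tcd^r(\binom{[n]}{k}) \ge n - r(k-1)$ by replaying the construction from the proof of Theorem~\ref{thm:afl}. Choose $d$ to be the largest integer with $(r-1)d \le r(k-1)-1$ and set $t = (r-1)d - r(k-2) - 1 \ge 0$. Let $K = \Delta_{n-1}^{(k-2)} * \Delta_{t-1}$, a simplicial complex on $N = n+t$ vertices whose inclusion-minimal missing faces are precisely the $k$-subsets of $[n]$. Any $r$ pairwise disjoint faces of $K$ jointly use at most $r(k-1) + t = (r-1)(d+1)$ vertices, so a generic linear map $f \colon K \to \R^d$ cannot identify points from $r$ pairwise disjoint faces, by a codimension count. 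Hence, from the definition of topological colorability defect,
$$\tcd^r\Big(\binom{[n]}{k}\Big) \ge N - (r-1)(d+1) = n + t - (r-1)(d+1) = n - r(k-1).$$
Plugging $\F = \binom{[n]}{k}$ into Theorem~\ref{thm:top-stable}, which applies because $s \ge 2$ and $r > 6s - 6$ is a prime power, one immediately obtains
$$\chi(\KG^r(k,n)_{s-\textrm{stab}}) \ge \Big\lceil \frac{\tcd^r(\binom{[n]}{k})}{r-1} \Big\rceil \ge \Big\lceil \frac{n - r(k-1)}{r-1} \Big\rceil.$$

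For the matching upper bound I would observe that $\KG^r(k,n)_{s-\textrm{stab}}$ is an induced subhypergraph of $\KG^r(k,n)$, so it suffices to properly color the latter with $c = \lceil (n - r(k-1))/(r-1) \rceil$ colors. Partition the interval $[n]$ into $c-1$ consecutive blocks $I_1, \dots, I_{c-1}$ of length $r-1$ followed by a tail $I_c$ of length $n - (c-1)(r-1) \le rk - 1$ (this inequality is equivalent to the definition of $c$). Color a $k$-set $\sigma$ by the index $i$ of the block containing $\min \sigma$. For $i < c$, any $r$ pairwise disjoint $k$-sets of color $i$ would require $r$ distinct minima in $I_i$, contradicting $|I_i| = r-1$; for $i = c$, the $r$ disjoint sets would live inside the $\le rk-1$ elements of $I_c \cup \{i \in [n] : i > \min I_c\}$, which cannot accommodate $rk$ elements, again a contradiction.

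The main ``obstacle'' is really no obstacle at all, which is why the author records this as a corollary: all of the equivariant-topology input sits inside Theorem~\ref{thm:engstrom} and its deployment in Theorem~\ref{thm:top-stable}, and the geometric-transversality input is encapsulated in the definition of $\tcd^r$. The only step demanding care is the arithmetic in the codimension computation yielding $\tcd^r(\binom{[n]}{k}) \ge n-r(k-1)$, but since this computation is identical to the one already carried out in the proof of Theorem~\ref{thm:afl}, no new ideas are needed.
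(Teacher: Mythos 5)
Your proposal is correct and follows exactly the route the paper intends: it establishes $\tcd^r\bigl(\binom{[n]}{k}\bigr) \ge n-r(k-1)$ by the same join-with-$\Delta_{t-1}$ construction used in the proof of Theorem~\ref{thm:afl}, feeds this into Theorem~\ref{thm:top-stable}, and closes with the standard block coloring for the upper bound. The paper treats all of this as immediate, so your write-up merely makes explicit the arithmetic and the greedy coloring that the author leaves to the reader.
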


Certainly our methods can be pushed further with some additional effort. The special structure of the constraint graph for
the Tverberg-type result needed to prove Corollary~\ref{cor:s-stable} could enable us in principle to get the same result
for $r \sim 2s$ by topological connectivity alone.
Conjecture~\ref{conj:stable} is widely believed to be true. Here we conjecture its generalization to arbitrary set systems:

\begin{conjecture}
	Let $r \ge 3$ and let $\F$ be a set system. Then $$\chi(\KG^r(\F_{r-\textrm{stab}})) \ge  \Big\lceil \frac{\cd^r(\F)}{r-1} \Big\rceil.$$
\end{conjecture}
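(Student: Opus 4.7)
The plan is to mirror the strategy of Theorem~\ref{thm:top-stable} and reduce the conjecture to a suitable constrained Tverberg-type result in which the forbidden edges encode cyclic $r$-stability. Since $\cd^r(\F) \le \tcd^r(\F)$, it suffices to prove the ostensibly stronger statement $\chi(\KG^r(\F_{r-\textrm{stab}})) \ge \lceil \tcd^r(\F)/(r-1) \rceil$ for $r$ a prime power, and then lift to general $r \ge 3$ by the K\v r\' i\v z-style induction on prime-power factorizations carried out at the end of the proof of Theorem~\ref{thm:hyp-schrijver}, which only uses Theorem~\ref{thm:kriz} as a black box and survives here because any $r$-stable set is $s$-stable for every $s \le r$.

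Fix $r$ a prime power. By definition of $\tcd^r$ there is a simplicial complex $K$ on $N = (r-1)(d+1) + \tcd^r(\F)$ vertices whose missing faces are the inclusion-minimal sets of $\F$, together with a continuous map $f\colon K \longrightarrow \R^d$ that does not identify $r$ points from $r$ pairwise disjoint faces. Identify the vertex set with $[N]$ in cyclic order, let $G$ be the graph with $\{i,j\} \in G$ iff $i$ and $j$ are at cyclic distance strictly less than $r$, and let $L$ be the simplicial complex obtained from $\Delta_{N-1}$ by deleting every edge of $G$. The missing faces of $K$ contained in $L$ are precisely the $r$-stable sets in $\F$, so $\KG^r(K,L) = \KG^r(\F_{r-\textrm{stab}})$. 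Setting $D = \lfloor (N-1)/(r-1) \rfloor$, the same arithmetic as in Theorems~\ref{thm:top} and~\ref{thm:top-stable} shows that the desired bound follows from Theorem~\ref{thm:transversality} once the following constrained Tverberg claim is established: for every continuous map $F\colon L \longrightarrow \R^{D-1}$ there exist $r$ pairwise disjoint faces $\sigma_1, \dots, \sigma_r$ of $L$ with $F(\sigma_1) \cap \cdots \cap F(\sigma_r) \ne \emptyset$.

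The main obstacle is proving this constrained Tverberg theorem. Engstr\"om's sufficient condition (Theorem~\ref{thm:engstrom}) demands $2|N(v)| + |N^2(v)| < r$, but for our cyclic $G$ every vertex satisfies $2|N(v)| + |N^2(v)| = 6r-6$, so no existing Tverberg-with-constraints result in the literature applies directly. The route I would pursue is the equivariant configuration-space/test-map paradigm: show that no $\Sym_r$-equivariant map exists from the $2$-wise deleted $r$-fold join $L^{*r}_{\Delta(2)}$ to the unit sphere $S(W^{\oplus D})$, where $W$ denotes the standard $(r-1)$-dimensional representation of $\Sym_r$. The key leverage unavailable to Engstr\"om is that $G$ carries an extra $\Z/N$ cyclic symmetry commuting with $\Sym_r$; combining the two actions should sharpen a Fadell--Husseini index computation, or alternatively enable a discrete-Morse or shelling analysis of $L^{*r}_{\Delta(2)}$ adapted to cyclic windows of length $r-1$, pushing its connectivity up to the critical range. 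Since the specialization $\F = \binom{[n]}{k}$ already recovers the widely open Conjecture~\ref{conj:stable}, any concrete plan is inherently speculative; a pragmatic intermediate target is to verify the constrained Tverberg claim for $r$ a sufficiently large prime power by quantitatively strengthening Engstr\"om's bound on the cyclic graph $G$, and then bootstrap to general $r$ via Lemma~\ref{lem:primes} and the set-system induction from Theorem~\ref{thm:hyp-schrijver}.
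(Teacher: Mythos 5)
This statement is posed in the paper as an open conjecture; the paper offers no proof of it, and explicitly notes that its specialization to $\F = \binom{[n]}{k}$ is Conjecture~\ref{conj:stable} of Ziegler and Alon--Drewnowski--\L uczak, which is open except for $r$ a power of two. So there is nothing in the paper to compare your argument against, and what you have written is not a proof but a reduction of one open problem to another. The reduction itself --- pass to $\tcd^r$, delete the edges of the cyclic graph $G$ of pairs at distance less than $r$ to form $L$, and feed the resulting constrained Tverberg statement into Theorem~\ref{thm:transversality} --- is exactly the framework of Theorem~\ref{thm:top-stable}, and your bookkeeping is consistent with it. But the entire content of the conjecture is concentrated in the constrained Tverberg claim you isolate, and as you yourself compute, Engstr\"om's criterion fails decisively there: $2|N(v)|+|N^2(v)| = 6r-6$ is never less than $r$. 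The ideas you sketch for closing this gap (exploiting the extra $\Z/N$ symmetry in a Fadell--Husseini computation, discrete Morse theory on $L^{*r}_{\Delta(2)}$) are directions, not arguments; nothing in the paper or the cited literature supplies the needed Tverberg-type theorem.

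There is a second, more concrete error in your lifting to non-prime-power $r$. The K\v r\' i\v z-style induction in the proof of Theorem~\ref{thm:hyp-schrijver} works because the stability parameter is \emph{fixed} at $s=2$ while $r=pq$ is factored: the induction hypothesis at level $q$ produces $q$ pairwise disjoint $2$-stable sets, which is precisely what is needed at level $r$. In your setting the stability parameter equals $r$ and therefore changes under the factorization: the hypothesis for $\KG^q(\cdot_{q-\textrm{stab}})$ produces $q$-stable sets, and $q$-stable does \emph{not} imply $pq$-stable. Your remark that any $r$-stable set is $s$-stable for $s \le r$ is the inclusion in the unhelpful direction --- it says $\F_{r-\textrm{stab}} \subseteq \F_{q-\textrm{stab}}$, so the induction controls a larger hypergraph than the one you must color, and a lower bound for the larger hypergraph says nothing about the subhypergraph. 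Likewise Lemma~\ref{lem:primes} keeps $s$ fixed while multiplying $r$ by $p$, so it would yield bounds for $\KG^{pr}(k,n)_{r-\textrm{stab}}$, not for $\KG^{pr}(k,n)_{pr-\textrm{stab}}$. The only known composition mechanism for the genuinely $r$-stable case is the Alon--Drewnowski--\L uczak reduction, which requires the full conjecture for each prime factor separately --- exactly what is unavailable. So even granting your unproven Tverberg claim for all prime powers, the case of general $r \ge 3$ would remain open.
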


\section{Lower bounds for chromatic numbers via equivariant topology}
\label{sec:box}

The methods presented in the preceding sections have the shortcoming that they only apply to hypergraphs that are represented as Kneser 
hypergraphs, with vertices in correspondence with missing faces of a simplicial complex and hyperedges that contain $r$-tuples
of pairwise disjoint missing faces. Here we will remedy this shortcoming by lifting our construction to a box complex construction
for hypergraphs. Our results turn out to again improve upon the general lower bounds obtained by K\v r\' i\v z. This section also aims to
clarify how the methods of the preceding sections relate to the more classical box complex constructions.

Let $H$ be an $r$-uniform hypergraph. Let $\overline{H}$ be the smallest simplicial complex containing~$H$, that is, for any
hyperedge $\sigma \in H$ the complex $\overline{H}$ contains all subsets of~$\sigma$.
Denote by 
\begin{align*}
B(H) = \{A_1 \times \{1\} \cup \dots \cup A_r \times \{r\}
\: | \: A_i \subseteq V(H), A_i \cap A_j = \emptyset \ \text{for} \ i \ne j, \\
\sigma \in \overline H \ \text{for all} \ \sigma \subset V(H) \ \text{with} \ |\sigma\cap A_i| \le 1\}
\end{align*}
the \emph{box complex of~$H$}. In particular, the vertex set of $B(H)$ consists of $r$ copies of the vertex set of~$H$, provided that $H$
does not have isolated vertices.

Topological box complex approaches (or the more general Hom-complexes) to finding lower bounds for chromatic numbers of graphs are standard; see 
Matou\v sek and Ziegler~\cite{matousek2002} for a survey. Extensions to hypergraphs are not new either; see 
Lange and Ziegler~\cite{lange2007}, Iriye and Kishimoto~\cite{iriye2013}, and Alishahi~\cite{alishahi2016}. Our contribution is a linearization of the theory that also
works for $r$ that are not powers of primes; see Theorem~\ref{thm:equivariant}. Moreover, Theorem~\ref{thm:maps} explains the 
relation of our methods to the usual box complex approach.

\begin{lemma}[Volovikov~\cite{volovikov1996-2}]
\label{lem:volovikov}
	Let $p$ be a prime and $G = (\Z/p)^n$ an elementary abelian $p$-group. Suppose that $X$ and $Y$ are fixed-point free 
	$G$-spaces such that $\widetilde{H}^i(X;\Z/p) \cong 0$ for all $i \le n$ and $Y$ is an $n$-dimensional cohomology sphere 
	over~$\Z/p$. Then there does not exist a $G$-equivariant map $X \longrightarrow Y$.
\end{lemma}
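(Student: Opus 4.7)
The plan is to argue by contradiction via the Borel construction and equivariant cohomology with $\Z/p$ coefficients, in the spirit of Hsiang and Borel. Assume for contradiction that a $G$-equivariant map $f\colon X \longrightarrow Y$ exists, and pass to the associated map of Borel constructions $f_G\colon X_G = EG \times_G X \longrightarrow EG \times_G Y = Y_G$ over $BG = B(\Z/p)^n$. Both sides come equipped with Serre spectral sequences converging to $H^*_G(-;\Z/p)$, and $f_G$ induces a morphism of these spectral sequences that is compatible with the edge maps from $H^*(BG;\Z/p)$.

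First I would exploit the acyclicity hypothesis on $X$. Since $\widetilde H^i(X;\Z/p) = 0$ for $i \le n$, the Serre spectral sequence for $X \hookrightarrow X_G \to BG$ has its $E_2$-page concentrated on the bottom row $q = 0$ in the range $0 \le q \le n$, so the pullback $H^i(BG;\Z/p) \to H^i(X_G;\Z/p)$ is an isomorphism for $i \le n$ and injective for $i = n+1$. Next I would analyze $Y$: since $Y$ is an $n$-dimensional $\Z/p$-cohomology sphere, the Serre spectral sequence for $Y \hookrightarrow Y_G \to BG$ is supported on rows $q = 0$ and $q = n$, and its only possibly nontrivial differential is the transgression $\tau\colon H^n(Y;\Z/p) \longrightarrow H^{n+1}(BG;\Z/p)$. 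Call $e = \tau(\iota) \in H^{n+1}(BG;\Z/p)$ the resulting Euler-type class.

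The contradiction then comes from naturality. The class $e$ lives in $H^{n+1}(BG;\Z/p)$ and, under the edge map coming from $Y_G$, classifies an obstruction to a section-up-to-row-$n$ of $Y_G \to BG$. The existence of $f_G$ would force $e$ to become trivial upon pullback to $H^{n+1}(X_G;\Z/p)$: more precisely, naturality of the transgression applied to $f_G$ shows that the pullback of $e$ to $H^{n+1}(X_G;\Z/p)$ is a permanent cycle that is a coboundary, hence zero. But the injection $H^{n+1}(BG;\Z/p) \hookrightarrow H^{n+1}(X_G;\Z/p)$ established above then forces $e = 0$ already in $H^{n+1}(BG;\Z/p)$.

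The main obstacle is therefore showing that $e \neq 0$, which is where fixed-point freeness and the elementary abelian hypothesis enter essentially. I would argue by Borel localization: the cohomology ring $H^*(B(\Z/p)^n;\Z/p)$ is a (possibly polynomial-tensor-exterior) algebra in which the product of the $n$ fundamental one-dimensional Euler classes, suitably taken, is nonzero, and the freeness/fixed-point-freeness of the $G$-action on $Y$ guarantees that after inverting these classes $H^*_G(Y;\Z/p)$ becomes zero — equivalently, $e$ cannot lie in the kernel of localization, so it is nonzero. This step, which invokes Quillen-style localization for elementary abelian $p$-groups together with the $p$-torus structure theorem, is the technical heart of the argument; everything else is bookkeeping with two Serre spectral sequences and naturality of $\tau$.
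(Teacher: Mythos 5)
The paper does not prove this lemma at all: it is imported verbatim from Volovikov's 1996 paper as a black box, so there is no internal proof to compare against. Your sketch is essentially Volovikov's own argument (he packages your ``bottom-row injectivity for $X_G$'' as a numerical index $i_G(X)$ and proves its monotonicity under $G$-maps, but the content is exactly your two Serre spectral sequences over $BG$ plus naturality), and the overall structure is sound: acyclicity of $X$ through degree $n$ gives injectivity of $H^{n+1}(BG;\Z/p)\to H^{n+1}(X_G;\Z/p)$; the two-row spectral sequence for $Y_G$ produces the transgressed class $e\in H^{n+1}(BG;\Z/p)$, which dies in $H^{n+1}(Y_G;\Z/p)$ and hence, via $f_G$, in $H^{n+1}(X_G;\Z/p)$; so $e=0$.

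The one place your write-up is genuinely loose is the final step, $e\neq 0$. The phrase ``$e$ cannot lie in the kernel of localization'' does not parse: $e$ lives in $H^{n+1}(BG;\Z/p)$, where localization at the Euler classes kills nothing relevant, so no contradiction arises from $e$ alone. The correct deduction is contrapositive: if $e=0$, then the two-row spectral sequence for $Y_G$ collapses at $E_2$, the edge map exhibits $H^*_G(Y;\Z/p)$ as a free $H^*(BG;\Z/p)$-module on generators in degrees $0$ and $n$, and hence $S^{-1}H^*_G(Y;\Z/p)\neq 0$; but $Y^G=\emptyset$ and the localization theorem for $(\Z/p)^n$ give $S^{-1}H^*_G(Y;\Z/p)\cong S^{-1}H^*_G(Y^G;\Z/p)=0$, a contradiction. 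Two hypotheses should also be made explicit rather than assumed silently: the coefficient system $\mathcal{H}^n(Y;\Z/p)$ on $BG$ is trivial because a $p$-group acts trivially on $\Z/p$ (as $|\mathrm{Aut}(\Z/p)|=p-1$ is prime to $p$), and the localization theorem needs a finiteness hypothesis on $Y$ (finite-dimensional paracompact, or finite $G$-CW), which Volovikov assumes and which holds in all applications in this paper. Finally, note that fixed-point freeness of $X$ is never used in your argument, and indeed it is not needed; it is carried along in the statement only because that is how the lemma is quoted.
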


We denote by the standard representation of the symmetric group $S_r$ by~$W_r$, that is, $$W_r = \{(x_1, \dots, x_r) \in \R^r
\: : \: \sum x_i = 0\}$$ with the action by $S_r$ that permutes coordinates. We denote by $K^{*r}$ the \emph{$r$-fold join} of a
simplicial complex~$K$, whose faces are joins $\sigma_1 * \dots * \sigma_r$ of $r$ faces of~$K$. If the faces $\sigma_i$ are
required to be $s$-wise disjoint, that is, any $s$ of them have no vertex in common, then the resulting complex is denoted
$K^{*r}_{\Delta(s)}$, the \emph{$s$-wise deleted $r$-fold join}. We write $K^{*r}_\Delta$ for $K^{*r}_{\Delta(2)}$. See 
Matou\v sek~\cite{matousek2008} for details and notation.

\begin{theorem}
\label{thm:equivariant}
	If $H$ is $c$-colorable then there exists an affine $S_r$-equivariant map $B(H) \longrightarrow W_r^{\oplus c} \setminus \{0\}$.
	In particular, if $r$ is a prime power and $B(H)$ is $[(r-1)(c-1)-1]$-connected, then $\chi(H) \ge c$.
\end{theorem}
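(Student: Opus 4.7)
The plan is to construct the affine $S_r$-equivariant map $\phi\colon B(H) \to W_r^{\oplus c} \setminus \{0\}$ directly from any proper $c$-coloring $\gamma\colon V(H) \to [c]$, and then to deduce the chromatic bound by contraposition using Volovikov's lemma. Let $w_i := e_i - \frac{1}{r}\mathbf{1} \in W_r$, so that $\pi \cdot w_i = w_{\pi(i)}$ under the standard $S_r$-action on $W_r$ and the only linear relation among $w_1, \dots, w_r$ is $\sum_i w_i = 0$. On vertices of $B(H)$ define $\phi(v,i)$ to be the vector in $W_r^{\oplus c}$ with $w_i$ in the $\gamma(v)$-th block and zero elsewhere, and extend affinely to each simplex. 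Since both actions on $B(H)$ and on $W_r^{\oplus c}$ permute only the index $i$, the resulting map is $S_r$-equivariant by construction.

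Next I would verify $\phi(x) \ne 0$ for every $x \in B(H)$. Writing $x = \sum_i \sum_{v \in A_i} \lambda_{v,i}(v,i)$ inside a face $A_1 \times \{1\} \cup \dots \cup A_r \times \{r\}$ of $B(H)$, the $j$-th $W_r$-block of $\phi(x)$ equals $\sum_i \mu_{j,i}\,w_i$ with $\mu_{j,i} = \sum_{v \in A_i,\, \gamma(v) = j} \lambda_{v,i}$. If $\phi(x) = 0$, then $\sum_i \mu_{j,i} w_i = 0$ for every $j$, which by the relation $\sum_i w_i = 0$ forces $\mu_{j,i}$ to be independent of $i$. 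Choosing a colour $j$ with positive total weight then selects some $v_i \in A_i$ of colour $j$ for each $i$, and pairwise disjointness of the $A_i$ makes $\{v_1, \dots, v_r\}$ a transversal with $|\{v_1, \dots, v_r\} \cap A_i| = 1$. By the defining condition of $B(H)$ this transversal lies in $\overline H$, hence, being of cardinality $r$, is a hyperedge of $H$. But all its vertices share the colour $j$, contradicting the properness of $\gamma$.

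For the ``in particular'' claim I would argue by contraposition. Assume $\chi(H) \le c-1$; the construction above produces an $S_r$-equivariant affine map $B(H) \to W_r^{\oplus(c-1)} \setminus \{0\}$, and the target retracts $S_r$-equivariantly onto the sphere $S(W_r^{\oplus(c-1)})$ of dimension $(r-1)(c-1)-1$. Writing $r = p^k$, embed $G = (\Z/p)^k$ into $S_r$ via its regular action on $[r]$. Transitivity of $G$ on $[r]$ yields $(\R^r)^G = \mathrm{span}(\mathbf{1})$ and hence $W_r^G = 0$, so $G$ acts fixed-point-freely on the target. On $B(H)$, a global $G$-fixed point would require all coefficients $\lambda_{v,i}$ to be independent of $i$; any vertex of positive weight would then have to belong to every $A_i$, violating pairwise disjointness, so $B(H)^G = \emptyset$ as well. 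Together with the hypothesis $\widetilde H^i(B(H);\Z/p) = 0$ for $i \le (r-1)(c-1)-1$, Lemma~\ref{lem:volovikov} in its standard form excludes any $G$-equivariant map into a $\Z/p$-cohomology sphere of matching dimension, contradicting the existence of $\phi$. The single step demanding genuine care is the verification of fixed-point-freeness on $B(H)$, which the disjointness built into the box complex resolves cleanly; the rest reduces to linear algebra in $W_r$ and a black-box application of Volovikov's obstruction.
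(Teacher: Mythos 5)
Your proof is correct and follows essentially the same route as the paper: the map you build coordinate-by-coordinate (placing $w_i$ in the $\gamma(v)$-th block) is exactly the paper's $r$-fold join of the coloring map landed in $(\Delta_{c-1})^{*r}_{\Delta(r)}\cong(\partial\Delta_{r-1})^{*c}\subset W_r^{\oplus c}\setminus\{0\}$, and the concluding step is the same application of Volovikov's lemma. The only cosmetic difference is that you verify nonvanishing by direct linear algebra in $W_r$ where the paper invokes the fact that the polytope $(\partial\Delta_{r-1})^{*c}$ has the origin in its interior; your explicit check of fixed-point-freeness of the $(\Z/p)^k$-action on both sides is a welcome filling-in of details the paper leaves implicit.
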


\begin{proof}
	Let $f\colon V(H) \longrightarrow \{1, \dots, c\}$ be a proper coloring of~$H$. Think of color $i \in \{1, \dots, c\}$
	as the standard unit vector $e_i \in \R^c$. Denote by $\Delta$ the simplex on vertex set~$V(H)$. By affinely extending
	onto the faces of $\Delta$ we can think of $f$ as a simplicial map $f\colon \Delta \longrightarrow \Delta_{c-1}$.
	The box complex $B(H)$ is a subcomplex of~$\Delta^{*r}_\Delta$. Thus the $r$-fold join of~$f$ yields an
	$S_r$-equivariant map $F\colon B(H) \longrightarrow (\Delta_{c-1})^{*r}$. Since $f$ is a proper coloring of~$H$,
	the image of~$F$ is contained in~$(\Delta_{c-1})^{*r}_{\Delta(r)}$: otherwise there is a vertex $i$ of~$\Delta_{c-1}$
	(equivalently, a color in~$\{1, \dots, c\}$) and pairwise distinct vertices $v_1, \dots, v_r \in V(H)$ that are all colored
	by~$i$ and such that $\{v_1, \dots, v_r\}$ is a hyperedge of~$H$. But this is a contradiction to~$f$ being a proper
	coloring.
	
	Now, $(\Delta_{c-1})^{*r}_{\Delta(r)} \cong (\pt^{*c})^{*r}_{\Delta(r)} \cong (\pt^{*r}_{\Delta(r)})^{*c} \cong (\partial\Delta_{r-1})^{*c}
	\approx S^{(r-1)c-1}$. This join of boundaries of simplices is a polytope which can be realized with its full group of symmetries
	in $W_r^{\oplus c} \cong \R^{(r-1)c}$ such that the origin is in its interior. Hypergraph~$H$ cannot be $(c-1)$-colorable
	if there is no $S_r$-map $B(H) \longrightarrow W_r^{\oplus (c-1)} \setminus \{0\}$. In particular, such a map does not exist
	if $r$ is a prime power and $B(H)$ is $[(r-1)(c-1)-1]$-connected by Lemma~\ref{lem:volovikov}. 
\end{proof}

\begin{example}
	Let $r \ge 1$ and $c \ge 1$ be integers.
	Let $H$ be the hypergraph on vertex set~$[n]$ for $n = (r-1)(c-1)+1$ that has all $r$-element sets as hyperedges. 
	Then the chromatic number of $H$ is $\chi(H) = c$. To see that $c$ colors suffice, color the vertex set~$[n]$
	by splitting it into $c-1$ parts of size $r-1$ and color the last vertex by an additional color. The box complex $B(H)$
	is isomorphic to~${(\Delta_{n-1})^{*r}_\Delta}$. An $S_r$-equivariant map $B_r(H) \longrightarrow W_r^{\oplus c} \setminus \{0\}$
	exists if and only if $r$ has two distinct prime divisors; see \"Ozaydin~\cite{ozaydin1987}. Thus for prime powers we get the lower bound $\chi(H) \ge c$. 
	This topological approach does not yield an immediate tight lower bound for $r$ not a power of a prime. 
	However, there is no \emph{affine} $S_r$-equivariant map $B_r(H) \longrightarrow W_r^{\oplus c} \setminus \{0\}$ 
	for any~$r$. This follows from B\'ar\'any's colorful Carath\'eodory's theorem~\cite{barany1982}; see Sarkaria~\cite{sarkaria2000}. 
	Thus the linearized version in Theorem~\ref{thm:equivariant} is properly stronger than the usual topological approach.
\end{example}

The following theorem explains the relation of the approach of Section~\ref{sec:nonlocal} via geometric transversality results
to the box complex approach of Theorem~\ref{thm:equivariant}. The two approaches can be related using ideas of Sarkaria;
see in particular~\cite[Proof of Theorem~1.3]{sarkaria1991}.

\begin{theorem}
\label{thm:maps}
	Let $K \subset L$ be simplicial complexes and let $r$ be a prime. Further assume that there exists an $\Z/r$-equivariant map
	$f\colon K^{*r}_\Delta \longrightarrow S^N$, where the $N$-sphere $S^N$ has a free linear $\Z/r$-action. 
	If $\chi(\KG^r(K,L)) \le c$ then there exists an $\Z/r$-equivariant map $F\colon L^{*r}_\Delta \longrightarrow 
	S^{N+(r-1)c}$, where the $\Z/r$-action on $S^{N+(r-1)c}$ is free.
\end{theorem}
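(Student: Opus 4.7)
The plan is Sarkaria-style: combine an equivariant extension of $f$ with an auxiliary equivariant map built from the coloring. Let $M$ denote the set of missing faces of $K$ that are faces of $L$ (the vertex set of $\KG^r(K,L)$) and let $\gamma\colon M \to \{1,\dots,c\}$ be a proper $c$-coloring. I realize $S^{N+(r-1)c}$ as the unit sphere in the $\Z/r$-representation $\R^{N+1} \oplus W_r^{\oplus c}$, where $\R^{N+1}$ is the ambient linear $\Z/r$-representation of $S^N$ (fixed-point free by hypothesis) and $W_r^{\oplus c}$ is $c$ copies of the standard $(r{-}1)$-dimensional representation. Since $W_r$ has no nonzero $\Z/r$-fixed vector, both summands are fixed-point free, so the induced $\Z/r$-action on $S^{N+(r-1)c}$ is free.

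Next I build an auxiliary $\Z/r$-equivariant map $\phi\colon L^{*r}_\Delta \to W_r^{\oplus c}$ encoding the coloring. Writing a point $x \in L^{*r}_\Delta$ in barycentric coordinates $x = \sum_{v \in V(L),\, i \in [r]} u_{v,i}\,(v,i)$, set
\[
\phi(x) \;=\; \sum_{i=1}^r \sum_{\mu \in M} \Big(\prod_{v \in \mu} u_{v,i}\Big)\, \bar{e}_i^{(\gamma(\mu))},
\]
where $\bar e_i \in W_r$ is the orthogonal projection of the $i$-th standard basis vector of $\R^r$, and $\bar e_i^{(j)} \in W_r^{\oplus c}$ is $\bar e_i$ placed in the $j$-th summand. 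This is polynomial in the $u_{v,i}$, hence continuous, and $\Z/r$-equivariant since cyclic permutation of the $r$ copies of $L$ (relabeling the index $i$) matches the $\Z/r$-action $\bar e_i \mapsto \bar e_{i+1}$.

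Now extend $f$ to an arbitrary $\Z/r$-equivariant map $\tilde f\colon L^{*r}_\Delta \to \R^{N+1}$; this exists because $\R^{N+1}$ is a contractible $\Z/r$-space (concretely, extend $f$ non-equivariantly using contractibility of $\R^{N+1}$, then average over $\Z/r$, using that $f$ is already equivariant on $K^{*r}_\Delta$). Define $F(x) = \bigl(\tilde f(x),\, \phi(x)\bigr) \in \R^{N+1} \oplus W_r^{\oplus c}$. Once $F$ is shown to be nowhere zero, normalization yields the desired $\Z/r$-equivariant map $L^{*r}_\Delta \to S^{N+(r-1)c}$, and the free action on the target has already been verified.

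The main obstacle is the non-vanishing of $F$, which reduces to $\phi^{-1}(0) \subseteq K^{*r}_\Delta$. Suppose $\phi(x) = 0$. Reading off each $W_r$-summand yields, for every color $j$, an identity $\sum_{i=1}^r \alpha_{i,j}\, \bar e_i = 0$ with $\alpha_{i,j} = \sum_{\mu \in M,\ \gamma(\mu) = j} \prod_{v \in \mu} u_{v,i} \ge 0$. Because the unique linear relation among $\bar e_1, \dots, \bar e_r$ in $W_r$ is $\sum_i \bar e_i = 0$, this forces $\alpha_{1,j} = \cdots = \alpha_{r,j}$ for every $j$. If some $\alpha_{i,j} > 0$, then for each $i$ there exists a missing face $\mu_i \subseteq \mathrm{supp}(x_i)$ with $\gamma(\mu_i) = j$, and pairwise disjointness of the supports in $L^{*r}_\Delta$ makes $\mu_1,\dots,\mu_r$ pairwise disjoint missing faces sharing color $j$, contradicting properness of $\gamma$. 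Hence every $\alpha_{i,j}$ vanishes, so no missing face of $K$ lies in any $\mathrm{supp}(x_i)$; each support is then a face of $K$ and $x \in K^{*r}_\Delta$, where $\tilde f(x) = f(x)$ has unit norm, so $F(x) \ne 0$. This step is the crux because it ties together the rigid geometry of the $\bar e_i$ (controlling when an equivariant affine combination can vanish) with the combinatorial content of $\gamma$ being a proper hypergraph coloring.
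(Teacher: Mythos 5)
Your proof is correct, and it reaches the conclusion by a genuinely different technical route than the paper, though both are Sarkaria-style decompositions of the target into $\R^{N+1}\oplus W_r^{\oplus c}$ with $f$ responsible for the $K$-part of the deleted join and the coloring responsible for the rest. The paper routes the coloring through the box complex: it forms the auxiliary hypergraph $H$ of nonfaces of $K$ in $L$, invokes Theorem~\ref{thm:equivariant} to obtain an affine $S_r$-map $h\colon B(H)\to W_r^{\oplus c}\setminus\{0\}$, and assembles $F$ as an affine map on the barycentric subdivision $(L^{*r}_\Delta)'$ sending the barycenter of $\tau_1*\dots*\tau_r$ to $(h(\tau_L),f(\tau_K))$, with non-vanishing argued chain by chain. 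You instead write the coloring component as an explicit polynomial $\phi$ in barycentric coordinates---in effect re-deriving the content of Theorem~\ref{thm:equivariant} inline from the fact that the only relation among $\bar e_1,\dots,\bar e_r$ is $\sum_i\bar e_i=0$ plus properness of $\gamma$---and you extend $f$ to all of $L^{*r}_\Delta$ by a Tietze extension followed by averaging over $\Z/r$. Your route avoids the subdivision entirely and is arguably cleaner on one delicate point: in the paper's assembly the $\R^{N+1}$-coordinate of $F$ on a face of $(K^{*r}_\Delta)'$ is an affine combination of points of $S^N$, which is not automatically nonzero, whereas in your version the first coordinate only needs to be nonzero on $K^{*r}_\Delta$ itself, where $\tilde f=f$ lands exactly in $S^N$, and everywhere else $\phi\neq 0$ carries the argument. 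What the paper's route buys is the explicit appearance of $B(H)$, which is the stated purpose of the theorem (relating the transversality method to the box complex approach). Two small points worth making explicit in your write-up: freeness of the $\Z/r$-action on the unit sphere of $\R^{N+1}\oplus W_r^{\oplus c}$ follows from fixed-point freeness only because $r$ is prime, and the final normalization is equivariant because both summands carry orthogonal actions.
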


\begin{proof}
	Let $H$ be the hypergraph with vertices the nonfaces of $K$ contained in~$L$ (recall that the vertices of $\KG^r(K,L)$
	are the missing faces of $K$ in~$L$, that is, the minimal nonfaces) and hyperedges corresponding to $r$ pairwise disjoint
	nonfaces. Then $\KG^r(K,L) \subset H$ and $\chi(\KG^r(K,L)) = \chi(H)$: any vertex of $H$ is a nonface $\sigma$ of $K$ 
	that contains some missing face $\tau$ of~$K$; color $\sigma$ by the same color as~$\tau$.
	
	Denote by $\Sigma'$ the barycentric subdivision of the simplicial complex~$\Sigma$. 
	A vertex of~$(L^{*r}_\Delta)'$ corresponds to a join $\tau_1 * \dots * \tau_r$ of pairwise disjoint 
	faces in~$L$. Think of a join of faces in $L$ as the (disjoint) union~$\bigsqcup_i \tau_i \times \{i\}$. Denote by 
	$\tau_K$ the face $\bigsqcup_{i : \tau_i \in K} \tau_i \times \{i\}$ of $K^{*r}_\Delta$ and by $\tau_L$ the 
	face~${\bigsqcup_{i : \tau_i \notin K} \tau_i \times \{i\}}$. Each vertex of $B(H)$ corresponds to $\tau \times \{i\}$
	where $\tau \in L \setminus K$ and~${i \in [r]}$. Since the $\tau_i$ are pairwise disjoint, the face $\tau_L$
	is naturally a face of~$B(H)$, and thus a vertex of~$B(H)'$.
	By Theorem~\ref{thm:equivariant} there is an (affine) $S_r$-equivariant map $h\colon B(H) \longrightarrow W_r^{\oplus c} \setminus \{0\}$.
	
	Think of $S^N$ as equivariantly embedded into~$\R^{N+1}$.
	The map $F$ can now simply be defined as an affine map on $(L^{*r}_\Delta)'$ by sending 
	the vertex corresponding to the face $\tau_1 * \dots * \tau_r \in L^{*r}_\Delta$ to 
	$$(h(\tau_L), f(\tau_K)) \in W_r^{\oplus c} \oplus \R^{N+1},$$
	where we consider $\tau_L$ as a vertex of $B(H)'$ and $\tau_K$ as a vertex of~$(K^{*r}_\Delta)'$, provided that $\tau_L$
	and $\tau_K$ are nonempty. We define $h(\emptyset) = 0 = f(\emptyset)$.
	The map $F$ is $\Z/r$-equivariant and misses the origin of~${W_r^{\oplus c} \oplus \R^{N+1}}$. 
	To see the latter notice that it is not possible that both $\tau_K$ and $\tau_L$ are empty, and that a face of $(L^{*r}_\Delta)'$
	corresponds to a chain of faces $\tau^{(1)} \subset \tau^{(2)} \subset \dots \subset \tau^{(m)}$ of~$L^{*r}_\Delta$;
	the corresponding nonempty $\tau_K^{(i)}$ form a face of $(K^{*r}_\Delta)'$ (and thus $F$ does not hit $0$ in $\R^{N+1}$ 
	as long as $\tau_K^{(i)} \ne \emptyset)$, and similarly the corresponding nonempty $\tau_L^{(i)}$ form a face of~$B(H)'$
	(and thus $F$ does not hit $0$ in $W_r^{\oplus c}$ as long as $\tau_L^{(i)} \ne \emptyset)$. The map $F$ as defined 
	above thus retracts equivariantly to a map to the unit sphere~$S^{N+(r-1)c}$ in~${W_r^{\oplus c} \oplus \R^{N+1}}$.
\end{proof}

Since any map $K\longrightarrow \R^d$ that does not identify $r$ points from $r$ pairwise disjoint faces induces a
$\Z/r$-equivariant map $K^{*r}_\Delta \longrightarrow S(W_r^{\oplus(d+1)})$ as a retraction to the unit sphere of its
$r$-fold join, the results of the previous sections are special cases of Theorem~\ref{thm:maps} for $r$ a prime.
The theorem can be extended to prime powers by using fixed-point free actions of elementary abelian groups.
The methods of~\cite{frick2017} and the manuscript at hand can be seen as augmenting the box complex construction
by first constructing an equivariant map from a sufficiently large symmetric subcomplex. We avoid having to deal with 
obstruction theory by inducing these equivariant maps from continuous maps $K \longrightarrow \R^d$ that do not
identify points from $r$ pairwise disjoint faces.

\section*{Acknowledgements}

I would like to thank Fr\'ed\'eric Meunier for many helpful discussions about chromatic numbers of Kneser hypergraphs
and G\"unter M. Ziegler for good comments on a draft of this manuscript.
This material is based upon work supported by the National Science Foundation under Grant No. DMS--1440140
while the author was in residence at the Mathematical Sciences Research Institute in Berkeley, California, during the
Fall 2017 semester.

\bibliographystyle{amsplain}

\begin{thebibliography}{10}

\bibitem{alishahi2016}
Meysam Alishahi, \emph{Colorful subhypergraphs in uniform hypergraphs}, arXiv
  preprint arXiv:1605.06701 (2016).

\bibitem{alishahi2015}
Meysam Alishahi and Hossein Hajiabolhassan, \emph{{On the chromatic number of
  general Kneser hypergraphs}}, J. Combin. Theory, Ser.~B \textbf{115} (2015),
  186--209.

\bibitem{alon2009}
Noga Alon, Lech Drewnowski, and Tomasz {\L}uczak, \emph{{Stable Kneser
  Hypergraphs and Ideals in $\mathbb{N}$ with the Nikodym Property}}, Proc.
  Amer. Math. Soc. (2009), 467--471.

\bibitem{alon1986}
Noga Alon, Peter Frankl, and L{\'a}szl{\'o} Lov{\'a}sz, \emph{{The chromatic
  number of Kneser hypergraphs}}, Trans. Amer. Math. Soc. \textbf{298} (1986),
  no.~1, 359--370.

\bibitem{bajmoczy1979}
E.~G. Bajm{\'o}czy and Imre B{\'a}r{\'a}ny, \emph{{On a common generalization
  of Borsuk's and Radon's theorem}}, Acta Math. Hungar. \textbf{34} (1979),
  no.~3, 347--350.

\bibitem{barany1982}
Imre B{\'a}r{\'a}ny, \emph{{A generalization of Carath{\'e}odory's theorem}},
  Discrete Math. \textbf{40} (1982), no.~2-3, 141--152.

\bibitem{barany1981}
Imre B{\'a}r{\'a}ny, Senya~B. Shlosman, and Andr{\'a}s Sz{\H{u}}cs, \emph{On a
  topological generalization of a theorem of {T}verberg}, J. Lond. Math. Soc.
  \textbf{23} (1981), 158--164.

\bibitem{blagojevic2015}
Pavle V.~M. Blagojevi{\'c}, Florian Frick, and G{\"u}nter~M. Ziegler,
  \emph{{Barycenters of Polytope Skeleta and Counterexamples to the Topological
  Tverberg Conjecture, via Constraints}}, J. Europ. Math. Soc., to appear
  (2017).

\bibitem{blagojevic2009}
Pavle V.~M. Blagojevi{\'c}, Benjamin Matschke, and G{\"u}nter~M. Ziegler,
  \emph{{Optimal bounds for the colored Tverberg problem}}, J. Europ. Math.
  Soc. \textbf{17} (2015), no.~4, 739--754.

\bibitem{bukh2016}
Boris Bukh, Po-Shen Loh, and Gabriel Nivasch, \emph{{Classifying unavoidable
  Tverberg partitions}}, arXiv preprint arXiv:1611.01078 (2016).

\bibitem{chen2015}
Peng-An Chen, \emph{{On the Multichromatic Number of $s$-Stable Kneser
  Graphs}}, J. Graph Theory \textbf{79} (2015), no.~3, 233--248.

\bibitem{dolnikov1988}
Vladimir Dol'nikov, \emph{A certain combinatorial inequality}, Siberian
  Mathematical Journal \textbf{29} (1988), no.~3, 375--379.

\bibitem{engstrom2011}
Alexander Engstr{\"o}m, \emph{{A local criterion for Tverberg graphs}},
  Combinatorica \textbf{31} (2011), no.~3, 321--332.

\bibitem{erdos1973}
Paul Erd{\H o}s, \emph{Problems and results in combinatorial analysis}, Colloq.
  Internat. Theor. Combin. Rome, 1973, pp.~3--17.

\bibitem{frick2015}
Florian Frick, \emph{{Counterexamples to the topological Tverberg conjecture}},
  Oberwolfach Reports \textbf{12} (2015), no.~1, 318--321.

\bibitem{frick2017}
Florian Frick, \emph{Intersection patterns of finite sets and of convex sets}, Proc.
  Amer. Math. Soc. \textbf{145} (2017), no.~7, 2827--2842.

\bibitem{hell2008}
Stephan Hell, \emph{Tverberg's theorem with constraints}, J. Combin. Theory,
  Ser. A \textbf{115} (2008), no.~8, 1402--1416.

\bibitem{iriye2013}
Kouyemon Iriye and Daisuke Kishimoto, \emph{Hom complexes and hypergraph
  colorings}, Topology Appl. \textbf{160} (2013), no.~12, 1333--1344.

\bibitem{jonsson2012}
Jakob Jonsson, \emph{On the chromatic number of generalized stable {K}neser
  graphs}, 2012, manuscript, available at
  https://people.kth.se/~jakobj/doc/submitted/stablekneser.pdf.

\bibitem{kneser1955}
Martin Kneser, \emph{Aufgabe 360}, Jahresber. Dtsch. Math.-Ver. \textbf{2} (1955), 27.

\bibitem{kriz1992}
Igor K{\v r}{\' i}{\v z}, \emph{Equivariant cohomology and lower bounds for
  chromatic numbers}, Trans. Amer. Math. Soc. \textbf{33} (1992), 567--577.

\bibitem{kriz2000c}
Igor K{\v r}{\' i}{\v z}, \emph{{A correction to ``Equivariant cohomology and lower bounds for
  chromatic numbers''}}, Trans. Amer. Math. Soc. \textbf{352} (2000), no.~4,
  1951--1952.

\bibitem{lange2007}
Carsten Lange and G{\"u}nter~M. Ziegler, \emph{{On generalized Kneser
  hypergraph colorings}}, J. Combin. Theory, Ser.~A \textbf{114} (2007), no.~1,
  159--166.

\bibitem{lovasz1978}
L{\'a}szl{\'o} Lov{\'a}sz, \emph{{Kneser's conjecture, chromatic number, and
  homotopy}}, J. Combin. Theory, Ser. A \textbf{25} (1978), no.~3, 319--324.

\bibitem{mabillard2015}
Isaac Mabillard and Uli Wagner, \emph{{Eliminating Higher-Multiplicity
  Intersections, I. A Whitney Trick for Tverberg-Type Problems}}, arXiv
  preprint arXiv:1508.02349 (2015).

\bibitem{matousek2008}
Ji{\v{r}}{\'\i} Matou{\v{s}}ek, \emph{{Using the {Borsuk--Ulam} Theorem.
  {L}ectures on Topological Methods in Combinatorics and Geometry}}, second
  ed., Universitext, Springer-Verlag, Heidelberg, 2008.

\bibitem{matousek2002}
Ji{\v{r}}{\'\i} Matousek and G{\"u}nter~M. Ziegler, \emph{Topological lower bounds for the
  chromatic number: A hierarchy}, Jahresber. Dtsch. Math.-Ver. \textbf{106}
  (2004), 71--90.

\bibitem{meunier2011}
Fr{\'e}d{\'e}ric Meunier, \emph{{The chromatic number of almost stable Kneser
  hypergraphs}}, J. Combin. Theory, Ser.~A \textbf{118} (2011), no.~6,
  1820--1828.

\bibitem{ozaydin1987}
Murad {\"O}zaydin, \emph{Equivariant maps for the symmetric group}, Preprint,
  17 pages, \url{http://digital.library.wisc.edu/1793/63829}, 1987.

\bibitem{perles2014}
Micha~A. Perles and Moriah Sigron, \emph{Tverberg partitions of points on the moment curve}, Discrete
Comput. Geom. \textbf{57} (2017), no.~1, 56--70.

\bibitem{sarkaria1990}
Karanbir~S. Sarkaria, \emph{{A generalized Kneser conjecture}}, J. Combin.
  Theory, Ser. B \textbf{49} (1990), no.~2, 236--240.

\bibitem{sarkaria1991}
Karanbir~S. Sarkaria, \emph{A generalized {van Kampen--Flores} theorem}, Proc.
  Amer. Math. Soc. \textbf{11} (1991), 559--565.

\bibitem{sarkaria2000}
Karanbir~S. Sarkaria, \emph{{Tverberg partitions and Borsuk--Ulam theorems}},
  Pacific J. Math. \textbf{196} (2000), no.~1, 231--241.

\bibitem{schrijver1978}
Alexander Schrijver, \emph{{Vertex-critical subgraphs of Kneser-graphs}}, Nieuw
  Archief voor Wiskunde \textbf{26} (1978), 454--461.

\bibitem{volovikov1996-2}
Aleksei~Yu. Volovikov, \emph{{On a topological generalization of the Tverberg
  theorem}}, Math. Notes \textbf{59} (1996), no.~3, 324--326.

\bibitem{vucic1993}
Aleksandar Vu{\v{c}}i{\'c} and Rade~T. {\v{Z}}ivaljevi{\'c}, \emph{Note on a
  conjecture of {S}ierksma}, Discrete Comput. Geom. \textbf{9} (1993), no.~1,
  339--349.

\bibitem{ziegler2002}
G{\"u}nter~M. Ziegler, \emph{{Generalized Kneser coloring theorems with
  combinatorial proofs}}, Invent. Math. \textbf{147} (2002), no.~3, 671--691.

\end{thebibliography}


\end{document}